\theoremstyle{plain}
\newtheorem{Thm}{Theorem}[section]
\newtheorem{Cor}[Thm]{Corollary}
\newtheorem{Lem}[Thm]{Lemma}
\newtheorem{Prop}[Thm]{Proposition}
\theoremstyle{definition}
\newtheorem{Def}[Thm]{Definition}
\newtheorem{remark}[Thm]{Remark}
\theoremstyle{remark}
\numberwithin{equation}{section}
\newcommand{\tp}{\text{P}}
\newcommand{\tq}{\text{Q}}
\newcommand{\dom}[1]{\text{Dom}(#1)}
\newcommand{\lsin}{L^2_{\sin \theta}(0,\pi)}
\newcommand{\lw}{L^2_{w}(a,b)}
\newcommand{\dsa}{D_\text{S.A.}}
\newcommand{\dmax}{D_\text{max}}
\date{\today}
\title{On the Convergence of Non-Integer \\ Linear Hopf Flow}
\author{Brendan Guilfoyle}
\address{Brendan Guilfoyle\\
          School of STEM\\
          Munster Technological University, Kerry\\
          Tralee\\
          Co. Kerry\\
          Ireland.}
\email{brendan.guilfoyle@mtu.ie}
\author{Morgan Robson}
\address{Morgan Robson\\
          School of STEM\\
          Munster Technological University, Kerry\\
          Tralee\\
          Co. Kerry\\
          Ireland.}
\email{morgan.robson@hotmail.com}
\begin{document}
\begin{abstract}
    The evolution of a rotationally symmetric surface by a linear combination of its radii of curvature equation is considered. It is known that if the coefficients form certain integer ratios the flow is smooth and can be integrated explicitly. 
    In this paper the non-integer case is considered for certain values of the coefficients and with mild analytic restrictions on the initial surface. 
    
    We prove that if the focal points at the north and south poles on the initial surface coincide, the flow converges to a round sphere. Otherwise the flow converges to a non-round Hopf sphere. Conditions on the fall-off of the astigmatism at the poles of the initial surface are also given that ensure the convergence of the flow.
    
    The proof uses the spectral theory of singular Sturm-Liouville operators to construct an eigenbasis for an appropriate space in which the evolution is shown to converge.

\end{abstract}
\subjclass[2020]{Primary 35K10; Secondary 53A05}

\keywords{Hopf Sphere, Parabolic Equations, Singular Sturm-Liouville Operator, Curvature Flow}

\maketitle
\section{Introduction and Results} \label{ch: Introduction}

Given a $C^2$-smooth surface $\mathcal{S}_0\subset \mathbb{R}^3$, consider the curvature flow problem of finding $\vec{X}:S^2\times [0,\infty)\to\mathbb{R}^3$
\begin{align}\label{eq: curvature flow equation}
    &\left(\pdv{\vec{X}}{t}\right)^\perp=(ar_1+br_2+c)\widehat{n},
    & {\mbox{ with }}
    &\vec{X}(S^2,0)=\mathcal{S}_0,
\end{align}
where $a,b,c\in\mathbb{R}$, $\vec{X}$ and $\hat{n}$ are the position and normal vectors of the evolving surface, and $r_1,r_2$ are the radii of curvature of the surface. In this paper we investigate the time evolution of a strictly convex topological 2-sphere with an axis of rotational symmetry under the above \textit{linear Hopf flow} \cite{gk21}. The flow is parabolic when $b>0$.

The literature on extrinsic curvature flows primarily concerns flows whose normal speed is a symmetric function in the radii of curvature \cite{andrews04}, for which flows by mean curvature, inverse mean curvature and powers of Gauss curvature are examples \cite{Brendle17} \cite{Daskalopoulos17} \cite{White19}. Symmetry is necessary for the normal speed to be well defined on general surfaces \cite{andrews20} as the radii of curvature may be exchanged via re-paramterisation.

The current work differs in that linear Hopf flow is not a symmetric curvature flow. Rotationally symmetric surfaces support a canonical labelling of the radii of curvature associated to the meridian and profile principal foliations of the surface, therefore are good surfaces with which to explore asymmetric curvature flows. Furthermore, rotational symmetry reduces the problem to one spatial variable. This extra tractability allows for a wider class of curvature flows to be considered e.g. \cite{Rengaswami21} \cite{Santaella20}. 




In \cite{gk21} the linear Hopf flow was completely solved when the \textit{flow slope} $-a/b$ takes one of the values
\begin{equation}\label{eq: quantised slope equ}
    -a/b=2n+3,
\end{equation}
with $n \in \mathbb{N}$. It was proven that the fate of an initial smooth sphere is entirely determined by the local geometry of its isolated umbilic points, in particular the order of vanishing of the difference between the radii of curvature at the poles: $s=r_2-r_1$. 

In what follows the focus is on \textit{non-integer} linear Hopf flow, i.e. the case in which $n$ as appears in equation (\ref{eq: quantised slope equ}) is non integer. In contrast to the integer case, non-round, smooth stationary solutions to the non-integer linear Hopf flow do not exist, and so the analysis is significantly complicated. 

To state the main results, let $\theta$ be the angle between the normal of $\mathcal{S}$ at a point and the axis of rotational symmetry and denote the second order Legendre differential operator for $\mu,\nu \in \mathbb{R}$ by $\mathscr{L}^\mu_\nu$ (given by equation (\ref{e:legop}) below). Let $\lsin$ be the space of square integrable functions with weight $\sin \theta$ on the interval $(0,\pi)$. 

\vspace{0.1in}
\begin{Thm}\label{thm: main thm 1}
Consider the linear Hopf flow (\ref{eq: curvature flow equation}) and (\ref{eq: quantised slope equ}) for $n\in (-1,1)$. Let $\mathcal{S}_0$ be a $C^4$-smooth strictly convex rotationally symmetric initial sphere. Assume that $\mathcal{S}_0$ satisfies the following conditions\\

\begin{enumerate}[label={\upshape(\arabic*)}]
    \item $\mathscr{L}^n_n\left(\frac{\displaystyle{s_0(\theta)}}{\displaystyle{\sin^{n+2}\theta}}\right)\in \lsin$,\\
    \item $n\cdot\lim\limits_{\theta \to 0} \left(\frac{\displaystyle{s_0(\theta)}}{\displaystyle{\sin^{2}\theta}}\right)=n\cdot\lim\limits_{\theta \to \pi} \left(\frac{\displaystyle{s_0(\theta)}}{\displaystyle{\sin^{2}\theta}}\right)=0$.
\end{enumerate}\vspace{0.3cm}

If the focal points of $\mathcal{S}_0$ at the north and south poles coincide, the flow converges to a round sphere of radius $\textstyle{-\frac{c}{a+b}}$.

Otherwise the flow converges to a non-round Hopf sphere with astigmatism at the equator given by the signed distance between the focal points at the poles of $S_0$.
\end{Thm}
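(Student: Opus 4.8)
The plan is to reduce the geometric flow to a linear parabolic PDE for the astigmatism $s=r_2-r_1$, diagonalise the spatial operator using singular Sturm–Liouville theory, and read off convergence mode-by-mode. First I would record the support function / radii-of-curvature description of a rotationally symmetric convex sphere in terms of the angle $\theta$, express $r_1,r_2$ in terms of the support function $h(\theta,t)$, and substitute into \eqref{eq: curvature flow equation}. Because $a,b,c$ are constant and $r_1,r_2$ are linear in $h$ and its derivatives, the evolution equation for $h$ is linear and inhomogeneous; subtracting the (unique) round stationary solution of radius $-c/(a+b)$ turns it into a homogeneous linear equation. Differentiating appropriately, the quantity $u(\theta,t):=s(\theta,t)/\sin^{n+2}\theta$ should satisfy $\partial_t u = -b\,\mathscr{L}^n_n u$ (up to a positive constant depending on the normalisation of time), which is exactly why hypothesis (1) is phrased in terms of $\mathscr{L}^n_n$ acting on this combination; here the choice of slope $-a/b=2n+3$ is what makes the associated Legendre parameters collapse to $\mu=\nu=n$.

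Next I would set up the spectral theory. On $(0,\pi)$ with weight $\sin\theta$, the operator $\mathscr{L}^n_n$ is a singular Sturm–Liouville operator; using the limit-point/limit-circle analysis one identifies the self-adjoint realisation on $\lsin$ whose domain is pinned down by the boundary behaviour in hypothesis (2) (the factor $n$ in (2) reflecting that for $n=0$ the endpoints are limit-circle and a boundary condition must be imposed, whereas for $n\ne0$ in the stated range the relevant endpoint condition is automatic or forced). The point spectrum consists of eigenvalues $\lambda_k\ge 0$ with $\lambda_0=0$ only in the limit-circle/$n=0$-type situation or, more relevantly, with a one-dimensional kernel coming from the associated Legendre function that is the image of the constant-astigmatism Hopf sphere; all other eigenvalues are strictly positive and bounded below away from $0$. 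With a complete orthonormal eigenbasis $\{\phi_k\}$ in hand, expand $u_0=\sum c_k\phi_k$ (hypothesis (1) guarantees $u_0$ is in the form domain, so the series and its image under $\mathscr{L}^n_n$ converge in $\lsin$), and then $u(\cdot,t)=\sum c_k e^{-b\lambda_k t}\phi_k$.

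From this expansion the dichotomy is immediate: every mode with $\lambda_k>0$ decays exponentially, so $u(\cdot,t)\to c_{k_\ast}\phi_{k_\ast}$ where $\phi_{k_\ast}$ spans the kernel. If the focal points at the two poles of $\mathcal S_0$ coincide then the relevant projection $c_{k_\ast}$ vanishes — this is precisely what the coincidence of focal points encodes, since the kernel eigenfunction has nonzero "dipole" asymptotics at the poles measuring the focal-point separation — and hence $u\to 0$, i.e. $s\to 0$, forcing the limit to be umbilic everywhere, hence a round sphere; integrating $h$ back, its radius is the stationary value $-c/(a+b)$. If the focal points differ, $c_{k_\ast}\ne 0$ and the limit is the Hopf sphere corresponding to $c_{k_\ast}\phi_{k_\ast}$, whose astigmatism at the equator is computed by evaluating that eigenfunction at $\theta=\pi/2$; matching constants shows this equals the signed distance between the focal points at the poles of $\mathcal S_0$. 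Throughout one must also upgrade $L^2$-convergence to convergence of the surfaces in a $C^k$ sense (e.g. via parabolic smoothing estimates or by noting the decaying tail is controlled in stronger norms once $t>0$), and verify that strict convexity is preserved along the flow so that the support-function description remains valid; I expect the main obstacle to be this last point together with the careful identification of the correct self-adjoint extension and its kernel at the threshold parameter values, i.e. making the singular endpoint analysis and the geometric boundary conditions (2) line up exactly.
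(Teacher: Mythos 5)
Your overall architecture is exactly the paper's: pass to $u=s/\sin^{n+2}\theta$, which satisfies a linear heat-type equation driven by $\mathscr{L}^n_n$; realise $\mathscr{L}^n_n$ as a self-adjoint singular Sturm--Liouville operator on $\lsin$; expand in its eigenbasis; observe that all modes decay exponentially except the kernel mode, which is the Hopf-sphere astigmatism; and identify the kernel coefficient with the signed distance between the focal points. The one substantive error is your parenthetical claim that for $n\neq 0$ the endpoint condition is ``automatic or forced'', with only the $n=0$ case being limit-circle. In fact the restriction $n\in(-1,1)$ is there precisely because $\mathscr{L}^n_n$ is limit-circle at \emph{both} endpoints for \emph{every} such $n$ (the solutions $\cot^{\pm n}(\theta/2)$ of $\mathscr{L}^n_n u=-n(n+1)u$ are square integrable against $\sin\theta$ exactly when $|n|<1$). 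Consequently at each endpoint there is a genuine one-parameter family of admissible separated boundary conditions, hence a family of self-adjoint extensions, and the proof must \emph{choose} the extension whose domain contains the stationary solution $\sin^n\theta=s_{\text{Hopf}}/\sin^{n+2}\theta$; this forces $\lim_{\theta\to 0,\pi}\sin^{2n+1}\theta\,\frac{d}{d\theta}\left(u/\sin^n\theta\right)=0$, i.e.\ condition (II) of Theorem \ref{thm: main thm 2}, and the factor $n$ in hypothesis (2) then arises from rewriting this Lagrange-bracket condition via L'H\^opital under the $C^4$ assumption, not from an LC/LP dichotomy. Had you taken a different extension, the resulting eigenbasis would consist of the wrong Legendre functions (e.g.\ $\tp^{n}_{\nu}$ or $\tq_{\nu}$ rather than $\tp^{-n}_{n+m}$), the kernel would not contain the Hopf astigmatism, and the convergence statement would fail; so this step cannot be waved through.

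Two smaller points. The identification of the kernel coefficient with $f_0-f_\pi$ is not read off from ``dipole asymptotics'' of the eigenfunction but from integrating $s/\sin\theta$ over $(0,\pi)$, killing the higher modes by orthogonality, and evaluating the remaining integral via the Codazzi--Mainardi relation and the focal-point formulas; it is short but it is a computation you must actually do. And the paper does not upgrade $L^2$-convergence by parabolic smoothing: it obtains a.e.\ pointwise convergence of the eigenfunction series directly (Rademacher--Menchov), then recovers $r_1$ and $r$ pointwise by quadrature, so the preservation-of-convexity issue you flag at the end never has to be confronted in the form you anticipate.
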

\vspace{0.1in}

 If $\mathcal{S}_0$ has isolated umbilic points at the north or south pole, a sufficient condition to imply (2) can be given in terms of the surfaces \textit{umbilic slopes}, which quantify the rate at which $\mathcal{S}_0$ becomes umbilic (see Section \ref{ch: Rotationally Symmetric Surfaces} for details).

\begin{enumerate}[label={\upshape($2^*$)}]
\item At each pole of $\mathcal{S}_0$, the umbilic slope is greater than $3$.
\end{enumerate}

This is a corollary of Proposition \ref{prop: slope and order relation}. It is not immediately obvious which classes of surfaces satisfy condition (1). However a sufficient condition can be given in terms of the asymptotic fall-off of $s$ and its derivatives as $\theta \to 0,\pi$.
\begin{enumerate}[label={\upshape(3)}]
    \item $s_0^{(i)} \sim c_i\sin^{m-i} \theta$ at $\theta=0,\pi$ for $m>n+3$, $i=0,1,2$  with possibly different constants $c_i$ at $\theta=0$ and $\theta=\pi$.
\end{enumerate}
Condition (3) is in fact enough for convergence as it implies both conditions (1) and (2). Theorem \ref{thm: main thm 1} will be a consequence of the following more technical theorem.

\vspace{0.1in}
\begin{Thm}\label{thm: main thm 2}
 Let $S_0$ be a $C^2$-smooth strictly convex rotationally symmetric 2-sphere. If the astigmatism $s$ of $\mathcal{S}_0$ satisfies both of the following conditions\\
 
 \begin{enumerate}[label={\upshape(\Roman*)}]
    \item $\displaystyle{\frac{s}{\sin^{n+2}\theta} \in \lsin, \enskip \dv{s}{\theta}\in \text{\upshape{AC}}_{\text{\upshape{loc}}}(0,\pi), \enskip \mathscr{L}^n_n \left(\frac{s}{\sin^{n+2}\theta}\right)\in \lsin}$,\\
    \item $\displaystyle{\lim\limits_{\theta \to 0,\pi}\left( \sin^{2n+1}\theta \dv[]{}{\theta}\left(\frac{\displaystyle{s}}{\sin^{2n+2} \theta}\right)\right)=0}$,
\end{enumerate}\vspace{0.3cm}
then the conclusion of Theorem \ref{thm: main thm 1} holds.
\end{Thm}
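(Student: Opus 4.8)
The plan is to reduce the flow to a linear parabolic PDE in one spatial variable for the rescaled astigmatism, diagonalise the spatial operator using the spectral theory of a singular Sturm--Liouville problem, and then read off convergence mode-by-mode. First I would recall from the rotationally-symmetric setup that, writing $u(\theta,t) = s(\theta,t)/\sin^{n+2}\theta$, the support function / radii-of-curvature description of \eqref{eq: curvature flow equation} together with the slope condition \eqref{eq: quantised slope equ} turns the evolution of the astigmatism into an equation of the form $\partial_t u = b\,\mathscr{L}^n_n u$ (up to an overall positive constant absorbed into $t$, using $b>0$ for parabolicity), while the ``mean'' part of the support function decouples and drives the radius towards $-c/(a+b)$; the focal-point data at the poles enter as the boundary behaviour of $u$. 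The first real step is therefore to justify this reduction carefully and to identify the correct Hilbert space: $\lsin$ is exactly the space in which the Legendre-type operator $\mathscr{L}^n_n$ (from \eqref{e:legop}) is symmetric with weight $\sin\theta$, and conditions (I)--(II) are precisely the statements that $s_0/\sin^{n+2}\theta$ lies in the domain of the relevant self-adjoint realisation: (I) gives membership in the maximal domain together with the local absolute continuity needed for the Sturm--Liouville machinery, and (II) is the vanishing of the boundary form $\bigl[\sin^{2n+1}\theta\,\partial_\theta(\cdot)\bigr]$ at $\theta=0,\pi$ that selects the distinguished (Friedrichs-type) self-adjoint extension $\dsa$.

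Next I would invoke the singular Sturm--Liouville theory for $\mathscr{L}^n_n$ on $(0,\pi)$ with weight $\sin\theta$: both endpoints are singular, and one must classify them as limit-point or limit-circle for the relevant parameter range $n\in(-1,1)$. I expect both endpoints to be in the limit-circle, non-oscillatory case for these $n$, so that boundary conditions are needed at each end; condition (II) supplies them and yields a self-adjoint operator $\dsa$ with compact resolvent, hence a discrete spectrum $0 \le \lambda_0 < \lambda_1 < \cdots \to \infty$ and a complete orthonormal eigenbasis $\{\phi_k\}$ of $\lsin$. The eigenfunctions will be (associated) Legendre functions $P^n_\nu$ for the appropriate degrees $\nu$; the key qualitative point is that the lowest eigenvalue is $\lambda_0 = 0$, with one-dimensional eigenspace spanned by an explicit nonnegative function, and all other eigenvalues are strictly positive. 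The zero mode is exactly the Hopf-sphere direction: its coefficient is a conserved quantity of the flow and equals (a multiple of) the signed distance between the focal points at the poles of $\mathcal S_0$.

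With the eigenbasis in hand, the convergence argument is the standard one for linear parabolic equations: expand $u(\cdot,0)=\sum_k a_k\phi_k$ (legitimate since (I) puts $u_0$ in the domain, a fortiori in $\lsin$), so that $u(\cdot,t)=\sum_k a_k e^{-b\lambda_k t}\phi_k$, and estimate $\|u(\cdot,t)-a_0\phi_0\|_{\lsin}^2 = \sum_{k\ge1} a_k^2 e^{-2b\lambda_k t} \le e^{-2b\lambda_1 t}\|u_0\|^2 \to 0$, with exponential rate $\lambda_1>0$; because $u_0$ lies in the operator domain one also gets convergence in a stronger (graph-norm, hence pointwise via Sobolev embedding in one variable) sense, enough to conclude convergence of the surfaces. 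If the focal points coincide then $a_0=0$ and $u\to 0$, i.e. $s\to 0$ and the surface becomes totally umbilic, hence a round sphere; combined with the decoupled mean equation the radius is $-c/(a+b)$. If $a_0\neq0$ the limit is the Hopf sphere whose astigmatism is $a_0\phi_0\sin^{n+2}\theta$, and evaluating at the equator gives the stated value. I would finally note that Theorem~\ref{thm: main thm 1} follows because its hypotheses (1)--(2), and a fortiori (3) or ($2^*$) via Proposition~\ref{prop: slope and order relation}, imply (I)--(II): the $C^4$ and strict-convexity assumptions give enough regularity for $s_0/\sin^{n+2}\theta$ to be in the maximal domain with $\mathscr{L}^n_n$ of it in $\lsin$, and the pole conditions (2) force the boundary form (II) to vanish.

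The main obstacle I anticipate is the singular endpoint analysis: verifying that conditions (I)--(II) genuinely land $u_0$ in the domain of a self-adjoint extension with compact resolvent, correctly identifying which extension (the boundary form in (II) must match the one making $\dsa$ self-adjoint), and controlling the behaviour of the Legendre eigenfunctions near $\theta=0,\pi$ for non-integer degree—in particular proving $\lambda_0=0$ is simple and that there is a spectral gap. Converting $\lsin$-convergence of the rescaled astigmatism back into geometric convergence of the embedded surfaces (uniform/$C^k$ convergence of support functions, preservation of convexity along the flow) is a second technical point, though more routine given the one spatial dimension.
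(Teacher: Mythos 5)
Your proposal follows essentially the same route as the paper: rescaling to $u=s/\sin^{n+2}\theta$, identifying (I)--(II) as membership in the maximal domain plus the separated boundary condition selecting the self-adjoint extension containing the Hopf direction $\sin^n\theta$, invoking the limit-circle classification and compact resolvent to get a Legendre eigenbasis with zero mode equal to the focal-point distance, and concluding by exponential decay of the higher modes. The only real deviations are cosmetic (the paper pins the eigenfunctions down as $\tp^{-n}_{n+m}(\cos\theta)$ and upgrades $L^2$-convergence to a.e.\ pointwise convergence via Rademacher--Menchov and term-by-term integration rather than a Sobolev embedding, which is delicate here because of the degenerate weight at the poles).
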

\vspace{0.1in}

Theorem \ref{thm: main thm 2} has a weaker differentiability requirement of the initial surface than Theorem \ref{thm: main thm 1}. Indeed, (I) implies $\mathcal{S}_0$ is $C^3$-smooth everywhere apart from possibly at the north and south umbilic points, and $C^4$-smooth almost everywhere. Under the additional assumption that $\mathcal{S}_0$ is $C^4$-smooth, conditions (I) and (II) are implied by (1) and (2) respectively and Theorem \ref{thm: main thm 1} follows.

Our method of proof involves a spectral expansion in an eigenbasis of the non-integer Legendre operator, defined on the weighted Hilbert space $\lsin$. The eigenbasis is adapted to contain the stationary solutions of the linear Hopf flow. The restriction $n\in (-1,1)$, is needed to guarantee such an eigenbasis expansion is possible.

Section \ref{ch: Rotationally Symmetric Surfaces} fixes the notation used to describe geometrical quantities and derives some consequences of rotationally symmetry. Section \ref{ch: Linear Hopf Flow} discusses the geometrical properties of possible stationary solutions of the linear Hopf flow and describes the evolution of important geometric quantities under the flow. Critical to this description is the second order Legendre differential operator ${\mathscr L}_n^n$. 

Section \ref{ch: Singular Sturm-Liouville Operators} reviews the theory of singular Sturm-Liouville operators, which we require to prove the existence of the adapted eigenbasis. In particular the LC property of a singular Sturm-Liouville operator and boundary conditions required to generate possible self-adjoint domains are discussed. An application of the Spectral Theorem \ref{thm: spectral theorem} for LC operators then guarantees the existence of a complete orthonormal basis of eigenfunctions of such operators.

This is then applied in Section \ref{ch: Proof of The Main Theorem} where the main results are proven. This done by showing that ${\mathscr L}_n^n$ is LC for $n\in(-1,1)$ and by finding boundary conditions (namely conditions (I) and (II) of Theorem \ref{thm: main thm 2}) to define a self-adjoint domain for $\mathscr{L}^n_n$, which is appropriate for the flow.

The eigenbasis is then given explicitly in terms of Legendre functions and the various geometric quantities are similarly expressed. Finally the evolution is solved in terms of the eigenbasis as an expansion which decays exponentially in time to the stationary solution of the flow.

\vspace{0.1in}

\section{Rotationally Symmetric Surfaces} \label{ch: Rotationally Symmetric Surfaces}

The class of surfaces we concern ourselves within this work are elements of the set\vspace{0.3cm}
\[
\mathscr{W}=\left\{S\subset \mathbb{R}^3 : \parbox{23em}{$S$ is an embedded $C^2$-smooth topological 2-sphere which is rotationally symmetric and strictly convex.}\right\}.
\]
The terminology ``topological sphere'' which we will abbreviate as just ``sphere'' is taken to mean a closed surface of genus 0.
 
A given $\mathcal{S}\in \mathscr{W}$ will be orientated with outwards pointing normal $\hat{n}$ in a right handed coordinate system $\vec{X}=(x^1,x^2,x^3)$. Align the $x^3$-axis with the axis of rotational symmetry of $\mathcal{S}$. $\mathcal{S}$ is parametrized by pushing forward the standard polar coordinates $(\theta,\phi)$ of $S^2$ onto $\mathcal{S}$ by the inverse of the Guass map $\mathcal{N}^{-1}:S^2\to \mathcal{S}$. Hence $\theta\in[0,\pi]$ measures the angle made between the normal vector $\hat{n}$ of $\mathcal{S}$, and the axis of rotational symmetry whereas $\phi\in [0,2\pi]$ measures the angle made by a clockwise rotation from the $x^2x^3$-plane.

As a consequence of rotational symmetry many quantities $f:\mathcal{S}\to \mathbb{R}$ are independent of $\phi$, in which case we write for short-hand $f(\theta)$ instead of $f(\vec{X}(\theta,\phi))$. In particular the radii of curvature of $\mathcal{S}$, $r_1$ and $r_2$, associated to the meridian and profile principal foliations respectively, are functions of $\theta$ only. Of critical importance to our study is the astigmatism of $\mathcal{S}$ given by
\[
s=r_2-r_1.
\]
If $\mathcal{S}$ is assumed strictly convex then the radii are $C^{m}$-smooth functions of the parameter $\theta$ whenever $\mathcal{S}$ is $C^{m+2}$-smooth. Furthermore $s$ vanishes at, and only at, the umbilic points of $\mathcal{S}$. In particular $s(0)=s(\pi)=0$ and $s \equiv 0$ if and only if $\mathcal{S}$ is a round sphere. 

Our description of $\mathcal{S}$ in $\mathbb{R}^3$ will be facilitated by the support function $r:\mathcal{S}\to \mathbb{R}$ defined by $r=\vec{X}\cdot \hat{n}$. In the present setting $r$ is a function of $\theta$ only and $\vec{X}(\theta,\phi)$ can be recovered from $r$ and its derivatives via:
\begin{align*}
   &x^1+ix^2=(\sin \phi+i\cos \phi)\left(r\sin\theta  + \dv{r}{\theta}\cos\theta\right),
   &x^3=r\cos\theta -\dv{r}{\theta}\sin\theta.
\end{align*}
\begin{remark}\label{rem: focal point positions}
For later convenience, note that the focal points \cite{gk08} of $\mathcal{S}$ at $\theta=0$ and $\pi$ lie on the axis of rotational symmetry with $x^3$ coordinates 
\begin{align*}
    &f_0=r(0)-r_1(0), &f_\pi=-r(\pi)+r_1(\pi).
\end{align*}
This can be deduced by the above equation for $\vec{X}$ and the equations of the focal sheets $\vec{F}_i=\vec{X}-r_i \widehat{n}$, $i=1,2$.
\end{remark}
\vspace{0.1in}

We collect together some useful relationships between the above quantities on rotationally symmetric surfaces.
\begin{Prop} The following relationships hold
\begin{align}\label{eq: curvatures in terms of r}
    &r_1=\frac{\cos^2\theta}{\sin\theta}\dv[]{}{\theta}\left(\frac{r}{\cos\theta}\right), &r_2=\dv[2]{r}{\theta}+r,
\end{align}
\begin{equation}\label{eq: support function from quadratures}
r=C_2\cos\theta+C_1+\int \sin\theta \left[\int \frac{s}{\sin\theta}d \theta\right]d\theta,
\end{equation}
for constants $C_1$,$C_2$. If in addition surface is $C^3$, the derived Codazzi-Mainardi equation holds:
\begin{align}
    \label{eq: codazzi mainardi}
    \dv{r_1}{\theta}=(r_2-r_1)\cot\theta. 
\end{align}
\end{Prop}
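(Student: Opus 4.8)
The three identities are essentially the classical support-function formulae for a convex surface, specialised to rotational symmetry, so the plan is to verify each directly from the parametrisation of $\vec{X}$ in terms of $r$. For \eqref{eq: curvatures in terms of r}, I would start from the fact that on a convex surface parametrised by the inverse Gauss map, the second fundamental form and the metric are both expressible through $r$ and its Hessian; concretely, the principal radii are the eigenvalues of the symmetric operator $\nabla^2 r + r\cdot\mathrm{Id}$ on $S^2$. In rotationally symmetric coordinates $(\theta,\phi)$ this operator is diagonal, with the $\phi\phi$-entry producing $r_2 = \ddot r + r$ immediately, and the $\theta\theta$-entry producing $r_1$. To get the stated closed form for $r_1$, I would differentiate the expression $\frac{r}{\cos\theta}$, multiply by $\frac{\cos^2\theta}{\sin\theta}$, and check it equals $r - \dot r\tan\theta \cdot(\text{something})$; alternatively, and more cleanly, one can obtain $r_1$ geometrically from $x^3 = r\cos\theta - \dot r\sin\theta$ and the focal-sheet equation $\vec F_1 = \vec X - r_1\hat n$, using that $\vec F_1$ lies along the axis so its derivative in $\theta$ is parallel to $\hat n' $; this yields a first-order ODE that integrates to the displayed formula.

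For \eqref{eq: support function from quadratures}, the idea is to treat $s = r_2 - r_1$ as a known function and solve the resulting second-order linear ODE for $r$. Subtracting the two expressions in \eqref{eq: curvatures in terms of r} gives $s = r_2 - r_1$ as a second-order differential expression in $r$; the key algebraic observation is that this expression factors as $\frac{1}{\sin\theta}\,\dv{}{\theta}\!\left(\sin\theta\,\dv{}{\theta}\Big(\text{(first integral)}\Big)\right)$ up to the homogeneous solutions. The homogeneous equation $r_2 - r_1 = 0$ has the two solutions $r = \cos\theta$ and $r = 1$ (equivalently, the round spheres centred on the axis and at the origin), which supply the $C_2\cos\theta + C_1$ terms; the particular solution is then recovered by two successive quadratures $\int\sin\theta\big[\int \frac{s}{\sin\theta}\,d\theta\big]\,d\theta$. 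I would verify this by differentiating the claimed $r$ twice and plugging back into $r_2 - r_1 = s$.

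Finally, \eqref{eq: codazzi mainardi} should follow by differentiating the formula for $r_1$ in \eqref{eq: curvatures in terms of r} (legitimate once $\mathcal S$ is $C^3$, so that $r$ is $C^3$ and $r_1$ is $C^1$), and simplifying using the expression for $r_2$; one expects the $\dv{r}{\theta}$ and $\dv[2]{r}{\theta}$ terms to recombine into $(r_2 - r_1)\cot\theta$. Alternatively it is the rotationally symmetric reduction of the standard Codazzi equation $\nabla_1 h_{22} = \nabla_2 h_{12}$, which in these coordinates collapses to exactly this relation.

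The main obstacle is bookkeeping rather than conceptual: getting the trigonometric manipulations in \eqref{eq: curvatures in terms of r} to land on precisely the stated form (in particular handling the $\cos\theta$ in the denominator and the apparent singularities at $\theta = \pi/2$ and at the poles, which must cancel since the left-hand sides are smooth), and correctly identifying the constants and limits of integration in \eqref{eq: support function from quadratures}. I would do the computation of $r_1$ first, since both \eqref{eq: support function from quadratures} and \eqref{eq: codazzi mainardi} are downstream of it.
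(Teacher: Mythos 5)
Your proposal is correct and takes essentially the same route as the paper, whose proof is a one-liner: equation \eqref{eq: curvatures in terms of r} is quoted from \cite{gk10} (or derived from the definition of the support function), and \eqref{eq: support function from quadratures} and \eqref{eq: codazzi mainardi} are obtained by integrating, respectively differentiating, \eqref{eq: curvatures in terms of r} with respect to $\theta$ --- exactly your plan. Two bookkeeping slips to fix before executing it: the labels are swapped (the $\theta\theta$-entry of $\nabla^2 r + r\,\mathrm{Id}$ gives $r_2=\dv[2]{r}{\theta}+r$, while the $\phi\phi$-entry gives $r_1=r+\cot\theta\,\dv{r}{\theta}$), and your proposed factorisation is in the wrong order: since $s=r_2-r_1=\dv[2]{r}{\theta}-\cot\theta\,\dv{r}{\theta}$, the correct identity is $s=\sin\theta\,\dv{}{\theta}\bigl(\frac{1}{\sin\theta}\dv{r}{\theta}\bigr)$, not $\frac{1}{\sin\theta}\dv{}{\theta}\bigl(\sin\theta\,\dv{}{\theta}(\cdot)\bigr)$, and it is the former that integrates directly to the double quadrature in \eqref{eq: support function from quadratures} with homogeneous solutions $1$ and $\cos\theta$ as you state. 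Your direct verification step would catch both of these, so the argument goes through.
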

\begin{proof}
The derivation of equation (\ref{eq: curvatures in terms of r}) can be found in \cite{gk10} or derived from the definition of the support function. Equations (\ref{eq: support function from quadratures}) and (\ref{eq: codazzi mainardi}) are derived by integrating or differentiating equation (\ref{eq: curvatures in terms of r}) respectively, with respect to $\theta$.
\end{proof}
\vspace{0.1in}

\begin{remark}\label{rem: par trans along normal lines}
Given $\mathcal{S}\in\mathscr{W}$ with support function $r$, the transformation $r\mapsto r+C_1$ translates $\mathcal{S}$ at each point along its normal line by a distance $C_1$, i.e $\mathcal{S}$ moves to a parallel surface. The transformation $r\mapsto r+C_2\cos\theta$ translates the entire surface a distance of $C_2$ along the $x^3$-axis. Therefore $s$ determines the oriented, affine normal lines of the surface. See \cite{gk04} for further details. 
\end{remark}
\vspace{0.1in}

\subsection{The Slope at an Isolated Umbilic}\hspace{0.1cm}\\

If $\mathcal{S}$ possesses isolated umbilic points at its north and south poles, i.e. at $\theta=0,\pi$, then we define the \textit{umbilic slopes} of $\mathcal{S}$ as
\begin{align*}
    &\mu_0=\lim_{\theta\to 0}\left(\frac{r_2(\theta)-r_2(0)}{r_1(\theta)-r_1(0)}\right), &\mu_\pi=\lim_{\theta\to \pi}\left(\frac{r_2(\theta)-r_2(\pi)}{r_1(\theta)-r_1(\pi)}\right).
\end{align*}
 If $\mathcal{S}$ is $C^3$-smooth, the umbilic slopes are just $\dv{r_2}{r_1}$ evaluated at $\theta=0$ and $\pi$ respectively.
 
 We remark that coordinate substitution $\theta \mapsto \pi-\theta$ corresponds to reversing the direction of the $x^3$ axis, transforming $\mu_0$ into $\mu_\pi$ and vice-versa. Hence arguments that are made concerning the umbilic at one pole will often hold at the other pole also. When this is the case we will simply say the argument follows by reflection.  \\
 
 The question of what values of umbilic slope are possible on various spheres is an area of active research \cite{Hopf1989a} \cite{Galvez22}. The derived Codazzi-Mainardi equation (\ref{eq: codazzi mainardi}) is a necessary integrability condition for a $C^3$-smooth surface to be rotationally symmetric and has some striking consequences in this direction, two of which are the following
 proposition and theorem.
\vspace{0.1in}

\begin{Prop}\label{prop: slope and order relation}
Let $\mathcal{S}\in \mathscr{W}$ be a $C^3$ sphere with an isolated umbilic at $\theta=0$. For any $\alpha \in \mathbb{R}$

\begin{enumerate}[label={\upshape(\arabic*)}]
    \item $\mu_0>\alpha+1 \implies \lim\limits_{\theta \to 0}\left(\frac{\displaystyle{s}}{\displaystyle{\sin^{\alpha}\theta}}\right)=0$,\\
    \item $\mu_0<\alpha+1 \implies \lim\limits_{\theta \to 0}\left(\frac{\displaystyle{s}}{\displaystyle{\sin^{\alpha}\theta}}\right)=\pm\infty$.
\end{enumerate}
If $\mathcal{S}$ has an isolated umbilic at $\theta=\pi$ then the value of $\mu_\pi$ dictates the behaviour of $s$ as $\theta\to\pi$ in the same way.
\end{Prop}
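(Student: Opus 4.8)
The plan is to work directly with the derived Codazzi--Mainardi equation \eqref{eq: codazzi mainardi}, which since $s=r_2-r_1$ reads $\dv{r_1}{\theta}=s\cot\theta$. First I would set up the local picture near $\theta=0$: write $u(\theta)=r_1(\theta)-r_1(0)$ and $v(\theta)=r_2(\theta)-r_2(0)$, both of which vanish at $\theta=0$ since the umbilic is isolated and $\mathcal{S}$ is $C^3$. The umbilic slope is $\mu_0=\lim_{\theta\to0} v/u$ (using that $u'(0)\neq0$ would be false at an umbilic, so one actually argues via l'Hôpital or the $C^3$ hypothesis that $\mu_0=v'(0)/u'(0)$ when $u'(0)\neq 0$, and more care is needed when $u'(0)=0$). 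Note $s=v-u$, so $s/u\to \mu_0-1$. The quantity we must control is $s/\sin^\alpha\theta$, and since $\sin\theta\sim\theta$ near $0$ it suffices to understand $s/\theta^\alpha$.

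The key step is to extract the leading-order behaviour of $u=r_1-r_1(0)$ from the ODE $u' = s\cot\theta$. Rewriting, $s = u'\tan\theta \sim u'\,\theta$ near $0$. Combined with $s/u\to\mu_0-1$ we get $u'\theta/u \to \mu_0-1$, i.e. $(\log u)' \sim (\mu_0-1)/\theta$, which integrates to $u(\theta)\sim c\,\theta^{\mu_0-1}$ for some constant $c\neq 0$ (this is the standard regularly-varying-function / Karamata-type argument, and one should phrase it carefully since $\mu_0$ need not be an integer and the asymptotic $s/u\to\mu_0-1$ is only a limit, not an equality — so the conclusion is $u(\theta)=\theta^{\mu_0-1}L(\theta)$ with $L$ slowly varying, hence $u(\theta)/\theta^{\mu_0-1-\varepsilon}\to 0$ and $u(\theta)/\theta^{\mu_0-1+\varepsilon}\to\infty$ for every $\varepsilon>0$). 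Consequently $s\sim u'\theta$ also behaves like $\theta^{\mu_0-1}$ up to slowly varying factors. Then: if $\mu_0>\alpha+1$, pick $\varepsilon>0$ with $\mu_0-1-\varepsilon>\alpha$, giving $s/\theta^\alpha \to 0$; if $\mu_0<\alpha+1$, pick $\varepsilon$ with $\mu_0-1+\varepsilon<\alpha$, giving $|s|/\theta^\alpha\to\infty$, and the sign is eventually constant because $s$, being $v-u$ with $u$ of one sign near $0$ (as $r_1$ is monotone near an isolated umbilic by \eqref{eq: codazzi mainardi}: $r_1'=s\cot\theta$ has a definite sign), does not oscillate — this justifies the ``$\pm\infty$'' rather than merely ``$\infty$''.

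The case $\theta\to\pi$ follows by reflection $\theta\mapsto\pi-\theta$ as noted in the text, interchanging the roles of the poles and of $\mu_0,\mu_\pi$.

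I expect the main obstacle to be the regular-variation argument: turning the \emph{limit} $s/(r_1-r_1(0))\to\mu_0-1$ into genuine two-sided power bounds on $r_1-r_1(0)$ and hence on $s$, without assuming a clean power-law expansion of the curvature radii (which is exactly what one cannot assume when $\mu_0\notin\mathbb{Z}$). The technical heart is an integral estimate: from $(\log|u|)'=\cot\theta - (\cot\theta - u'/u)$ and $u'\tan\theta/u\to\mu_0-1$ one integrates $\int_\theta^{\theta_0}$ and uses that the integrand is $(\mu_0-1+o(1))/\tau$ to pin down $\log|u(\theta)| = (\mu_0-1)\log\theta + o(\log\theta)$; care is needed that the $o(1)$ is uniform enough to integrate, which is where the hypothesis that the limit defining $\mu_0$ genuinely exists (as opposed to merely $\limsup/\liminf$) does the work. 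A secondary subtlety is the degenerate sub-case $u'(0)=0$ (equivalently $\mu_0>2$), where $r_1-r_1(0)$ itself vanishes to higher order; there the same ODE argument still applies since it only uses $u(\theta)\to0$ and the ratio limit, not the value of $u'(0)$, but this should be remarked on explicitly.
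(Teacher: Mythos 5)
Your proposal is at heart the paper's own argument in a lightly disguised form: both turn on integrating a logarithmic--derivative identity coming from the Codazzi--Mainardi equation against the non-integrable singularity of $\cot\theta$ at the pole, which is exactly what converts the mere existence of the limit defining $\mu_0$ into two-sided power bounds without assuming any power-law expansion (the "technical heart" you correctly identify). The difference is where the identity is applied. The paper works directly on $s$: from $\dv{r_1}{\theta}=s\cot\theta$ and $s=r_2-r_1$ one gets $\dv{r_2}{r_1}=1+\frac{\tan\theta}{s}\dv{s}{\theta}$, hence $\left(\log|s|\right)'=\cot\theta\left(\dv{r_2}{r_1}-1\right)$, and integrating from $\theta$ to $\delta$ gives the exact formula
\[
\left|\frac{s(\theta)}{\sin^\alpha\theta}\right|=\left|\frac{s(\delta)}{\sin^\alpha\delta}\right|\exp\left\{\int^\delta_\theta\cot\tau\left(\alpha+1-\dv{r_2}{r_1}\right)d\tau\right\},
\]
after which both cases follow by inspecting the sign of the integrand near $0$. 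You instead run the same integration on $u=r_1-r_1(0)$, using $u'/u=(s/u)\cot\theta$ and $s/u\to\mu_0-1$, and then transfer back to $s$ via $s=(\mu_0-1+o(1))\,u$. That detour is sound for $\mu_0\neq 1$ but creates a genuine gap at $\mu_0=1$: there the transfer collapses to $s=o(u)$, which yields an upper bound on $|s|$ but not the lower bound that part (2) demands (for $\mu_0=1<\alpha+1$ one must show $|s|/\sin^\alpha\theta\to\infty$), so your claim that "$s$ also behaves like $\theta^{\mu_0-1}$ up to slowly varying factors" is unjustified precisely in that case. Working with $\log|s|$ directly, as the paper does, costs nothing and avoids the problem. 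Two minor points: the constancy of the sign of $s$ (hence "$\pm\infty$" rather than "$\infty$") is immediate from continuity of $s$ and the isolatedness of the umbilic, with no need for the monotonicity argument; and your worry about identifying the difference-quotient $\mu_0$ with the derivative ratio is legitimate --- the paper quietly uses the $C^3$ hypothesis to equate $\mu_0$ with $\lim_{\theta\to 0}\dv{r_2}{r_1}$, which is the quantity its integral actually sees.
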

\begin{proof}
We prove the result for $\theta=0$, the $\theta=\pi$ case follows by reflection.  Since the umbilic at $\theta=0$ is isolated, there exists $\delta\in(0,\frac{\pi}{2})$ such that $s\neq0$ for $\theta\in(0,\delta]$. Therefore by the Codazzi-Mainardi equation (\ref{eq: codazzi mainardi}), $\dv{r_1}{\theta}\neq0$ on this interval also. Furthermore since the surface is assumed strictly convex and $C^3$-smooth, $\dv{r_2}{\theta}$ is continuous and bounded. Therefore $\dv{r_2}{r_1}$ is continuous on $(0,\delta]$. 

From the Codazzi-Mainardi equation and the definition of $s$ one can derive the separable ODE:
\[
\dv{r_2}{r_1}=1+\frac{\tan \theta}{s}\dv{s}{\theta}(\theta).
\]
Integrating from $\theta$ to $\delta$ gives
\begin{align*}
\left|s\right|&=|s(\delta)|\exp\left\{\int^\delta_\theta\cot\theta\left(1-\dv{r_2}{r_1}\right)d\theta\right\}\\
&=\left|\frac{s(\delta)}{\sin^\alpha\delta}\right|\sin^\alpha\theta\cdot \exp\left\{\int^\delta_\theta\cot\theta\left(\alpha+1-\dv{r_2}{r_1}\right)d\theta\right\},
\end{align*}
for all $\theta \in (0,\delta]$. Dividing both sides by $\sin^\alpha\theta$ we derive the relationship
\begin{align*}
    &\left|\frac{s(\theta)}{\sin^\alpha\theta}\right|=\left|\frac{s(\delta)}{\sin^\alpha\delta}\right|\exp\left\{\int^\delta_\theta\cot\theta\left(\alpha+1-\dv{r_2}{r_1}\right)d\theta\right\} &\forall \theta \in (0,\delta].
\end{align*}
Now let $\theta \to 0$. If $\mu_0>\alpha+1$ the quantity in parenthesis diverges to $-\infty$, implying $s/\sin^{\alpha}\theta \to 0$. On the other hand if $\mu_0<\alpha+1$ the quantity in parenthesis diverges to $+\infty$ which implies $|s/\sin^{\alpha}\theta| \to \infty$.
\end{proof}
\vspace{0.1in}

\begin{remark}
Proposition \ref{prop: slope and order relation} shows how the umbilic slopes dictate the rate of vanishing of $s$ at the north and south poles. We remark that if it is the case that $\mu_0$ or $\mu_\pi$ is equal to $\alpha+1$, then $s/\sin^{\alpha}\theta$ can exhibit either of the two behaviours in Proposition \ref{prop: slope and order relation} or tend to a non-zero constant, as illustrated by the three examples $s=\sin^\alpha\theta\cdot\ln\left(2 \csc \theta\right)^\epsilon$ for $\epsilon=-1,0,1$.
\end{remark}
\vspace{0.1in}

\begin{Thm}\label{thm: quantisation of smooth slope}
 If $\mathcal{S}\in \mathscr{W}$ is $C^\infty$ and has an isolated umbilic at the north or south pole, then the umbilic slope at that pole when it exists \& is finite, takes a value of an odd integer greater than or equal to $3$. 
 
If furthermore $\alpha+1$ is the value of the umbilic slope at a given pole, the limit of $s/\sin^{\alpha}\theta$ as we approach the pole is finite, non-zero.
 \end{Thm}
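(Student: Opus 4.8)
Here is the proof strategy.

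The plan is to reduce the statement to the order of vanishing of the astigmatism $s$ at the pole. I argue at $\theta=0$; the case $\theta=\pi$ is identical by reflection. Because $\mathcal{S}$ is $C^\infty$ and strictly convex, its support function $r$ is $C^\infty$, and rotational symmetry makes $r=r(\theta)$; near the north pole a rotationally symmetric $C^\infty$ function is a $C^\infty$ function of $\sin^2\theta$, so $r$ --- and then by (\ref{eq: curvatures in terms of r}) also $r_1,r_2$ and $s=r_2-r_1$ --- is a $C^\infty$ \emph{even} function of $\theta$ near $\theta=0$. Hence the Taylor expansion of $s$ at $\theta=0$ contains only even powers. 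Since the umbilic is isolated, $s\not\equiv0$ near $0$, so, unless $s$ vanishes to infinite order, we may write
\[
s(\theta)=c_k\,\theta^{2k}+O(\theta^{2k+2}),\qquad c_k\neq0,\quad k\ge1,
\]
with $k\ge1$ because $s(0)=0$. The excluded flat case, handled at the end, is exactly the one for which $\mu_0$ is not finite, so under the theorem's hypothesis this expansion is always available.

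Granting the expansion, the conclusion follows from the derived Codazzi-Mainardi equation (\ref{eq: codazzi mainardi}). Since $s\cot\theta=c_k\,\theta^{2k-1}+O(\theta^{2k+1})$ near $0$, integrating gives
\[
r_1(\theta)-r_1(0)=\int_0^\theta s(\theta')\cot\theta'\,d\theta'=\frac{c_k}{2k}\,\theta^{2k}+O(\theta^{2k+2}),
\]
and, using $r_2=r_1+s$ together with $r_2(0)=r_1(0)$,
\[
r_2(\theta)-r_2(0)=\bigl(r_1(\theta)-r_1(0)\bigr)+s(\theta)=\frac{(2k+1)\,c_k}{2k}\,\theta^{2k}+O(\theta^{2k+2}).
\]
Dividing the two asymptotics, $\mu_0=\lim_{\theta\to0}\dfrac{r_2(\theta)-r_2(0)}{r_1(\theta)-r_1(0)}=2k+1$, an odd integer that is $\ge3$. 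For the second assertion take $\alpha=\mu_0-1=2k$; then $s/\sin^{\alpha}\theta=c_k+O(\theta^2)\to c_k$, finite and nonzero. Equivalently, both conclusions can be read off Proposition \ref{prop: slope and order relation}: a finite nonzero value of $\lim s/\sin^{2k}\theta$ excludes the $0$ of case (1) and the $\pm\infty$ of case (2), which forces $\mu_0=2k+1$.

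The only genuinely nontrivial step is the regularity-and-parity claim in the first paragraph --- upgrading $C^\infty$-smoothness of $\mathcal{S}$ as a submanifold to $C^\infty$-smoothness \emph{and} evenness of $r_1(\theta)$ in the angle $\theta$; this is where one uses that a rotationally symmetric smooth function near the pole is a smooth function of $\sin^2\theta$, killing the odd-order Taylor coefficients. The remaining point is that an infinite order of vanishing of $s$ is incompatible with $\mu_0$ being finite: if $s$ is flat at $0$ (and, the umbilic being isolated, of one sign near $0$), then $F(\theta):=\int_0^\theta s\cot\theta'\,d\theta'$ decays faster than any power of $\theta$, so $F(2\theta)/F(\theta)$ is unbounded as $\theta\to0$; since $s/F=\tan\theta\cdot(\ln F)'$, the mean value theorem for $\ln F$ on $[\theta,2\theta]$ makes $s/F$ unbounded, whence $\mu_0=1+\lim(s/F)$ cannot be finite. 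Everything else --- the reduction to a finite even order of vanishing and the Codazzi-Mainardi estimate --- is elementary.
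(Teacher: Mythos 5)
Your proof is correct, and it rests on the same two pillars as the paper's: (i) smoothness plus rotational symmetry forces all odd-order Taylor coefficients of $s$ at the pole to vanish, so the leading order of vanishing of $s$ is some even integer $2k\geq 2$; and (ii) the derived Codazzi--Mainardi equation (\ref{eq: codazzi mainardi}) converts that order of vanishing into the slope value $2k+1$. Where you differ is in how (ii) is executed and how the infinitely-flat case is excluded. The paper routes everything through Proposition \ref{prop: slope and order relation}: L'H\^opital identifies $\lim_{\theta\to0}s/\sin^{2k}\theta$ with $s^{(2k)}(0)/(2k)!\neq 0,\pm\infty$, and the contrapositives of parts (1) and (2) sandwich $\mu_0$ between $2k+1$ and $2k+1$, while the flat case is dispatched by the contrapositive of part (2) alone, which gives $\mu_0\geq\beta+1$ for every $\beta$. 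You instead integrate $\dv{r_1}{\theta}=s\cot\theta$ directly to obtain the matched asymptotics $r_1(\theta)-r_1(0)\sim\frac{c_k}{2k}\theta^{2k}$ and $r_2(\theta)-r_2(0)\sim\frac{(2k+1)c_k}{2k}\theta^{2k}$, which yields $\mu_0=2k+1$ and the finite nonzero limit $c_k$ of $s/\sin^{2k}\theta$ in one stroke --- essentially unwinding Proposition \ref{prop: slope and order relation} in this special case, a shortcut you also note. Your exclusion of the flat case via the doubling estimate on $F(\theta)=\int_0^\theta s\cot\theta'\,d\theta'$ and the mean value theorem for $\ln F$ is genuinely different from, and more self-contained than, the paper's appeal to Proposition \ref{prop: slope and order relation}(2); it is correct but does more work than necessary given that the proposition is already available. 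Both arguments leave the same step at the same level of rigor, namely the evenness of $s(\theta)$ near the pole, which the paper simply asserts and you justify in one line via smooth dependence on $\sin^2\theta$.
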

 \begin{proof}
Let $\mu$ be the umbilic slope at $\theta=0$ and assume it is finite. Since $\mathcal{S}$ is strictly convex, rotationally symmetric and smooth, $s$ and all odd derivatives of $s$ vanish at $\theta=0$, i.e. $s(0)=s^{(2m+1)}(0)=0$ for all $m\in\mathbb{N}$.
 
 Now assume for contradiction that all even derivatives also vanish. Then $s^{(m)}(0)=0$ for all $m\in\mathbb{N}_0$. In particular for any $\beta \in \mathbb{N}$, by L'H\^opitals rule
 \[
 \lim_{\theta \to 0}\left(\frac{s}{\sin^\beta \theta}\right)=\ldots=\frac{1}{\beta !}s^{(\beta)}(0)=0.
 \]
 Using the contrapositive of (2) in Proposition \ref{prop: slope and order relation}, it follows that $\mu\geq \beta+1$ which contradicts the assumption of $\mu$ being finite. Therefore there exists some $k\in\mathbb{N}$ such that $s^{(2k)}(0)\neq0$, without loss of generality take $k$ to be the smallest natural number such that this holds, so $s^{(m)}(0)=0$ for all $m<2k$. The smoothness assumption on $\mathcal{S}$ grantees the existence of $s^{(2k)}(0)$ and therefore we have the following limit
 
 \[
  \lim_{\theta \to 0}\left(\frac{s}{\sin^{2k} \theta}\right)=\ldots=\frac{1}{(2k)!}s^{(2k)}(0)\neq0,\pm \infty.
 \]
 
 This time using the contrapositive of both (1) and (2) in Proposition \ref{prop: slope and order relation} we have $\mu \geq 2k+1$ and $\mu \leq 2k+1$ which proves the first claim. The second claim follows from the above limit. If the isolated umbilic is at $\theta=\pi$ we argue by reflection.
 \end{proof}
 
 \vspace{0.1in}

Although the umbilic slopes are not generally quantised for non-smooth spheres with isolated umbilic points, the regularity of a sphere still places restrictions on the possible values of the umbilic slope.

\vspace{0.1in}

\begin{Prop}\label{prop: If C^4 then surface has umbilic slope gre 3 }
If $\mathcal{S}\in\mathscr{W}$ is of regularity $C^4$ and has isolated umbilic points, then the umbilic slopes of $\mathcal{S}$ are greater or equal to $3$. 
\end{Prop}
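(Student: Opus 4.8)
The plan is to reduce everything to the contrapositive of part~(2) of Proposition~\ref{prop: slope and order relation} with $\alpha=2$: if $\lim_{\theta\to 0}\bigl(s/\sin^2\theta\bigr)$ is not $\pm\infty$, then $\mu_0\geq 3$, and symmetrically at the south pole. So it suffices to show that at each pole $s/\sin^2\theta$ tends to a finite real number.

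First I would note that $C^4$-regularity of $\mathcal{S}$ makes $r_1$, $r_2$, hence $s=r_2-r_1$, of class $C^2$ on $[0,\pi]$, and that $s(0)=s(\pi)=0$ since the poles are umbilic. The substance of the argument is then the single identity $s'(0)=s'(\pi)=0$: this is the lowest-order instance of the vanishing of the odd-order $\theta$-derivatives of the astigmatism at an umbilic pole of a rotationally symmetric surface, the analogue for $C^\infty$ surfaces being the fact quoted in the proof of Theorem~\ref{thm: quantisation of smooth slope}, and at regularity $C^4$ this first identity is the only one available and all that is required. I would prove it by a parity argument: near the north pole $\mathcal{S}$ is a graph $x^3=H(\rho)$ over its horizontal tangent plane there, with $\rho$ the distance to the axis and $H\in C^4$; a standard parity fact for rotationally symmetric $C^4$ graphs gives $H'(0)=H'''(0)=0$, and then expressing $r_1,r_2$ through $H$ and the normal angle $\theta$ (using $\tan\theta=-H'(\rho)$) and expanding in $\rho$ shows that $r_1$ and $r_2$ are even functions of $\theta$ to second order, whence $\dv{r_1}{\theta}(0)=\dv{r_2}{\theta}(0)=0$ and $s'(0)=0$. (Equivalently, one can argue with the plane curve obtained by sectioning $\mathcal{S}$ with a plane through the axis: it is $C^4$, strictly convex, and invariant under reflection in the axis, so its curvature is an even function of arc length with vanishing first derivative at the pole, which gives $\dv{r_2}{\theta}(0)=0$; combined with $r_1=\rho(\theta)/\sin\theta$ and the fact that $\rho$ is odd in $\theta$ to third order, also $\dv{r_1}{\theta}(0)=0$.) The pole $\theta=\pi$ follows by the substitution $\theta\mapsto\pi-\theta$.

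Granting $s\in C^2$ with $s(0)=s'(0)=0$, two applications of L'H\^opital's rule give
\[
\lim_{\theta\to 0}\frac{s(\theta)}{\sin^2\theta}
=\lim_{\theta\to 0}\frac{s'(\theta)}{2\sin\theta\cos\theta}
=\lim_{\theta\to 0}\frac{s''(\theta)}{2\cos 2\theta}
=\tfrac12\,s''(0)\in\mathbb{R},
\]
and similarly at $\theta=\pi$; the contrapositive of Proposition~\ref{prop: slope and order relation}(2) then yields $\mu_0\geq 3$ and $\mu_\pi\geq 3$, as claimed.

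I expect the only genuinely delicate point to be the parity identity $s'(0)=0$: one must track carefully the difference between the regularity of $\mathcal{S}$ as a submanifold and the regularity of $r_1,r_2$ as functions of $\theta$, and it is exactly here that $C^4$ rather than merely $C^3$ is needed — it is $C^4$-regularity that forces $s$ to vanish to \emph{second} order at the poles, which is what makes $\lim s/\sin^2\theta$ finite rather than possibly infinite.
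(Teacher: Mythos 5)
Your proof is correct and follows essentially the same route as the paper's: two applications of L'H\^opital's rule show that $\lim_{\theta\to 0} s/\sin^2\theta = \tfrac{1}{2}s''(0)$ is finite, and the contrapositive of Proposition~\ref{prop: slope and order relation}(2) with $\alpha=2$ then gives $\mu_0\geq 3$, with the south pole handled by reflection. The only difference is that you spell out the parity argument establishing $s'(0)=0$ (which is what legitimises the second L'H\^opital step), a point the paper leaves implicit under ``the assumed regularity of $\mathcal{S}$''.
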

\begin{proof}
First argue at the $\theta=0$ umbilic. By L'H\^opital and the assumed regularity of $\mathcal{S}$, we have the existence of the following limit:
\begin{equation}\label{eq: limit of s/sin from l'hopital}
\lim_{\theta \to 0}\left(\frac{s}{\sin^2\theta}\right)=\lim_{\theta \to 0}\left(\frac{\dv{s}{\theta}}{2\cos\theta\sin\theta}\right)=\left.\frac{1}{2}\dv[2]{s}{\theta}\right|_{\theta=0}
\end{equation}
Hence by the converse of (2) in Proposition (\ref{prop: slope and order relation}), we have that $\mu_0\geq 3$. The case at $\theta=\pi$ follows by reflection.
\end{proof}
\section{Linear Hopf Flow} \label{ch: Linear Hopf Flow}

\subsection{Stationary Solutions}
The stationary solutions of the linear Hopf flow (\ref{eq: curvature flow equation}) are surfaces satisfying the curvature relationship
\begin{equation}\label{eq: linear curvature relationship}
    0=ar_1+br_2+c.
\end{equation}
 The surfaces in $\mathscr{W}$ for which a general linear curvature relationship such as equation (\ref{eq: linear curvature relationship}) holds are called \textit{linear Hopf spheres}. 
 
 \vspace{0.1in}
 
 \begin{Prop}\label{prop: Hopf sphere geometric quantities}
 The linear Hopf spheres which solve equation (\ref{eq: linear curvature relationship}) for given parameter values $a$, $b$ and $c$ have astigmatisms of the form
\begin{equation}\label{eq: astigmatism of hopf sphere}
s_\text{\tiny{Hopf}}=C_0\sin^{2n+2}\theta,
\end{equation}
and support functions of the form
\begin{equation}\label{eq: support function of hopf sphere}
r_\text{\tiny{Hopf}}=C_2\cos\theta + C_1+C_0\left[\frac{\sin^{2n+2}\theta}{2n+2}-\cos\theta\int^\theta_0\sin^{2n+1}\theta d\theta\right],
\end{equation}
where $C_0$,$C_1$ and $C_2$ are constants and $-a/b=2n+3$.
 \end{Prop}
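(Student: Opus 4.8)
The plan is to eliminate the support function, reduce the defining relation (\ref{eq: linear curvature relationship}) to a first-order separable ODE for the astigmatism $s$ alone, solve it, and then recover $r$ from the quadrature formula (\ref{eq: support function from quadratures}).

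First I would note that a surface satisfying (\ref{eq: linear curvature relationship}) with $b\neq 0$ has support function obeying the linear equation $b\dv[2]{r}{\theta}+a\cot\theta\,\dv{r}{\theta}+(a+b)r+c=0$ on $(0,\pi)$, hence is smooth there, so the derived Codazzi--Mainardi equation (\ref{eq: codazzi mainardi}) is available. Differentiating (\ref{eq: linear curvature relationship}) gives $a\dv{r_1}{\theta}+b\dv{r_2}{\theta}=0$; writing $r_2=r_1+s$ and substituting $\dv{r_1}{\theta}=s\cot\theta$ turns this into the separable equation $b\dv{s}{\theta}=-(a+b)\,s\cot\theta$. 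If $s\not\equiv 0$, then on any subinterval where $s$ is nonzero $\tfrac{1}{s}\dv{s}{\theta}=-\tfrac{a+b}{b}\cot\theta$, and integration gives $s=C_0\sin^{-(a+b)/b}\theta$; since $-(a+b)/b=-a/b-1=2n+2$ with $n$ defined by $-a/b=2n+3$, this is exactly (\ref{eq: astigmatism of hopf sphere}), while $s\equiv 0$ (the round sphere) is the case $C_0=0$.

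To recover the support function I would substitute $s_\text{\tiny{Hopf}}=C_0\sin^{2n+2}\theta$ into (\ref{eq: support function from quadratures}). The inner antiderivative is $\int\tfrac{s}{\sin\theta}\,d\theta=C_0\int_0^\theta\sin^{2n+1}\theta'\,d\theta'+A$, and the constant $A$ only contributes a term $-A\cos\theta$ to the outer integral, which merges into $C_2\cos\theta$. The remaining piece $C_0\int\sin\theta\bigl(\int_0^\theta\sin^{2n+1}\theta'\,d\theta'\bigr)\,d\theta$ I would evaluate by parts with $dv=\sin\theta\,d\theta$ and $u=\int_0^\theta\sin^{2n+1}\theta'\,d\theta'$, giving $-\cos\theta\int_0^\theta\sin^{2n+1}\theta'\,d\theta'+\tfrac{\sin^{2n+2}\theta}{2n+2}$ up to a constant absorbed into $C_1$. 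Assembling the pieces produces (\ref{eq: support function of hopf sphere}).

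The computation is routine; the only real care lies in tracking the several constants of integration, and here Remark \ref{rem: par trans along normal lines} is the organizing principle --- the spurious constant and $\cos\theta$ terms generated at each step are precisely ambient translations and parallel displacements, so they may legitimately be folded into the free parameters $C_1$ and $C_2$. It is also worth flagging the borderline value $a+b=0$, i.e. $n=-1$: there the derivation forces $s\equiv 0$ and (\ref{eq: support function of hopf sphere}) must be read in a limiting sense.
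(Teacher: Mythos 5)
Your proposal is correct and follows essentially the same route as the paper: the derived Codazzi--Mainardi equation (\ref{eq: codazzi mainardi}) is combined with the linear curvature relationship (\ref{eq: linear curvature relationship}) to produce a separable ODE whose solution is (\ref{eq: astigmatism of hopf sphere}), and the support function is then recovered by the quadrature formula (\ref{eq: support function from quadratures}). The extra details you supply --- the regularity bootstrap justifying the use of (\ref{eq: codazzi mainardi}), the bookkeeping of integration constants via Remark \ref{rem: par trans along normal lines}, and the degenerate case $a+b=0$ --- are all sound refinements of the paper's one-line argument.
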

\begin{proof}
Combing the derived Codazzi-Mainardi equation (\ref{eq: codazzi mainardi}) with the curvature relationship (\ref{eq: linear curvature relationship}) results in a separable ODE which is solved to give equation (\ref{eq: astigmatism of hopf sphere}). The support function (\ref{eq: support function of hopf sphere}) follows by integrating equation (\ref{eq: astigmatism of hopf sphere}) by quadrature as in equation (\ref{eq: support function from quadratures}).
\end{proof}
\vspace{0.1in}

Any surface with isolated umbilic points satisfying equation (\ref{eq: linear curvature relationship}) necessarily has umbilic slopes taking the common value
\[
\mu_0=\mu_\pi=-a/b.
\]
Furthermore, if this surface is in $\mathscr{W}$ and smooth, Theorem \ref{thm: quantisation of smooth slope} implies $-a/b=2n+3$ for some $n\in \mathbb{N}_0$. Therefore the flow can only have smooth, non-round stationary solutions for such values of $-a/b$, i.e. the only smooth linear Hopf spheres are ones with odd umbilic slope.  

For this reason the work undertaken in \cite{gk21} investigated the linear Hopf flow with $-a/b$ restricted to an odd integer $\geq 3$, on smooth initial surfaces in $\mathscr{W}$. The central result is then:

\vspace{0.1in}
\begin{Thm} \cite{gk21} \label{thm: Integer Lin Hopf flow theorem}
Let $\mathcal{S}_{0}\in\mathscr{W}$ be a smooth initial surface with equal umbilic slopes $\mu=\mu_0=\mu_\pi$. The linear Hopf flow (\ref{eq: curvature flow equation}) and (\ref{eq: quantised slope equ}) behaves in the following manner:
\begin{enumerate}[label={\upshape(\arabic*)}]
    \item if $2n+3<\mu$, the evolving sphere converges exponentially through smooth convex spheres to the round sphere of radius $\frac{c}{2(n+1)}$,\\
    \item if $2n+3=\mu$, an initial non-round sphere converges exponentially thorough smooth convex spheres to a non-round linear Hopf sphere,\\
    \item if $2n+3>\mu$, the sphere diverges exponentially.
\end{enumerate}
\end{Thm}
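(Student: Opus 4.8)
The plan is to convert the geometric flow into a single scalar linear parabolic equation on $(0,\pi)$, diagonalise it against an eigenbasis of the Legendre operator $\mathscr{L}_n^n$, and read the long-time behaviour off the spectrum; the trichotomy in the statement then records where the initial astigmatism sits relative to the domain and the kernel of that operator.

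First I would pass to the support function. Parametrising by the Gauss map freezes the unit normal in time, so $r=\vec{X}\cdot\hat{n}$ obeys $\partial_t r=ar_1+br_2+c$; with $r_1=r+r_\theta\cot\theta$ and $r_2=r+r_{\theta\theta}$ from (\ref{eq: curvatures in terms of r}) this is a linear equation in $r$, parabolic when $b>0$ and singular at the poles. Applying the operator $r\mapsto r_{\theta\theta}-\cot\theta\,r_\theta$, which sends $r$ to $s=r_2-r_1$, and using the derived Codazzi--Mainardi equation (\ref{eq: codazzi mainardi}) to eliminate $r$, one obtains a closed equation for the astigmatism,
\[
\partial_t s=b\Big(s_{\theta\theta}-(2n+3)\cot\theta\,s_\theta+2(n+1)(2\csc^2\theta-1)\,s\Big),
\]
after substituting $-a/b=2n+3$. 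The change of variable $w=s/\sin^{n+2}\theta$ — the exponent $n+2$ being forced by matching the coefficient of $w_\theta$ to $\cot\theta$ — turns this into $\partial_t w=b\,\mathscr{L}_n^n w$ on $\lsin$, and a direct check gives $\mathscr{L}_n^n(\sin^n\theta)=0$; thus the stationary Hopf astigmatisms $s_{\mathrm{Hopf}}=C_0\sin^{2n+2}\theta$ of (\ref{eq: support function of hopf sphere}) correspond precisely to $w=C_0\sin^n\theta$ spanning $\ker\mathscr{L}_n^n$.

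Next I would invoke the spectral theory of the singular Sturm--Liouville operator $\mathscr{L}_n^n$. For $n\in\mathbb{N}_0$ it is limit-point at both endpoints when $n\ge1$ and limit-circle when $n=0$; each self-adjoint realisation has purely discrete spectrum, with eigenfunctions the associated Legendre functions $P_\ell^{\,n}(\cos\theta)$ and eigenvalues $n(n+1)-\ell(\ell+1)\le 0$, the top one being the stationary mode $\sin^n\theta$. Expanding $w_0=s_0/\sin^{n+2}\theta$ in the orthonormal eigenbasis of the realisation matching the pole behaviour of $s_0$, the solution $w(t)=\sum_\ell c_\ell e^{b\lambda_\ell t}\phi_\ell$ converges exponentially to its projection onto $\ker\mathscr{L}_n^n$, so $s(t)$ converges to a (possibly zero) multiple of $\sin^{2n+2}\theta$. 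Which alternative occurs is dictated by the order of vanishing of $s_0$ at the poles, controlled via Proposition \ref{prop: slope and order relation} by $\mu$ relative to $2n+3$ (and $\mu$ is odd and $\ge 3$ by Theorem \ref{thm: quantisation of smooth slope}): when $\mu\ge 2n+3$ the $C^\infty$ regularity of $\mathcal{S}_0$ places $w_0$, and all $(\mathscr{L}_n^n)^kw_0$, in a self-adjoint domain, the expansion converges in $C^\infty$, and one gets convergence to a Hopf sphere — non-round when $\mu=2n+3$ and the data meets the kernel (case (2)), and round when the stronger decay forced by $\mu>2n+3$ removes the kernel component (case (1)); when $\mu<2n+3$ the data lies in no self-adjoint domain, the series fails to be summable, and one checks directly that the curvature radii leave the positive cone, i.e.\ the flow diverges (case (3)). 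When present, the kernel coefficient is proportional to $\int_0^\pi(s_0/\sin\theta)\,d\theta$, which by Remark \ref{rem: focal point positions} and (\ref{eq: support function from quadratures}) is the signed axial separation $f_0-f_\pi$ of the focal points of $\mathcal{S}_0$.

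Finally, the scalar picture must be promoted to convergence of surfaces inside $\mathscr{W}$. The parallel-surface and axial-translation freedoms ($C_1,C_2$ in (\ref{eq: support function from quadratures})) split off harmlessly: $C_2$ is an ambient isometry, and $C_1$ solves $\dot C_1=(a+b)C_1+c$ with limit $-c/(a+b)$, pinning the radius $\tfrac{c}{2(n+1)}$ of the round limit. The step I expect to be the main obstacle is showing the evolving surface stays smooth, embedded and \emph{strictly convex} for all $t\ge 0$, so that the Gauss-map parametrisation — hence the whole reduction — remains valid; here I would combine parabolic smoothing of $r(\cdot,t)$ with the contraction property of the semigroup $e^{tb\mathscr{L}_n^n}$ and the $C^\infty$ decay estimates to confine $(r,s)$ to a $C^2$-neighbourhood of the strictly convex limit, and show that positivity of $r_1,r_2$ is preserved, e.g.\ via the maximum principle applied to the equations they satisfy. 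The other delicate point — identifying the correct self-adjoint realisation of $\mathscr{L}_n^n$ and matching its domain precisely to the pole behaviour of $s_0$ — is comparatively mild for integer $n$, but it is exactly this issue that becomes the crux of the non-integer analysis carried out in the body of the paper.
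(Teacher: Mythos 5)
The paper does not actually prove this statement: Theorem \ref{thm: Integer Lin Hopf flow theorem} is imported from \cite{gk21}, and the only account of its proof given here is the paragraph around equation (\ref{eq: integer astigmatism decomposition}), namely that in the integer case one expands $s=\sum_l(a_l+b_l\cos\theta)\sin^{2l+2}\theta$ and solves the resulting ODE system for the coefficients $a_l(t)$, $b_l(t)$. Your general strategy (pass to $w=s/\sin^{n+2}\theta$, use the spectrum of $\mathscr{L}^n_n$, read off the asymptotics) is the same reduction the paper uses throughout, so the framework is sound. However, there is a concrete gap in how you derive the trichotomy.

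You claim that $\mu>2n+3$ ``removes the kernel component'' of $w_0$, giving the round limit of case (1). In the orthogonal eigenbasis you invoke, the kernel component is
\[
\frac{\langle w_0,\sin^n\theta\rangle_{\sin\theta}}{\|\sin^n\theta\|^2}\;\propto\;\int_0^\pi\frac{s_0}{\sin\theta}\,d\theta=f_0-f_\pi,
\]
exactly as in the corollary to Theorem \ref{thm: decomposition of geometric quantities}. This is a global integral and is not controlled by the order of vanishing of $s_0$ at the poles. For example $s_0=\sin^{2n+4}\theta$ defines a smooth, convex, rotationally symmetric sphere with $\mu_0=\mu_\pi=2n+5>2n+3$, yet $\int_0^\pi s_0/\sin\theta\,d\theta>0$; your own spectral machinery then gives $s(t)\to c\sin^{2n+2}\theta$ with $c\neq 0$ (for $n=0$ one checks directly that $\mathscr{L}^0_0(\sin^2\theta)=4-6\sin^2\theta$, so $w(t)\to\tfrac{2}{3}$, not $0$). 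In other words, the $L^2$-projection argument yields the focal-point dichotomy of Theorem \ref{thm: main thm 1}, not the slope dichotomy of cases (1)--(2); to recover the latter you must work with the non-orthogonal, triangularly coupled decomposition (\ref{eq: integer astigmatism decomposition}) actually used in \cite{gk21}, and you need to reconcile the two criteria before asserting case (1). Separately, case (3) cannot be established by saying the data ``lies in no self-adjoint domain'' so ``the series fails to be summable'': every eigenvalue of any self-adjoint realisation of $\mathscr{L}^n_n$ is nonpositive, so failure of an $L^2$ expansion proves nothing about divergence. The exponential growth in case (3) comes from the modes $(a_l+b_l\cos\theta)\sin^{2l+2}\theta$ with $l<n$, which lie outside the $L^2$ framework and carry positive generalized eigenvalues; these must be exhibited explicitly. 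Finally, the preservation of strict convexity and smoothness along the flow, which you correctly flag as the main remaining obstacle, is left unaddressed.
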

\vspace{0.1in}

Since in the current paper we consider $n$ which is generically non-integer, we cannot expect generically smooth, non-round stationary solutions as in Theorem \ref{thm: Integer Lin Hopf flow theorem}.

We also remark that the flow in Theorem \ref{thm: Integer Lin Hopf flow theorem} fixes $b=1$, where as in this work we allow $b>0$. These conventions are equivalent up to a parabolic scaling $t \mapsto bt$ of equation (\ref{eq: curvature flow equation}).
\vspace{0.1in}

\subsection{Time Evolution of Geometric Quantities}

The curvature flow equation (\ref{eq: curvature flow equation}) is equivalent to the following evolution equation for the support function
\begin{align}\label{eq: evolution of support function}
    \pdv[]{r}{t}=b\frac{\partial^2r}{\partial\theta^2}+a\cot\theta \frac{\partial r}{\partial\theta}+(a+b)r+c,
\end{align}
as can be seen from the definition of $r$ and equations (\ref{eq: curvatures in terms of r}). We remark that the coordinate singularities in equation (\ref{eq: evolution of support function}) prevent us from using regular Sturm-Liouville theory to derive an associated eigenbasis, which motivates the singular theory discussed in Section \ref{ch: Singular Sturm-Liouville Operators}.

The solution $r(t,\theta)$ to equation (\ref{eq: evolution of support function}) may be found by first considering the behaviour of the astigmatism $s(t,\theta)$. The support function $r(t,\theta)$ may then be recovered via quadrature by equation (\ref{eq: support function from quadratures}) up to two time-dependent constants determined by an initial condition and equation (\ref{eq: evolution of support function}). 

\begin{Prop}
Under the linear Hopf flow the astigmatism evolves as
\begin{equation}\label{eq: s/sin evolution}
\pdv{}{t}\left(\frac{s}{\sin^{n+2}\theta}\right)=b\cdot \mathscr{L}^n_n\left(\frac{s}{\sin^{n+2}\theta}\right),
\end{equation}
where $\theta \in (0,\pi)$,  $-a/b=2n+3$ and
\begin{equation}\label{e:legop}
\mathscr{L}^\mu_\nu=\pdv[2]{}{\theta}+\cot\theta\pdv{}{\theta}
+(\nu+1)\nu-\frac{\mu^2}{\sin^2\theta},
\end{equation}
is the Legendre operator.
\end{Prop}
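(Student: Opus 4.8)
The plan is to convert the linear evolution equation (\ref{eq: evolution of support function}) for the support function into a closed equation for the astigmatism $s$, and then to recognise the operator that appears, after rescaling by $\sin^{n+2}\theta$, as $b\,\mathscr{L}^n_n$.

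First I would rewrite the curvature formulas (\ref{eq: curvatures in terms of r}) as $r_1 = r + \cot\theta\,\dv{r}{\theta}$ and $r_2 = r + \dv[2]{r}{\theta}$, so that
\[
s \;=\; r_2 - r_1 \;=\; \dv[2]{r}{\theta} - \cot\theta\,\dv{r}{\theta} \;=:\; \mathcal{D}r,
\qquad
\mathcal{D} := \pdv[2]{}{\theta} - \cot\theta\,\pdv{}{\theta}.
\]
Since $\mathcal{D}$ has $t$-independent coefficients and $\partial_t$ commutes with $\partial_\theta$, differentiating in time gives $\pdv{s}{t} = \mathcal{D}\!\left(\pdv{r}{t}\right)$. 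On the right-hand side of (\ref{eq: evolution of support function}) I would use $\dv[2]{r}{\theta} = s + \cot\theta\,\dv{r}{\theta}$ together with $r_1 = r + \cot\theta\,\dv{r}{\theta}$ to collapse the second- and first-order terms, so that the right-hand side becomes $b\,s + (a+b)\,r_1 + c$ and hence $\pdv{s}{t} = \mathcal{D}\bigl(b\,s + (a+b)\,r_1 + c\bigr)$. Now $\mathcal{D}c = 0$ trivially, $\mathcal{D}s = \dv[2]{s}{\theta} - \cot\theta\,\dv{s}{\theta}$, and, using the derived Codazzi--Mainardi equation (\ref{eq: codazzi mainardi}) in the form $\dv{r_1}{\theta} = s\cot\theta$ (valid wherever the evolving surface is $C^3$), $\mathcal{D}r_1 = \cot\theta\,\dv{s}{\theta} - (2\csc^2\theta - 1)\,s$. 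Collecting terms yields the closed linear parabolic equation
\[
\pdv{s}{t} \;=\; b\,\dv[2]{s}{\theta} + a\cot\theta\,\dv{s}{\theta} + (a+b)\bigl(1 - 2\csc^2\theta\bigr)\,s .
\]

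To finish I would set $w := s/\sin^{n+2}\theta$. Since $a = -(2n+3)b$ and $a+b = -2(n+1)b$, and since $\partial_t$ commutes with multiplication by the time-independent factor $\sin^{-(n+2)}\theta$, the assertion (\ref{eq: s/sin evolution}) is equivalent to the operator identity
\[
\sin^{n+2}\theta\;\mathscr{L}^n_n\bigl(\sin^{-(n+2)}\theta\,\cdot\bigr)
\;=\; \pdv[2]{}{\theta} - (2n+3)\cot\theta\,\pdv{}{\theta} + 4(n+1)\csc^2\theta - 2(n+1),
\]
with $\mathscr{L}^n_n$ given by (\ref{e:legop}). This is a short conjugation computation: writing $g = \sin^{-(n+2)}\theta$ one has $g'/g = -(n+2)\cot\theta$, so the first-order coefficient on the left is $2g'/g + \cot\theta = -(2n+3)\cot\theta$; and the zeroth-order coefficient is $g''/g + \cot\theta\,(g'/g) + n(n+1) - n^2\csc^2\theta$, in which --- after replacing $\cos^2\theta$ by $1 - \sin^2\theta$ --- the $\csc^2\theta$ part collapses to $\bigl[(n+2)(n+3) - (n+2) - n^2\bigr]\csc^2\theta = (n+2)^2 - n^2 = 4(n+1)$ and the constant part is $-(n+2)^2 + (n+2) + n(n+1) = -2(n+1)$. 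Matching both sides and undoing the substitution gives (\ref{eq: s/sin evolution}).

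Every step is elementary; the only place where care is needed --- and where essentially all of the computation lives --- is the final bookkeeping of the zeroth-order coefficient in the conjugation, since the $\csc^2\theta$ contributions from $g''$, from $\cot\theta\,g'$, and from the $-n^2\csc^2\theta$ term of $\mathscr{L}^n_n$ must cancel down to exactly $4(n+1)\csc^2\theta$. I would also note at the outset that the derivation presupposes the flowing surface stays strictly convex and $C^3$ on $(0,\pi)$, so that $r_1,r_2,s$ are defined there and (\ref{eq: codazzi mainardi}) applies; this regularity is supplied by the well-posedness of the linear parabolic equation (\ref{eq: evolution of support function}).
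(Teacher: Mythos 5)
Your proposal is correct: the closed equation $\partial_t s = b\,s'' + a\cot\theta\,s' + (a+b)(1-2\csc^2\theta)s$ and the conjugation identity $\sin^{n+2}\theta\,\mathscr{L}^n_n(\sin^{-(n+2)}\theta\,\cdot) = \partial_\theta^2 - (2n+3)\cot\theta\,\partial_\theta + 4(n+1)\csc^2\theta - 2(n+1)$ both check out, and using the derived Codazzi--Mainardi equation to eliminate $r_1$ is equivalent to the paper's use of the quadrature formula (\ref{eq: support function from quadratures}). This is essentially the same (very terse) argument the paper gives, just carried out in full detail.
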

\begin{proof}
Differentiating equation (\ref{eq: support function from quadratures}) twice gives a relationship between $s$ and the derivatives of $r$.  The claim follows after inserting this relationship into equation (\ref{eq: evolution of support function}) and performing some algebraic manipulation.
\end{proof}
\begin{remark}\label{rem: hopf spheres are stationary}
As expected the astigmatism of the appropriate linear Hopf sphere is stationary under the flow since 
\[
\frac{s_\text{\tiny{Hopf}}}{\sin^{n+2}\theta}=\sin^n\theta\in \text{Ker }\mathscr{L}^n_n.
\]
\end{remark}
\vspace{0.1in}

A crucial ingredient for solving equation (\ref{eq: s/sin evolution}) in the case $n\in\mathbb{N}$, is that the eigenfunctions of $\mathscr{L}^n_n$ are the associated Legendre polynomials. The associated Legendre polynomials, $\tp^n_m(\cos\theta)$, are polynomials in sine and cosine and form an orthogonal basis of $C^0[0,\pi]$. In particular they span the higher index terms ($l\geq n$) of the following astigmatism decomposition for smooth surfaces:
\begin{align}\label{eq: integer astigmatism decomposition}
    s&=\sum\limits^\infty_{l=0}(a_l+b_l\cos \theta)\sin^{2l+2}\theta.
\end{align}

 In \cite{gk21} this decomposition enabled the flow to be solved analytically when $n\in\mathbb{N}$ by projecting the flow equation (\ref{eq: s/sin evolution}) into each eigenbasis of $\mathscr{L}^n_n$ and solving for the time dependency of the coefficients $a_l(t)$,$b_l(t)$. 
 
 The term with coefficient $a_n$ is the astigmatism of the linear Hopf sphere with umbilic slope $2n+3$, therefore in the case of convergence of $s$ to a linear Hopf sphere, $b_l \to 0$ for all $l \in \mathbb{N}_0$ and $a_l \to 0$ for all $l \in \mathbb{N}_0\backslash\{n\}$ as $t \to \infty$, leaving only the linear Hopf term. In the case of convergence to a round sphere, $a_n=0$.
 
 We try to emulate the above argument in this paper for $n\notin \mathbb{N}$. In the non-integer case, the eigenfunctions of $\mathscr{L}^n_n$ are no longer trigonometric polynomials and are not terms in the series expansion (\ref{eq: integer astigmatism decomposition}). Furthermore it is no longer clear if the non-integer Legendre functions are orthogonal or form a basis. To address this we will write the Legendre operator in its Sturm-Liouville form
 \begin{equation}\label{eq: Legendre Operator}
     \mathscr{L}^\mu_\nu=\frac{1}{\sin\theta}\left[\frac{\partial}{\partial\theta}\left(\sin\theta\frac{\partial}{\partial\theta}\right)+\nu(\nu+1)\sin\theta-\frac{\mu^2}{\sin\theta}\right]
 \end{equation}
 and show using Singular Sturm-Liouville theory that we can find an orthogonal basis in which the surfaces astigmatism can be decomposed.
\vspace{0.1in}

\section{Singular Sturm-Liouville Operators} \label{ch: Singular Sturm-Liouville Operators}

In this section we summarize the theory of singular Sturm-Liouville problems following \cite{zettl}. In Section \ref{ch: Proof of The Main Theorem} the theory will be used to deduce the existence of an orthogonal eigenbasis for the non-integer Legendre operator associated with the linear Hopf flow.

Consider the general Sturm-Liouville operator
\begin{equation}\label{eq: General SL operator}
    T=-\frac{1}{w(x)}\left[\dv{}{x}\left(p(x) \dv{}{x}\right)+q(x)\right],
\end{equation}
where $1/p,q,w \in L_{loc}\big((a,b);\mathbb{R}\big)$ are locally Lebesgue integrable real-valued functions on the interval $(a,b)$ and $w>0$. Operators of this form are called \textit{singular Sturm-Liouville operators}. The Legendre operator discussed in Section \ref{ch: Linear Hopf Flow} is an example of such. 

View $T$ as a linear operator on $L^2\big((a,b),w(x)dx;\mathbb{C}\big)$, the Hilbert space of complex-valued square integrable functions with weight $w$, denoted simply by $L^2_{w}(a,b)$. Under the standard inner-product of the $\lw$ spaces
\[
\langle f,g\rangle_w=\int^b_a f(x)\cdot\overline{g(x)}\cdot w(x) dx,
\]
the operator $T$ satisfies the so-called Greens formula
\begin{equation}\label{eq: Greens formula}
    \langle Tf,g\rangle_w=p(x)\left(f(x)\overline{g'(x)}-f'(x)\overline{g(x)}\right)\bigg|^{x=b^-}_{x=a^+}+ \langle f,Tg\rangle_w,
\end{equation}
where the evaluation of the boundary term is to be understood as a limit. Greens formula allows us to investigate the symmetry of $T$ in $\lw$ so long as the functions $f$ and $g$ are chosen to be such that the terms in (\ref{eq: Greens formula}) are well defined. For this purpose the \textit{maximal domain} is introduced:
\begin{equation*}
    \dmax=\big \{f \in \lw : f,pf'\in \text{AC}_{\text{loc}}(a,b), \enskip Tf\in \lw\big \},
\end{equation*}
 where $\text{AC}_{\text{loc}}(a,b)$ is the space of functions which are absolutely continuous on all compact intervals of $(a,b)$. 
 
 The requirement that $f,pf'\in \text{AC}_{\text{loc}}(a,b)$, is enough to ensure that $f$, $pf'$ are differentiable almost everywhere and their derivatives are Lebesgue integrable, which gives meaning to equation (\ref{eq: Greens formula}). 
 
 In the case that the coefficient $p$ satisfies $1/p \in \text{AC}_{\text{loc}}(a,b)$, the description of $\dmax$ simplifies to
\begin{equation}\label{eq: simp dmax}
    \dmax=\big \{f \in \lw : f'\in \text{AC}_{\text{loc}}(a,b), \enskip Tf\in \lw\big \},
\end{equation}
in particular the elements of $\dmax$ must be $C^1(a,b)$ and have second derivative a.e. Such is the case with the Legendre operator (\ref{eq: Legendre Operator}).

Sturm-Liouville operators often come supplied with boundary conditions as to make the boundary term in equation (\ref{eq: Greens formula}) vanish, i.e.
\begin{align}
    \langle Tf,g\rangle_w=\langle f,Tg\rangle_w.
\end{align}
These boundary conditions constitute part of $T$'s domain of definition, which naturally must be a subset of $\dmax$. 

In non-singular Sturm-Liouville theory\footnote{which requires the stronger condition $1/p,q,w$ are Lebesgue integrable over $(a,b)$} the well-known boundary conditions
\begin{align}\label{eq: regular sl boundary conditions}
&\alpha_1f(a)+\alpha_2p(a)f'(a)=0, &\beta_1f(b)+\beta_2p(b)f'(b)=0, && \alpha_1,\alpha_2,\beta_1,\beta_2 \in \mathbb{C},
\end{align}
define a domain $\dsa$ for $T$ which make $T|_{\dsa}$ self adjoint \cite{naimark68}. It is for this reason $\dsa$ will be referred to as a self adjoint domain for $T$.

In singular problems however, the quantities $f(x)$ and $p(x)f'(x)$ may not exist as $x\to a$ or $b$, even if $f\in \dmax$. Therefore boundary conditions such as (\ref{eq: regular sl boundary conditions}) are not appropriate.  To facilitate the description of boundary conditions for singular problems the \textit{Lagrange bracket}
\begin{equation}\label{eq: lagrange bracket}
    [f,g]_p(x)=p(x)\left(f(x)\overline{g'(x)}-f'(x)\overline{g(x)}\right),
\end{equation}
is introduced. Unlike the terms in boundary condition (\ref{eq: regular sl boundary conditions}), the Lagrange bracket $[f,g]_p(x)$ \textit{is} finite in the limits $x\to a,b$ so long as $f,g\in\dmax$. 

In order to give appropriate boundary conditions in the singular case, we first give a definition.
\vspace{0.1in}

\begin{Def}\label{def: LC/LP}
Given a singular Sturm-Liouville operator $T$, we say $T$ is \textit{limit-circle} (LC) at $x=a$ if for a given $\chi \in \mathbb{C}$ all solutions of the eigenvalue equation
\[
Ty=\chi y,
\]
are in $L^2_w(a,c)$ for some $c\in(a,b)$. Otherwise we say $T$ is \textit{limit-point} (LP) at $a$.

Similarly we say $T$ is LC at $x=b$ if correspondingly $y\in L^2_w(c,b)$ and LP at $b$ otherwise.

$T$ is said to be LC(LP) if it is LC(LP) at both $a$ and $b$.
\end{Def}
\vspace{0.1in}

\begin{remark}
$T$ being LC or LP is independent of $\chi \in \mathbb{C}$ \cite{zettl}.
\end{remark}
\vspace{0.1in}

The next theorem states the parallel of boundary condition (\ref{eq: regular sl boundary conditions}) for LC Sturm-Liouville operators.
\vspace{0.1in}

\begin{Thm}\cite{zettl}\label{thm: Zettle bc}
Let $T$ be a LC Sturm-Liouville operator and $\eta,\psi$ be real valued functions in $D_{\text{max}}$ such that $[\eta,\psi]_p(a)=1$ and $[\eta,\psi]_p(b)=1$. Consider the separated boundary condition
\begin{align}\label{eq: genral seperated S.A. B.C. at a }
    &\alpha_1 [u,\eta]_p(a)-\alpha_2 [u,\psi]_p(a)=0 &(\alpha_1,\alpha_2)\in\mathbb{R}^2\backslash \{0\}\\
    \label{eq: genral seperated S.A. B.C. at b }
    &\beta_1 [u,\eta]_p(b)-\beta_2 [u,\psi]_p(b)=0 &(\beta_1,\beta_2)\in\mathbb{R}^2\backslash \{0\},
\end{align}
where
\begin{align*}
&[u,f]_p(a)=\lim_{x\to a^+}[u,f]_p(x) & [u,f]_p(b)=\lim_{x\to b^-}[u,f]_p(x).
\end{align*}
Then given $\alpha_i$, $\beta_i$, $i=1,2$ as above, the domain 
\[
\dsa=\{y\in D_{\text{max}} : \text{equations(\ref{eq: genral seperated S.A. B.C. at a })-(\ref{eq: genral seperated S.A. B.C. at b }) hold}\},
\]
is a self adjoint-domain for $T$, i.e. $T|_{\dsa}$ is a self-adjoint operator.
\end{Thm}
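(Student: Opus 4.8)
The plan is to reduce the claim to the Glazman--Krein--Naimark (GKN) characterisation of self-adjoint extensions of symmetric operators; the statement is classical and a detailed treatment is in \cite{zettl}, so I only sketch the structure. Write $T_{\mathrm{min}}$ for the closure of $T$ acting on the smooth, compactly supported functions inside $\dmax$. Standard Sturm--Liouville theory gives that $T_{\mathrm{min}}$ is a closed symmetric operator with $T_{\mathrm{min}}^{*}=T|_{\dmax}$, that $\dom{T_{\mathrm{min}}}$ is precisely the set of $u\in\dmax$ with $[u,f]_{p}(a)=[u,f]_{p}(b)=0$ for every $f\in\dmax$, and that Green's formula (\ref{eq: Greens formula}) rewrites, for $u,v\in\dmax$, as
\[
\langle Tu,v\rangle_{w}-\langle u,Tv\rangle_{w}=[u,v]_{p}(b)-[u,v]_{p}(a),
\]
both terms on the right being finite precisely because $u,v\in\dmax$. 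The first step is the Weyl-theoretic input: since $T$ is LC at $a$ and at $b$, every solution of $Ty=\chi y$ is square integrable near each endpoint, and the classical count of such solutions for $\chi=\pm i$ forces the deficiency indices of $T_{\mathrm{min}}$ to be $(2,2)$, equivalently $\dim\big(\dmax/\dom{T_{\mathrm{min}}}\big)=4$.

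By the GKN theorem, a subspace $D$ with $\dom{T_{\mathrm{min}}}\subseteq D\subseteq\dmax$ is the domain of a self-adjoint extension of $T_{\mathrm{min}}$ exactly when the boundary form $[u,v]_{p}(b)-[u,v]_{p}(a)$ vanishes for all $u,v\in D$ and $\dim\big(D/\dom{T_{\mathrm{min}}}\big)=2$. To verify both for $\dsa$ I would use the Pl\"ucker-type identity obtained by expanding the definition (\ref{eq: lagrange bracket}) of the Lagrange bracket: for $u,v,\eta,\psi\in\dmax$,
\[
[\eta,\psi]_{p}(a)\,[u,v]_{p}(a)=[u,\eta]_{p}(a)\,\overline{[v,\psi]_{p}(a)}-[u,\psi]_{p}(a)\,\overline{[v,\eta]_{p}(a)},
\]
and likewise at $b$. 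Taking $u\in\{\eta,\psi\}$ and using $[\eta,\psi]_{p}(a)=1$ shows the two linear functionals $u\mapsto[u,\eta]_{p}(a)$ and $u\mapsto[u,\psi]_{p}(a)$ are linearly independent on $\dmax$; combined with the analogous pair at $b$ (which depend only on the behaviour of $u$ near $b$, hence are independent of the $a$-functionals) this yields four independent functionals on $\dmax$ annihilating $\dom{T_{\mathrm{min}}}$, i.e.\ a basis of $\big(\dmax/\dom{T_{\mathrm{min}}}\big)^{*}$. Each of the separated conditions (\ref{eq: genral seperated S.A. B.C. at a })--(\ref{eq: genral seperated S.A. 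B.C. at b }) is thus a single nontrivial linear restriction, giving $\dim\big(\dsa/\dom{T_{\mathrm{min}}}\big)=4-1-1=2$.

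It remains to check that the boundary form vanishes on $\dsa$. The cross terms between the two endpoints vanish trivially, so fix $u,v\in\dsa$ and work at $a$: both satisfy $\alpha_{1}[\cdot,\eta]_{p}(a)=\alpha_{2}[\cdot,\psi]_{p}(a)$ with the \emph{same real} pair $(\alpha_{1},\alpha_{2})\neq(0,0)$. Substituting this relation, together with its conjugate (which is the same relation since $\alpha_{1},\alpha_{2}\in\mathbb{R}$), into the identity above collapses its right-hand side to $0$, so $[u,v]_{p}(a)=0$, and identically $[u,v]_{p}(b)=0$. Hence $\dsa$ satisfies the GKN criterion and $T|_{\dsa}$ is self-adjoint. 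The main obstacle is entirely the machinery invoked in the first step --- that the LC hypothesis at both endpoints yields $\dim\big(\dmax/\dom{T_{\mathrm{min}}}\big)=4$, that the endpoint brackets are well defined on $\dmax$, and that these brackets separate $\dmax/\dom{T_{\mathrm{min}}}$; once that standard input is granted, self-adjointness of $\dsa$ reduces to the short bracket computation above.
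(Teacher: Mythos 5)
The paper offers no proof of this theorem --- it is imported verbatim from the cited reference \cite{zettl} --- so there is nothing internal to compare against; judged on its own terms, your sketch is a correct outline of the standard Glazman--Krein--Naimark argument used there: LC at both endpoints gives deficiency indices $(2,2)$, the Pl\"ucker-type factorisation of the Lagrange bracket (which checks out for real $\eta,\psi$) shows the boundary form vanishes on $\dsa$ and that each separated condition cuts the quotient $\dmax/\dom{T_{\mathrm{min}}}$ by exactly one dimension, and a symmetric restriction of codimension $2$ in $\dmax$ is then self-adjoint. The only ingredient you gesture at rather than justify is the independence of the $a$-functionals from the $b$-functionals, which requires Naimark's patching lemma, but you correctly flag this as part of the standard input being granted.
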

\vspace{0.1in}

In addition
\vspace{0.1in}
\begin{Prop}\label{prop: compact res, simple eigval}\cite{Teschl}
If $T$ is a LC Sturm-Liouville operator then $T$ has a compact resolvent.
\end{Prop}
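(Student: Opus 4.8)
The plan is to produce a self-adjoint restriction $T|_{\dsa}$ by Theorem \ref{thm: Zettle bc}, realise one of its resolvents as an integral operator whose kernel is square integrable against the weight $w\otimes w$ (hence Hilbert--Schmidt, hence compact), and then propagate compactness to every resolvent via the first resolvent identity. The hypothesis that $T$ is LC \emph{at both endpoints} (Definition \ref{def: LC/LP}) is used at exactly one point: it forces the two solutions built into the Green's function to be globally in $\lw$, not just square integrable near one endpoint.

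Concretely, first fix $\dsa$ using the boundary conditions (\ref{eq: genral seperated S.A. B.C. at a })--(\ref{eq: genral seperated S.A. B.C. at b }) and choose $\chi\in\mathbb{C}\setminus\mathbb{R}$, so $\chi$ lies in the resolvent set of $T|_{\dsa}$. Using LC at $a$, pick a nontrivial solution $u_a$ of $Tu_a=\chi u_a$ obeying (\ref{eq: genral seperated S.A. B.C. at a }), and similarly $u_b$ obeying (\ref{eq: genral seperated S.A. B.C. at b }); since $\chi$ is not an eigenvalue, $u_a$ and $u_b$ are linearly independent, so the modified Wronskian $W=p\,(u_a u_b'-u_a' u_b)$ is a nonzero constant. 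Variation of parameters then writes the unique $\dsa$-solution of $(T-\chi)v=f$ as
\[
v(x)=\int_a^b G(x,y)\,f(y)\,w(y)\,dy,\qquad
G(x,y)=\frac{1}{W}
\begin{cases} u_a(x)\,u_b(y), & x\le y,\\[2pt] u_a(y)\,u_b(x), & y\le x,\end{cases}
\]
so $(T-\chi)^{-1}$ is the integral operator with kernel $G$; one checks it maps into $\dsa$ using Green's formula (\ref{eq: Greens formula}) and the boundary conditions encoded in $u_a,u_b$.

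The key step is the estimate $\iint_{(a,b)^2}|G(x,y)|^2\,w(x)\,w(y)\,dx\,dy<\infty$. Here the full LC assumption enters: $u_a$ is continuous on $(a,b)$, lies in $L^2_w$ near $a$ by LC at $a$, and lies in $L^2_w$ near $b$ because $T$ is \emph{also} LC at $b$ (so all solutions of $Tu=\chi u$ are square integrable there); hence $u_a\in\lw$, and likewise $u_b\in\lw$. Splitting the double integral over $\{x\le y\}$ and $\{y\le x\}$ and bounding each inner integral by $\|u_a\|_w^2$ or $\|u_b\|_w^2$ gives $\|G\|_{\mathrm{HS}}^2\le |W|^{-2}\|u_a\|_w^2\,\|u_b\|_w^2<\infty$. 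Thus $(T-\chi)^{-1}$ is Hilbert--Schmidt, in particular compact, and the first resolvent identity $(T-z)^{-1}=(T-\chi)^{-1}+(z-\chi)(T-z)^{-1}(T-\chi)^{-1}$ then shows $(T-z)^{-1}$ is compact for every $z$ in the resolvent set. I expect the main obstacle to be precisely this Hilbert--Schmidt bound: it is the step that collapses when $T$ is LP at an endpoint, since then the solution adapted to that endpoint need not be globally in $\lw$. The remaining work — justifying the explicit kernel $G$ and verifying that the integral operator lands in $\dsa$ — is routine Sturm--Liouville bookkeeping.
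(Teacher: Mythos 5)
The paper offers no proof of this proposition, citing it directly from Teschl, and your argument is precisely the standard one given there: realise $(T-\chi)^{-1}$ as the Green's-function integral operator built from endpoint-adapted solutions $u_a,u_b$, use LC at \emph{both} endpoints to place $u_a,u_b\in\lw$ globally so the kernel is Hilbert--Schmidt, and propagate compactness by the first resolvent identity. The argument is correct (up to an immaterial factor of $2$ in the Hilbert--Schmidt bound from the two regions $x\le y$ and $y\le x$), and it correctly isolates where the LC hypothesis is indispensable.
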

\vspace{0.1in}

Together Theorem \ref{thm: Zettle bc} and Proposition \ref{prop: compact res, simple eigval} are enough to give us an orthonormal basis due to the following spectral theorem:

\vspace{0.1in}
\begin{Thm}[Spectral Theorem]\cite{Taylor}\label{thm: spectral theorem}
Let $\mathcal{H}$ be a separable complex Hilbert space with inner product $\langle\cdot,\cdot\rangle$ and let $T:\dom{T}\subseteq \mathcal{H} \to \mathcal{H}$ be a linear, self-adjoint operator on $\mathcal{H}$ with compact resolvent. Then, there exists a sequence $(\lambda_n)_n \subset \mathbb{R}$ and a complete orthonormal basis $(e_n)_n$ of $\mathcal{H}$ with $e_n \in \dom{T}$ for all $n\in \mathbb{N}$ such that

\begin{enumerate}[label={\upshape(\arabic*)}]
    \item $Te_n=\lambda_ne_n$,\\
    \item $\dom{T}=\{x\in\mathcal{H} | (\lambda_n\langle x, e_n \rangle )_n \in \ell^2\}$,\\
    \item $Tx=\sum\limits_{n=1}^\infty \lambda_n\langle x, e_n \rangle e_n$ for all $x \in \dom{T}.$
\end{enumerate}
\end{Thm}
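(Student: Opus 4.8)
The final statement is the classical spectral theorem for a self-adjoint operator with compact resolvent, and the natural plan is to reduce it to the Hilbert--Schmidt spectral theorem for compact self-adjoint operators by inverting $T$ after a real shift. First I would note that since $T$ is self-adjoint its spectrum is a closed subset of $\mathbb{R}$, and since $T$ has compact resolvent the spectrum is purely discrete, so in particular $\rho(T)\cap\mathbb{R}\neq\varnothing$. Fix a real $z_0\in\rho(T)$ and set $R=(T-z_0)^{-1}$. Then $R$ is bounded, self-adjoint (being the inverse of the self-adjoint operator $T-z_0$), and compact by hypothesis; it is also injective with $\operatorname{Ran}(R)=\dom{T}$.

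Next I would apply the spectral theorem for compact self-adjoint operators to $R$: there is a complete orthonormal basis $(e_n)_n$ of $\mathcal{H}$ with $Re_n=\mu_n e_n$, $\mu_n\in\mathbb{R}$, $\mu_n\to 0$. Injectivity of $R$ forces $\mu_n\neq 0$ for all $n$, since any eigenvector for eigenvalue $0$ would lie in $\ker R=\{0\}$; this is where compactness is genuinely used, as a general bounded self-adjoint operator need not have eigenvectors at all. From $Re_n=\mu_n e_n$ with $\mu_n\neq 0$ we get $e_n=R(\mu_n^{-1}e_n)\in\operatorname{Ran}(R)=\dom{T}$, and applying $T-z_0$ gives $(T-z_0)e_n=\mu_n^{-1}e_n$, i.e. $Te_n=\lambda_n e_n$ with $\lambda_n:=z_0+\mu_n^{-1}\in\mathbb{R}$. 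This establishes (1), and note $\lvert\lambda_n\rvert\to\infty$.

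For (2) and (3), expand an arbitrary $x\in\mathcal{H}$ as $x=\sum_n\langle x,e_n\rangle e_n$. If $x\in\dom{T}$, then $x=Ry$ with $y=(T-z_0)x$, and comparing Fourier coefficients against $Re_n=\mu_n e_n$ yields $\langle y,e_n\rangle=\mu_n^{-1}\langle x,e_n\rangle$; since $y\in\mathcal{H}$, $\sum_n\lvert\mu_n^{-1}\langle x,e_n\rangle\rvert^2<\infty$, and because $\lambda_n=z_0+\mu_n^{-1}$ differs from $\mu_n^{-1}$ by the bounded amount $z_0$ while $\mu_n^{-1}\to\pm\infty$, this is equivalent to $(\lambda_n\langle x,e_n\rangle)_n\in\ell^2$. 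Conversely, if $(\lambda_n\langle x,e_n\rangle)_n\in\ell^2$, then $y:=\sum_n\mu_n^{-1}\langle x,e_n\rangle e_n$ converges in $\mathcal{H}$ and $Ry=\sum_n\langle x,e_n\rangle e_n=x$, so $x\in\operatorname{Ran}(R)=\dom{T}$; applying $T-z_0$ and using that $T$ is closed (so that it passes through the limit of the partial sums, whose images also converge) gives $Tx=\sum_n\lambda_n\langle x,e_n\rangle e_n$, which is (3). Finally, separability of $\mathcal{H}$ guarantees the basis $(e_n)_n$ is genuinely a countable sequence.

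The only delicate points are the two places where the unbounded operator must be handled indirectly: verifying $\rho(T)\cap\mathbb{R}\neq\varnothing$ so that $R$ is self-adjoint rather than merely normal --- an equally valid alternative is to take $z_0=i$ and invoke the spectral theorem for compact \emph{normal} operators --- and the termwise application of $T$ in the last step, which must be justified via closedness of $T$ (equivalently, via the resolvent identity $x=Ry$) rather than naively differentiating the series.
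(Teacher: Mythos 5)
Your argument is correct and complete; the paper itself offers no proof of this statement, quoting it as a standard result from the cited reference \cite{Taylor}, whose proof is exactly your reduction to the Hilbert--Schmidt theorem via the resolvent $R=(T-z_0)^{-1}$. The only point worth flagging is the mild circularity in invoking discreteness of the spectrum to produce a real $z_0\in\rho(T)$ before the theorem is established, but you already supply the clean fix: either take $z_0=i$ and apply the spectral theorem for compact \emph{normal} operators, or note directly that $\sigma(T)\subset\mathbb{R}$ is countable because it is the preimage of the countable spectrum of the compact operator $(T-i)^{-1}$ under $\lambda\mapsto(\lambda-i)^{-1}$, so a real resolvent point exists.
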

\vspace{0.1in}

\section{Proof of Theorem \ref{thm: main thm 2}} \label{ch: Proof of The Main Theorem}

We will solve equation (\ref{eq: s/sin evolution}) in terms of an eigenbasis expansion of the operator $\mathscr{L}^n_n$. Theorem \ref{thm: main thm 2} will then follow by the asymptotic behaviour of the solution as $t \to \infty$. 

The proof is organised into three parts: Firstly we show the existence of the appropriate eigenbasis using the theory in Section \ref{ch: Singular Sturm-Liouville Operators}. Secondly we determine the basis explicitly as the Legendre functions $\{\tp^{-n}_{n+m}(\cos\theta)\}^\infty_{m=0}$ and derive the corresponding expansions for $s$, $r_1$ and $r$. Finally, we use this basis to solve the time evolution problem
\begin{align}\label{eq: time evolution problem in lsin}
    \begin{cases}
    \partial_t u=b \cdot \mathscr{L}^n_n u, & [0,\infty) \times [0,\pi]\\
    u(t,\cdot)\in \dsa, & t\in [0,\infty)\\
    u=u_0, & \{t=0\} \times [0,\pi].
    \end{cases}
\end{align}
The function space $\dsa$ is a self adjoint domain for $\mathscr{L}^n_n$ and plays the role of an effective a parabolic boundary condition in (\ref{eq: time evolution problem in lsin}). If $s$ is the astigmatism of a rotationally symmetric surface, making the substitution $u=s/\sin^{n+2}\theta$ turns the time evolution problem (\ref{eq: time evolution problem in lsin}) into one describing the evolution of a surfaces astigmatism under the linear Hopf flow, i.e. equation (\ref{eq: s/sin evolution}). The solution $s(t,\theta)$ is then integrated for the support function $r(t,\theta)$ and both are shown to exhibit the asymptotic behaviour as $t \to \infty$
\begin{align}\label{eq: asymptotic behaviour of geoemtric quant.}
 &s(t,\theta)\sim \widetilde{\gamma}\cdot s_\text{\tiny{Hopf}}, &r(t,\theta)\sim r_\text{\tiny{Sphere}}+\widetilde{\gamma}\cdot r_\text{\tiny{Hopf}},
\end{align}
where $r_\text{\tiny{Sphere}}$ is the support function of a sphere with radius $-\frac{c}{a+b}$ and $\widetilde{\gamma}$ is the signed distance between the focal points of the initial surface at $\theta=0$ and $\pi$.

\subsection{Existence of the Eigenbasis and The Self-Adjoint Domain}\hspace{1cm}\\

First we remark for which values of $n$ that $\mathscr{L}^n_n$ is LC.
\begin{Prop}\label{prop: L^n_n is LC}
$\mathscr{L}^n_n$ is LC if and only if $n\in(-1,1)$.
\end{Prop}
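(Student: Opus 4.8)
The plan is to analyze the two endpoints $\theta = 0$ and $\theta = \pi$ separately (by the reflection symmetry $\theta \mapsto \pi - \theta$ they are equivalent, so it suffices to treat $\theta = 0$), and for each to decide whether \emph{both} solutions of the eigenvalue equation $\mathscr{L}^n_n y = \chi y$ are square integrable with respect to the weight $w = \sin\theta$ near that endpoint. Since the LC/LP dichotomy is independent of $\chi$ (Remark following Definition \ref{def: LC/LP}), I would take the convenient value $\chi = n(n+1)$, so that the equation becomes the associated Legendre equation with degree $n$ and order $n$; its solution space near $\theta = 0$ is spanned by two Frobenius solutions behaving like $\sin^{n}\theta$ and $\sin^{-n}\theta$ respectively (when $n \neq 0$; a logarithmic solution appears when $n = 0$).

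First I would substitute $x = \cos\theta$ to put the equation in standard associated Legendre form, or equivalently perform a local Frobenius analysis directly in $\theta$ near $\theta = 0$. The indicial equation for $\mathscr{L}^n_n y = \chi y$ at the regular singular point $\theta = 0$ has roots $\pm n$, giving leading behaviours $y_+ \sim \sin^{n}\theta$ and $y_- \sim \sin^{-n}\theta$ (with the usual modification to $y_- \sim \sin^{n}\theta \cdot \ln(1/\sin\theta)$-type correction when $2n \in \mathbb{Z}$, and a genuine logarithm when $n = 0$). Next I would test square integrability against the weight: $\int_0 |y|^2 \sin\theta\, d\theta$ converges for $y \sim \sin^{\pm n}\theta$ precisely when $2(\pm n) + 1 > -1$, i.e. when $\pm n > -1$. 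The solution $y_+ \sim \sin^{n}\theta$ is in $L^2_{\sin\theta}$ near $0$ iff $n > -1$; the solution $y_- \sim \sin^{-n}\theta$ is in $L^2_{\sin\theta}$ near $0$ iff $-n > -1$, i.e. $n < 1$. Hence both solutions are square integrable near $\theta = 0$ iff $-1 < n < 1$, and the logarithmic case $n = 0$ is easily checked to still be $L^2$. By the reflection symmetry the identical conclusion holds at $\theta = \pi$. Therefore $\mathscr{L}^n_n$ is LC at both endpoints iff $n \in (-1,1)$, which is the claim.

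I would phrase the final write-up by first citing the standard fact (e.g. from \cite{zettl} or a direct Frobenius computation) that the two local solutions have the stated leading asymptotics, then doing the short weighted-$L^2$ integrability check, and finally invoking reflection for the endpoint $\pi$. The main obstacle, such as it is, is purely bookkeeping: handling the resonant cases $2n \in \mathbb{Z}$ (in particular $n = 0$, where one solution acquires a logarithm) carefully enough to confirm that the logarithmic factor does not spoil square integrability — it does not, since $\int_0 (\ln(1/\sin\theta))^2 \sin\theta\, d\theta < \infty$. Everything else is a routine indicial-equation and convergence-of-integral computation, and no deep input beyond the definitions in Section \ref{ch: Singular Sturm-Liouville Operators} is needed.
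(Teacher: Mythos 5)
Your proposal is correct and follows essentially the same route as the paper: the paper also exploits the $\chi$-independence of the LC/LP dichotomy, picks a convenient eigenvalue so that the two independent solutions are explicitly $\cot^{\pm n}(\theta/2)$ (behaving like $\sin^{\mp n}\theta$ at the endpoints, exactly your indicial roots), checks weighted square-integrability to get $-1<n<1$, and treats the logarithmic $n=0$ case separately. The only cosmetic difference is that the paper writes the solutions in closed form and evaluates the integral exactly, whereas you argue via local Frobenius asymptotics.
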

\begin{proof}
To check if $\mathscr{L}^n_n$ is LC, solve the eigenvalue problem $\mathscr{L}^n_nu=\chi u$. Recall when checking LC/LP, we are free to choose $\chi$ as we please. If we set $\chi:=-n(n+1)$ we must solve the problem
$$
\frac{1}{\sin \theta}\dv{}{\theta}\left(\sin \theta \dv{u}{\theta}\right)-\frac{n^2}{\sin^2\theta}u=0.
$$
If we can show that any two linearly independent solutions are square integrable (with weight $w=\sin \theta$), it follows that every solution is square integrable by the triangle inequality.
First assume $n=0$. Then the eigenvalue problem is simply
\[
\frac{1}{\sin \theta}\dv{}{\theta}\left(\sin \theta \dv{u}{\theta}\right)=0,
\]
with two linearly independent solutions $\ln \cot \left(\frac{\theta}{2}\right)$ and a constant function. These are square integrable. Now assume $n\neq0$, then two linearly independent solutions are
$$
u_\pm=\cot^{\pm n}\left(\frac{\theta}{2}\right).
$$
We have then,
$$||u_\pm||^2_{\lsin}=\int^{\pi}_0 \cot^{\pm 2n}\left(\frac{\theta}{2} \right) \cdot \sin \theta d\theta=2\int^{\pi}_0 \left[\cos\left(\frac{\theta}{2}\right) \right]^{1\pm 2n}\cdot \left[\sin\left(\frac{\theta}{2}\right) \right]^{1\mp 2n} d\theta, $$
which is convergent if and only if $-1<n<1$ and therefore $\mathscr{L}^n_n$ is LC if and only if $n\in(-1,1)$.
\end{proof}
\vspace{0.1in}

For such values of $n$ we may now use Theorem \ref{thm: Zettle bc} to find self-adjoint domains for $\mathscr{L}^n_n$. Furthermore, as to make the convergence obvious,  we'd like the eigenbasis of $\mathscr{L}^n_n$ to contain explicitly the stationary solution to equation (\ref{eq: s/sin evolution}):
\[
\frac{s_\text{\tiny{Hopf}}}{\sin^{n+2}\theta},
\]
it is therefore necessary that $\frac{s_\text{\tiny{Hopf}}}{\sin^{n+2}\theta} \in \dsa$.
Finding such a self-adjoint domain will be the content of the next proposition.

\vspace{0.1in}

\begin{Prop}\label{prop: Self-Adjoint Domain for L^n_n}
The separated boundary condition\\
\begin{align}\label{eq: seperated boundary conditions for dsa.}
    &\lim\limits_{\theta \to 0}\left[ \sin^{2n+1}\theta \dv[]{}{\theta}\left(\frac{u}{\sin^n \theta}\right)\right]=0, & \lim\limits_{\theta \to \pi}\left[ \sin^{2n+1}\theta \dv[]{}{\theta}\left(\frac{u}{\sin^n \theta}\right)\right]=0,
\end{align} \hspace{0.1cm}\\
generates a self-adjoint domain for the Legendre operator\\
\begin{equation}
D_{\text{S.A.}}=D_{\text{max}}\cap \left\{u \in \lsin: \text{ boundary condition }(\ref{eq: seperated boundary conditions for dsa.})\text{ holds.} \right\},
\end{equation}
where $\dmax$ is given by equation (\ref{eq: simp dmax}) for $(a,b)=(0,\pi)$ and $T=\mathscr{L}^n_n$. Furthermore $s_\text{\tiny{Hopf}}/\sin^{n+2}\theta \in \dsa$.
\end{Prop}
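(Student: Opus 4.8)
To apply Theorem \ref{thm: Zettle bc} I need two ingredients: first, a pair of real-valued functions $\eta,\psi\in\dmax$ with $[\eta,\psi]_p(0)=[\eta,\psi]_p(\pi)=1$ (here $p(\theta)=\sin\theta$, $w(\theta)=\sin\theta$); second, a verification that the boundary condition (\ref{eq: seperated boundary conditions for dsa.}) is exactly of the separated form (\ref{eq: genral seperated S.A. B.C. at a })--(\ref{eq: genral seperated S.A. B.C. at b }) for a suitable choice of $\alpha_i,\beta_i$. The natural candidates come from the proof of Proposition \ref{prop: L^n_n is LC}: the two linearly independent solutions of $\mathscr{L}^n_n u = -n(n+1)u$ are (for $n\neq 0$) $u_+ = \cot^n(\theta/2)$ and $u_-=\cot^{-n}(\theta/2)$, equivalently, up to constants, $\sin^n\theta$ and $\sin^n\theta\int^\theta \sin^{-2n-1}\!\vartheta\,d\vartheta$. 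I would set $\psi = \sin^n\theta$ (the stationary solution, suitably normalized near each endpoint) and take $\eta$ to be the second solution, normalized so that the Lagrange bracket equals $1$ at each endpoint; since both solutions lie in $\lsin$ by the LC property, both are in $\dmax$, and the Wronskian-type identity $p(f g' - f'g)=\text{const}$ for two solutions of the same eigenvalue equation makes the bracket a nonzero constant that can be scaled to $1$. For $n=0$ the analogous pair is $1$ and $\ln\cot(\theta/2)$.

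**Key steps.** (i) Verify $\psi=\sin^n\theta\in\dmax$ — it is smooth on $(0,\pi)$, lies in $\lsin$ for $n\in(-1,1)$, and $\mathscr{L}^n_n\psi = -n(n+1)\psi\in\lsin$; and that $\mathscr{L}^n_n\psi\in\ker$ shifted by a constant, hence in $\lsin$. (ii) Construct $\eta$ explicitly, e.g. $\eta(\theta)=\sin^n\theta\int_{\pi/2}^\theta \sin^{-2n-1}\vartheta\,d\vartheta$, check $\eta\in\lsin$ (this is where the constraint $n\in(-1,1)$ is again used: near $\theta=0$, $\eta\sim \text{const}\cdot\sin^{-n}\theta$ when $n\neq0$, which is square-integrable against $\sin\theta$ precisely when $n<1$), check $\mathscr{L}^n_n\eta\in\lsin$, and compute $[\eta,\psi]_p = \sin\theta(\eta\psi'-\eta'\psi)$, which by the Wronskian identity is the constant $-1$ (or $+1$ after relabelling); rescale to get $1$ at both endpoints. (iii) Compute $[u,\psi]_p(\theta) = \sin\theta\big(u\,\psi' - u'\,\psi\big) = -\sin\theta\,\psi^2\,\dv{}{\theta}\!\big(u/\psi\big) = -\sin^{2n+1}\theta\,\dv{}{\theta}\!\big(u/\sin^n\theta\big)$, which is (up to sign) exactly the quantity appearing in (\ref{eq: seperated boundary conditions for dsa.}). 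Hence the condition $\lim_{\theta\to 0}[\sin^{2n+1}\theta\,\dv{}{\theta}(u/\sin^n\theta)]=0$ is precisely $[u,\psi]_p(0)=0$, i.e.\ (\ref{eq: genral seperated S.A. B.C. at a }) with $(\alpha_1,\alpha_2)=(0,1)$, and similarly at $\pi$. Theorem \ref{thm: Zettle bc} then immediately yields that $\dsa$ is a self-adjoint domain. (iv) Finally, check $s_\text{\tiny{Hopf}}/\sin^{n+2}\theta = C_0\sin^n\theta = C_0\psi \in\dsa$: it lies in $\dmax$ by step (i), and $[C_0\psi,\psi]_p \equiv 0$ since the Lagrange bracket of a function with itself (up to a real scalar) vanishes identically, so both boundary conditions in (\ref{eq: seperated boundary conditions for dsa.}) hold trivially.

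**Main obstacle.** The routine-looking but genuinely delicate point is the behaviour of $\eta$ and its Lagrange brackets at the endpoints: one must confirm that $\eta\in\dmax$ (square-integrability of both $\eta$ and $\mathscr{L}^n_n\eta$ near $\theta=0,\pi$) and that the limits defining $[\eta,\psi]_p(0)$, $[\eta,\psi]_p(\pi)$, $[u,\psi]_p(0)$, $[u,\psi]_p(\pi)$ genuinely exist and are finite for all $u\in\dmax$ — this is guaranteed abstractly by the LC property (the Lagrange bracket of two $\dmax$ functions always has finite endpoint limits when $T$ is LC), but one should point to that fact explicitly. The case distinction $n=0$ versus $n\neq 0$ must be carried along since the second solution changes form, though both cases are handled by the same scheme. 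A secondary subtlety is the sign/normalization bookkeeping so that the bracket equals exactly $+1$ at \emph{both} endpoints as Theorem \ref{thm: Zettle bc} requires; this may force a different normalizing constant for $\eta$ near $0$ than near $\pi$, or equivalently a piecewise description, but since the two endpoint conditions are independent this causes no difficulty.
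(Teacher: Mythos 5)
Your proposal is correct and follows essentially the same route as the paper: both invoke Theorem \ref{thm: Zettle bc} with a pair of maximal-domain functions built from $\sin^{n}\theta$ and a $\sin^{-n}\theta$-type companion, identify the stated boundary condition as the vanishing of the Lagrange bracket $[u,\sin^{n}\theta]_{\sin\theta}$ at both endpoints, and note that $s_{\text{Hopf}}/\sin^{n+2}\theta=C\sin^{n}\theta$ satisfies it trivially. The only differences are cosmetic: the paper takes $\psi=\sin^{-n}\theta$ itself (normalised by $1/\sqrt{n}$) and pins down the coefficients by demanding $u_{\text{Hopf}}\in\dsa$, whereas your use of the genuine second kernel solution (giving a constant Wronskian bracket) and your separate treatment of $n=0$, where the paper's normalisation degenerates, are if anything slightly more careful.
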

\begin{proof}
The expression for $\dmax$ for the Legendre operator takes the form
\begin{equation*}\label{eq: dmax for L^n_n}
    \dmax=\big \{u \in \lsin : {\Large \textstyle \dv{u}{\theta}}\in \text{AC}_{\text{loc}}(0,\pi), \enskip \mathscr{L}^n_nu\in \lsin\big \}.
\end{equation*}
Theorem \ref{thm: Zettle bc} tells us that all self adjoint domains of the Legendre operator generated by separated boundary conditions are given by functions $u\in \dmax$ satisfying 
\[
\alpha_1[u,\eta]_{\sin\theta}(0)+\alpha_2[u,\psi]_{\sin\theta}(\pi)=0 \enskip \& \enskip \beta_1[u,\eta]_{\sin\theta}(0)+\beta_2[u,\psi]_{\sin\theta}(\pi)=0,
\]
 where $\left\{\eta,\psi\right\} \subset D_{\text{max}}$ and $[\eta,\psi]_{\sin\theta}(0)=[\eta,\psi]_{\sin\theta}(\pi)=1$, with $(\alpha_1,\alpha_2),(\beta_1,\beta_2)\in\mathbb{C}^2 \backslash\{0\}$.
 
Take
$(\eta,\psi)=\frac{1}{\sqrt{n}}\left(\sin^n\theta,\sin^{-n}\theta\right)$.
It is straightforward to check that this choice of $\eta$ and $\psi$ satisfy the above requirements. From Theorem \ref{thm: Zettle bc}, the boundary conditions with this choice of $\eta$ and $\psi$ are
\begin{align}
   \label{eq: full seperated bc 1}
   &\lim\limits_{\theta \to 0}\left[ \alpha_1 \sin^{2n+1}\theta \dv[]{}{\theta}\left(\frac{u}{\sin^n \theta}\right)+\frac{\alpha_2}{ \sin^{2n-1}\theta} \dv[]{}{\theta}\left(u\cdot \sin^n \theta\right)\right]=0,\\
   \label{eq: full seperated bc 2}
   &\lim\limits_{\theta \to \pi}\left[ \beta_1 \sin^{2n+1}\theta \dv[]{}{\theta}\left(\frac{u}{\sin^n \theta}\right)+\frac{\beta_2}{ \sin^{2n-1}\theta} \dv[]{}{\theta}\left(u\cdot \sin^n \theta\right)\right]=0.
\end{align}
If we let $u_\text{\tiny{Hopf}}=s_\text{\tiny{Hopf}}/\sin^{n+2}\theta$ then
\[
\sin^{2n+1}\theta \dv[]{}{\theta}\left(\frac{u_\text{\tiny{Hopf}}}{\sin^n \theta}\right)\equiv 0, \qquad \frac{1}{ \sin^{2n-1}\theta} \dv[]{}{\theta}\left(u_\text{\tiny{Hopf}}\cdot \sin^n \theta\right)=2nC\cos\theta,
\]
and so boundary conditions (\ref{eq: full seperated bc 1})-(\ref{eq: full seperated bc 2}) are satisfied by $u_\text{\tiny{Hopf}}$ only when $\alpha_2=\beta_2=0$. Therefore setting $\alpha_1=\beta_1=1$ and $\alpha_2=\beta_2=0$, gives the self adjoint domain
\[
\dsa=D_{\text{max}}\cap \left\{u \in \lsin: \lim\limits_{\theta \to 0,\pi}\left[ \sin^{2n+1}\theta \dv[]{}{\theta}\left(\frac{u}{\sin^n \theta}\right)\right]=0 \right\}.
\]
It is easily seen that $u_\text{\tiny{Hopf}} \in \dmax$ and so we are done.
\end{proof}

\vspace{0.1in}

Given the astigmatism of a surface, we may characterise when $s/\sin^{n+2}\theta \in \dsa$ in terms of the astigmatism.
\vspace{0.1in}

\begin{Prop}
If $s$ is the astigmatism of $\mathcal{S}\in\mathscr{W}$, then $s/\sin^{n+2}\theta \in \dsa$ if and only if {\upshape{(I)}} and {\upshape{(II)}} of Theorem \ref{thm: main thm 2} hold.
\end{Prop}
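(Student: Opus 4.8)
The plan is to unravel the definition $\dsa = \dmax \cap \{u : \text{boundary condition \eqref{eq: seperated boundary conditions for dsa.} holds}\}$ with $u = s/\sin^{n+2}\theta$ and show, term by term, that membership is equivalent to conditions (I) and (II). I would first treat the $\dmax$ part and then the boundary condition part.

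\textbf{The $\dmax$ part.} Recall $\dmax = \{u \in \lsin : u' \in \text{AC}_{\text{loc}}(0,\pi),\ \mathscr{L}^n_n u \in \lsin\}$. With $u = s/\sin^{n+2}\theta$, the requirement $u \in \lsin$ is exactly the first clause of (I), namely $s/\sin^{n+2}\theta \in \lsin$; the requirement $\mathscr{L}^n_n u \in \lsin$ is exactly the third clause of (I). For the middle condition I would observe that $u' \in \text{AC}_{\text{loc}}(0,\pi)$ is equivalent to $\tfrac{ds}{d\theta} \in \text{AC}_{\text{loc}}(0,\pi)$: away from $\theta = 0,\pi$ the factor $\sin^{-(n+2)}\theta$ and its derivatives are smooth and nonvanishing, so differentiating $u = s \cdot \sin^{-(n+2)}\theta$ and using the product and quotient rules shows $u'$ is a linear combination of $s$, $s'$ with smooth ($C^\infty$) coefficients on every compact subinterval of $(0,\pi)$; since $s \in \text{AC}_{\text{loc}}$ automatically (it is $C^1$, being the astigmatism of a $C^2$ surface, with $r_1,r_2 \in C^1$), the absolute continuity of $u'$ on compact subintervals is equivalent to that of $s'$. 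So the $\dmax$ membership is precisely clause (I).

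\textbf{The boundary condition part.} Here I would rewrite the bracket appearing in \eqref{eq: seperated boundary conditions for dsa.}, namely $\sin^{2n+1}\theta\, \dv{}{\theta}\!\big(u/\sin^n\theta\big)$, in terms of $s$. Since $u/\sin^n\theta = s/\sin^{2n+2}\theta$, this bracket equals $\sin^{2n+1}\theta\, \dv{}{\theta}\!\big(s/\sin^{2n+2}\theta\big)$, so the limits $\theta \to 0,\pi$ vanishing is verbatim condition (II). This gives the equivalence directly, once the $\dmax$ part is in place — assuming $u \in \dmax$, so that the Lagrange-bracket expressions are finite and the limits make sense, which is guaranteed by the general theory (the Lagrange bracket $[u,f]_{\sin\theta}(0)$, $[u,f]_{\sin\theta}(\pi)$ exist whenever $u,f \in \dmax$, and here $f = \eta$ or $\psi$ from the proof of Proposition \ref{prop: Self-Adjoint Domain for L^n_n}). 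Indeed the bracket in \eqref{eq: seperated boundary conditions for dsa.} is, up to the constant $\tfrac{1}{\sqrt n}$, exactly $[u,\psi]_{\sin\theta}$, which is why its limit exists.

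\textbf{Main obstacle.} The one point requiring genuine care is the equivalence $u' \in \text{AC}_{\text{loc}}(0,\pi) \iff s' \in \text{AC}_{\text{loc}}(0,\pi)$: one must check that multiplying by the smooth nonvanishing weight $\sin^{-(n+2)}\theta$ on compact subintervals of $(0,\pi)$ preserves, and reflects, local absolute continuity of the first derivative — this is where the restriction to the open interval $(0,\pi)$ (excluding the singular endpoints) is essential, and where one uses that a product of an $\text{AC}_{\text{loc}}$ function with a $C^1$ function is $\text{AC}_{\text{loc}}$, applied to $s'$ and the relevant weight factors. Everything else is bookkeeping: matching the three clauses of (I) to the three defining conditions of $\dmax$, and matching (II) to the boundary condition \eqref{eq: seperated boundary conditions for dsa.} after the algebraic simplification $u/\sin^n\theta = s/\sin^{2n+2}\theta$.
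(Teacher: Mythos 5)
Your proof is correct and takes essentially the same route as the paper, whose entire argument is the one-line observation that under the substitution $u=s/\sin^{n+2}\theta$ the maximal-domain conditions and the boundary condition (\ref{eq: seperated boundary conditions for dsa.}) become (I) and (II) respectively; you have simply filled in the bookkeeping, including the algebraic identity $u/\sin^n\theta = s/\sin^{2n+2}\theta$ that makes (II) verbatim. One small slip: for a $C^2$ surface the paper only guarantees the radii, and hence $s$, are $C^0$, so $s$ is not automatically $C^1$ as you claim in passing --- but this does not affect your argument, since the hypothesis $\dv{s}{\theta}\in\text{AC}_{\text{loc}}(0,\pi)$ already forces $s\in C^1(0,\pi)$, and the equivalence with $u'\in\text{AC}_{\text{loc}}(0,\pi)$ goes through symmetrically as you describe.
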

\begin{proof}
We make the substitution $u=s/\sin^{n+2}\theta$, the maximal domain conditions (\ref{eq: dmax for L^n_n}) and boundary condition (\ref{eq: seperated boundary conditions for dsa.}) become (I) and (II) respectively.
\end{proof}
\vspace{0.1in}

Under stronger assumptions on the surfaces regularity, conditions (I) and (II) can be replaced by the following sufficient conditions:

\begin{Prop}\label{lem: s.a. boundary conditions in terms of s, when C4}
If $\mathcal{S}\in\mathscr{W}$ is in addition $C^4$-smooth, it is sufficient for $s/\sin^{n+2}\theta \in \dsa$, that both {\upshape(1)} and {\upshape(2)} of Theorem \ref{thm: main thm 1} hold.
\end{Prop}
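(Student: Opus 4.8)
The plan is to reduce to the preceding proposition, which identifies the membership $s/\sin^{n+2}\theta\in\dsa$ with the conjunction of conditions (I) and (II) of Theorem \ref{thm: main thm 2}, and then to show that under the added hypothesis that $\mathcal{S}$ is $C^4$-smooth, condition (1) of Theorem \ref{thm: main thm 1} forces (I) while condition (2) forces (II). So the argument splits into two short implications, each an exercise in the asymptotics of the astigmatism $s$ at the poles $\theta=0,\pi$. Throughout I use three facts supplied by the hypotheses: $C^4$-smoothness of $\mathcal{S}$ makes the radii of curvature, hence $s=r_2-r_1$, of class $C^2$ on $[0,\pi]$; one has $s(0)=s(\pi)=0$; and rotational symmetry makes $s$ an even function of $\theta$ about each pole, so in particular $s'(0)=s'(\pi)=0$. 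Recall also that $n$ lies in the range $(-1,1)$ in which $\mathscr{L}^n_n$ is limit-circle.

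For (I): the requirement $\dv{s}{\theta}\in\text{AC}_{\text{loc}}(0,\pi)$ is immediate from $s\in C^2(0,\pi)$, and the requirement $\mathscr{L}^n_n(s/\sin^{n+2}\theta)\in\lsin$ is condition (1) verbatim; so it remains only to verify $s/\sin^{n+2}\theta\in\lsin$. Here I would invoke the limit (\ref{eq: limit of s/sin from l'hopital}) established earlier, together with its reflection at $\pi$: they give that $s/\sin^2\theta$ has finite limits $\tfrac12 s''(0)$ and $\tfrac12 s''(\pi)$ as $\theta\to 0,\pi$. Hence $s/\sin^{n+2}\theta=O(\sin^{-n}\theta)$ near each pole, and since $\int_0^{\varepsilon}(\sin^{-n}\theta)^2\sin\theta\,d\theta\asymp\int_0^{\varepsilon}\theta^{1-2n}\,d\theta$ converges exactly when $n<1$, weighted square-integrability near the poles is automatic in the admissible range; as the function is continuous on every compact subinterval of $(0,\pi)$, we get $s/\sin^{n+2}\theta\in\lsin$, so (I) holds.

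For (II): I would expand the boundary expression as
\[
\sin^{2n+1}\theta\,\dv[]{}{\theta}\!\left(\frac{s}{\sin^{2n+2}\theta}\right)=\frac{s'}{\sin\theta}-(2n+2)\frac{s\cos\theta}{\sin^2\theta}
\]
and pass to the limit term by term at $\theta=0$ (the case $\theta=\pi$ being identical by reflection). Using $s'(0)=0$ together with $s\in C^2$, a second-order Taylor expansion of $s$ (equivalently, L'H\^opital) shows the first term tends to $s''(0)$ and the second to $(2n+2)\cdot\tfrac12 s''(0)$; hence the whole expression tends to $\big(1-(n+1)\big)s''(0)=-n\,s''(0)=-2n\lim_{\theta\to 0}\big(s/\sin^2\theta\big)$, which vanishes precisely because of condition (2). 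The same computation at $\pi$ completes (II), and with it the proposition. I expect this last limit to be the crux of the matter: the non-obvious input is that $s'$ vanishes at the poles --- a consequence of rotational symmetry, not of convexity or of the $C^4$ bound by itself --- and once that is in hand the limit collapses to an exact multiple of $n\lim(s/\sin^2\theta)$, so the mild hypothesis (2) is precisely what is needed (and one sees incidentally why no pole condition beyond regularity is required when $n=0$).
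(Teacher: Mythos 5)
Your proposal is correct and follows essentially the same route as the paper: both arguments use the L'H\^opital limit (\ref{eq: limit of s/sin from l'hopital}) to get $s/\sin^{n+2}\theta=O(\sin^{-n}\theta)$ and hence membership in $\lsin$ for $n<1$ (giving (I) from (1)), and both expand the boundary term of (II) as $\frac{s'}{\sin\theta}-2(n+1)\cos\theta\,\frac{s}{\sin^{2}\theta}$ and evaluate it to a multiple of $n\lim(s/\sin^{2}\theta)$, which is exactly condition (2). Your explicit remark that $s'(0)=s'(\pi)=0$ by evenness of $s$ at the poles makes visible a step the paper leaves implicit, but the substance is identical.
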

\begin{proof}
We will prove that under the assumption $\mathcal{S}$ is $C^4$, (1) implies (I) and (2) implies (II). Firstly since $\mathcal{S}\in\mathscr{W}$ and is $C^4$, we have $\dv{s}{\theta}\in \text{AC}_{\text{loc}}(0,\pi)$. Also, by arguing as in Proposition (\ref{prop: If C^4 then surface has umbilic slope gre 3 }), $s$ has the following asymptotic behaviours near the boundary of $(0,\pi)$
\[
\frac{s}{\sin^{n+2}\theta} \sim \frac{1}{2\sin^n \theta}\left. \dv[2]{s}{\theta}\right|_{\theta=0} \text{ as } \theta \to 0 \quad\text{ and }\quad \frac{s}{\sin^{n+2}\theta} \sim \frac{1}{2\sin^n \theta}\left. \dv[2]{s}{\theta}\right|_{\theta=\pi} \text{ as } \theta \to \pi.
\]
We state the following easy Lemma:
\begin{Lem}\label{lem: L2 sufficent boundary growth}
A continuous function $f$ on $(0,\pi)$ satisfying the growth conditions
\[
f \sim \frac{k_0}{\sin^p \theta} \text{ as } \theta \to 0 \quad \text{ and }\quad f \sim \frac{k_\pi}{\sin^p \theta} \text{ as } \theta \to \pi,  \qquad p<1,
\]
for constants $k_0$ and $k_\pi$ is square integrable with weight $\sin\theta$, i.e. $f\in \lsin$.
\end{Lem}

Therefore, since $n<1$, lemma \ref{lem: L2 sufficent boundary growth} gives $s/\sin^{n+2}\theta \in \lsin$. Hence the only requirement left for $s$ to satisfy (I) is that $\mathscr{L}^n_n\left(s/\sin^{n+2}\theta\right)\in \lsin$ which is (1).

Secondly, we re-write condition (II) as
\[
\lim\limits_{\theta \to 0,\pi}\left(\frac{\dv{s}{\theta}}{\sin \theta}-2(n+1)\cos\theta \frac{s}{\sin^2\theta}\right)=0.
\]
Substituting the limits in Proposition \ref{eq: limit of s/sin from l'hopital} in to the above gives condition (II) as the pair of equations
\begin{align*}
    &n \cdot \lim_{\theta \to 0}\left(\frac{s}{\sin^2\theta}\right)=0, & n \cdot \lim_{\theta \to \pi}\left(\frac{s}{\sin^2\theta}\right)=0,
\end{align*}
which is (2).
\end{proof}
\vspace{0.1in}

\begin{Prop}
Let $\mathcal{S}\in\mathscr{W}$ be $C^4$ and satisfy {\upshape(3)}. Then both {\upshape{(1)}} and {\upshape{(2)}} of Theorem \ref{thm: main thm 1} hold.
\end{Prop}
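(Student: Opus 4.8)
The plan is to get condition (2) straight from the fall-off hypothesis (3), and to obtain condition (1) by writing $\mathscr{L}^n_n\bigl(s_0/\sin^{n+2}\theta\bigr)$ explicitly in terms of $s_0,s_0',s_0''$ and then inserting the asymptotics of (3).

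For (2): the $i=0$ case of (3) is $s_0\sim c_0\sin^m\theta$ at each pole, so $s_0/\sin^2\theta\sim c_0\sin^{m-2}\theta$. Since $n\in(-1,1)$ we have $m>n+3>2$, hence $\sin^{m-2}\theta\to0$ as $\theta\to0,\pi$, and therefore $\lim_{\theta\to0,\pi}\bigl(s_0/\sin^2\theta\bigr)=0$, which gives (2) a fortiori.

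For (1): put $u=s_0/\sin^{n+2}\theta=\sin^n\theta\cdot v$ with $v=s_0/\sin^{2n+2}\theta$. Since $\sin^n\theta\in\ker\mathscr{L}^n_n$, a short computation yields the factorisation
\[
\mathscr{L}^n_n u=\frac{1}{\sin^{n+1}\theta}\,\dv[]{}{\theta}\!\left(\sin^{2n+1}\theta\,\dv{v}{\theta}\right),
\]
and carrying out the two differentiations produces an expression all of whose terms are products of one of $s_0,s_0',s_0''$ with a bounded trigonometric factor and a power $\sin^{-p}\theta$ with $p\le n+4$. On compact subsets of $(0,\pi)$ this is continuous — $\mathcal{S}\in C^4$ forces $s_0\in C^2$ — hence square-integrable against $\sin\theta$ there, so only the two endpoints are at issue. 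Substituting $s_0^{(i)}\sim c_i\sin^{m-i}\theta$, $i=0,1,2$, from (3), every term at each pole is $O(\sin^{m-n-4}\theta)$ (the term coming from the undifferentiated $s_0$ is in fact smaller), so $\bigl|\mathscr{L}^n_n u\bigr|^2\sin\theta=O(\sin^{2m-2n-7}\theta)$ near $\theta=0$ and near $\theta=\pi$. Since $\int_0^\varepsilon\sin^{2m-2n-7}\theta\,d\theta<\infty$ precisely when $2m-2n-7>-1$, i.e.\ $m>n+3$ — exactly the inequality imposed in (3) — we conclude $\mathscr{L}^n_n\bigl(s_0/\sin^{n+2}\theta\bigr)\in\lsin$, which is (1).

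The only real work is the bookkeeping in the derivative computation, together with the check that the most singular contribution at each pole is of order exactly $\sin^{m-n-4}\theta$, so that the $L^2_{\sin\theta}$ integrability threshold lines up precisely with the bound $m>n+3$ built into hypothesis (3); everything else is routine. (As a side remark, the factored form above also makes transparent why (3) is compatible with the boundary condition defining $\dsa$, since then $\sin^{2n+1}\theta\,v'=s_0'/\sin\theta-(2n+2)s_0\cos\theta/\sin^2\theta=O(\sin^{m-2}\theta)\to0$.)
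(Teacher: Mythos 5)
Your proof is correct and takes essentially the same route as the paper: both reduce the problem to the explicit expression for $\mathscr{L}^n_n\bigl(s_0/\sin^{n+2}\theta\bigr)$ in terms of $s_0,s_0',s_0''$ (your factorisation through $\sin^n\theta\in\ker\mathscr{L}^n_n$ reproduces exactly the paper's three-term formula), then check that the resulting fall-off $O(\sin^{m-n-4}\theta)$ at the poles meets the $\lsin$ integrability threshold precisely when $m>n+3$, with condition (2) following at once from $m>n+3>2$. One tiny quibble: the undifferentiated $s_0$ contributes through a $\sin^{-(n+4)}\theta$ factor and is therefore also of order $\sin^{m-n-4}\theta$, not smaller as your parenthesis claims --- but since you only use the uniform bound $O(\sin^{m-n-4}\theta)$, nothing in the argument is affected.
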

\begin{proof}
First we show that (3) implies (1). By lemma \ref{lem: L2 sufficent boundary growth}, it is enough to show that (3) implies
\[
\mathscr{L}^n_n \left(\frac{s}{\sin^{n+2}\theta}\right) \sim \frac{k}{\sin^p \theta},
\]
around $\theta=0$ for some $p<1$ and some $k$, and likewise at $\theta=\pi$. We have

\begin{align*}
\mathscr{L}^n_n\left(\frac{s}{\sin^{n+2}\theta}\right)&=\frac{1}{\sin^{n+2}\theta}\dv[2]{s}{\theta}- \frac{(2n+3)\cos\theta }{\sin^{n+3}\theta}\dv{s}{\theta}+ \frac{2(n+1)(1+\cos^2\theta)s}{\sin^{n+4}\theta}.
\end{align*}
It is easily shown that if $s$ and its derivatives have the asymptotic behaviour given by (3), then by Lemma \ref{lem: L2 sufficent boundary growth} the asymptotic fall off of $\mathscr{L}^n_n \left(s/\sin^{n+2}\theta \right)$ is sufficient for $\mathscr{L}^n_n \left(s/\sin^{n+2}\theta \right)\in \lsin$. To prove (2) we have

\[
\frac{s}{\sin^2\theta}=\frac{s}{\sin^m\theta}\cdot \sin^{m-2}\theta \to 0 \enskip \text{ as } \theta\to 0,\pi,
\]
since $m>n+3>2$ for $n\in(-1,1)$.
\end{proof}
\vspace{0.1in}

\subsection{The Eigenbasis Expansion}
\vspace{0.1in}

\begin{Prop}\label{prop: u decomposition}
If $u\in\lsin$ and $n\in(-1,1)$, then $u$ can be decomposed in $\lsin$ as
\begin{equation} \label{eq: u decomposition}
u=\gamma_{0,n}\sin^{n}\theta+\sum\limits_{m=1}^\infty \gamma_{m,n} \tp^{-n}_{n+m}(\cos \theta).
\end{equation}
\end{Prop}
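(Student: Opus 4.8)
The plan is to identify the decomposition \eqref{eq: u decomposition} as the eigenfunction expansion supplied by the Spectral Theorem \ref{thm: spectral theorem} applied to $\mathscr{L}^n_n$ on $\lsin$ with the self-adjoint domain $\dsa$ of Proposition \ref{prop: Self-Adjoint Domain for L^n_n}. Since $n\in(-1,1)$, Proposition \ref{prop: L^n_n is LC} gives that $\mathscr{L}^n_n$ is LC, so by Proposition \ref{prop: compact res, simple eigval} it has compact resolvent, and $\mathscr{L}^n_n|_{\dsa}$ is self-adjoint. Theorem \ref{thm: spectral theorem} then produces a complete orthonormal basis $(e_k)_k$ of eigenfunctions in $\dsa$; hence \emph{any} $u\in\lsin$ expands as $u=\sum_k\langle u,e_k\rangle_{\sin\theta}\,e_k$, convergent in $\lsin$. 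It remains only to identify the $e_k$ explicitly.

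\textbf{Step 1: solve the eigenvalue ODE.} I would solve $\mathscr{L}^n_n y=\lambda y$ in Sturm--Liouville form \eqref{eq: Legendre Operator}. Setting $x=\cos\theta$ turns this into the associated Legendre equation with order $\mu=\pm n$ and degree parameter $\nu$ where $\nu(\nu+1)=\nu_0(\nu_0+1)$; a basis of solutions is $\tp^{\pm n}_{\nu}(\cos\theta)$. For generic $\lambda$ both independent solutions behave like $\sin^{\pm n}\theta$ near the endpoints, and the square-integrability computation is exactly the one in the proof of Proposition \ref{prop: L^n_n is LC}: $\sin^n\theta\in\lsin$ always, while $\sin^{-n}\theta\in\lsin$ iff $|n|<1$. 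The boundary condition \eqref{eq: seperated boundary conditions for dsa.} reads $\lim_{\theta\to0,\pi}\sin^{2n+1}\theta\,\dv{}{\theta}(y/\sin^n\theta)=0$; writing $y=\sin^n\theta\cdot v$, the solution component behaving like the constant $v$ satisfies this (its derivative vanishes), while the component with $v\sim\sin^{-2n}\theta$ contributes a nonzero term $\sim -2n\cos\theta$ and is excluded when $n\neq0$. So the admissible eigenfunctions are precisely the Legendre functions $\tp^{-n}_{\nu}(\cos\theta)$ that remain bounded-after-normalisation at both poles, i.e. those whose degree makes them terminate/regularize appropriately.

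\textbf{Step 2: pin down the eigenvalues and index the basis.} The requirement that $\tp^{-n}_{\nu}(\cos\theta)$ satisfy \eqref{eq: seperated boundary conditions for dsa.} at both endpoints quantizes $\nu$; the relevant family is $\nu=n+m$, $m=0,1,2,\dots$, with $\tp^{-n}_{n}(\cos\theta)=\mathrm{const}\cdot\sin^n\theta$ recovering the Hopf stationary solution (Remark \ref{rem: hopf spheres are stationary}), which is why it appears as the $m=0$ term written separately. These are mutually orthogonal in $\lsin$ by self-adjointness (distinct eigenvalues $\lambda_m=(n+m)(n+m+1)-n(n+1)-\ldots$, in any case distinct), and by the Spectral Theorem they are \emph{complete}. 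Normalising, one obtains the claimed expansion with coefficients $\gamma_{m,n}$ proportional to $\langle u,\tp^{-n}_{n+m}(\cos\theta)\rangle_{\sin\theta}$ divided by the squared norm.

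\textbf{Main obstacle.} The genuinely delicate point is \emph{completeness}: verifying that the Legendre functions $\{\tp^{-n}_{n+m}(\cos\theta)\}_{m\geq0}$ are \emph{all} the eigenfunctions of $\mathscr{L}^n_n|_{\dsa}$ — that no eigenfunctions are missed and that the boundary condition at one endpoint does not secretly force a second condition incompatible with $\nu=n+m$. This is handled by invoking Theorem \ref{thm: spectral theorem} abstractly (the basis it provides is automatically complete), and then matching: since the ODE is second order, for each eigenvalue the solution space is two-dimensional, the $\lsin$ condition cuts it to (at most) the span of the $\sin^n\theta$-type solution near each end, and \eqref{eq: seperated boundary conditions for dsa.} further selects within that; a dimension count shows each admissible eigenvalue is simple and corresponds to exactly one $\tp^{-n}_{n+m}$. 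Thus the abstract basis must coincide with this explicit list, which gives \eqref{eq: u decomposition}. A secondary technical nuisance is the special case $n=0$, where $\sin^{-n}\theta$ and the log-solution degenerate; there the $m=0$ term is the constant and the argument goes through with the second solution $\ln\cot(\theta/2)$ excluded by \eqref{eq: seperated boundary conditions for dsa.} exactly as in Proposition \ref{prop: L^n_n is LC}.
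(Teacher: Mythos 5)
Your proposal is correct and follows essentially the same route as the paper: invoke Propositions \ref{prop: L^n_n is LC} and \ref{prop: compact res, simple eigval} together with the Spectral Theorem \ref{thm: spectral theorem} to get an abstract complete orthonormal eigenbasis of $\mathscr{L}^n_n|_{\dsa}$, then use the boundary condition (\ref{eq: seperated boundary conditions for dsa.}) at $\theta=0$ to force the order $\mu=-n$ (discarding $\tp^{n}_\nu$ and $\tq_\nu$) and at $\theta=\pi$ to quantize the degree to $\nu=n+m$. The only step you leave as an assertion ("the degree makes them terminate/regularize appropriately") is where the paper does its real work, namely computing the asymptotics of $\tp^{-n}_\nu(\cos\theta)$ near $\theta=\pi$ via the hypergeometric connection formula and reading off that the offending $\sin^{-n}\theta$ component vanishes exactly when $1/\Gamma(n-\nu)=1/\Gamma(n-\nu-1)=0$.
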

\begin{proof}
Take $\mathscr{L}^n_n$ to be the Legendre operator with the self-adjoint domain $\dsa$ given by Proposition \ref{prop: Self-Adjoint Domain for L^n_n}. By Propositions \ref{prop: compact res, simple eigval} and \ref{prop: L^n_n is LC} and the Spectral Theorem \ref{thm: spectral theorem}, $\lsin$ has a complete orthonormal basis consisting of $\mathscr{L}^n_n$'s eigenfunctions and kernel. The kernel and eigenspaces of $\mathscr{L}^n_n$ are spanned by the following linearly independent pairs of basis functions:
\begin{center}
\begin{tabular}{ |c|c|c| } 
  \hline
  &$n=0$&$0<|n|<1$\\
  \hline
 Kernel & $\big\{\tq_0( \cos \theta),1\big\}$ & $\big\{\tp^{n}_n( \cos \theta),\sin^n\theta\big\}$ \\ 
  \hline
  Eigenspaces& $\big\{\tq_{\nu}(\cos \theta),\tp_{\nu}( \cos \theta)\big\}$ & $\big\{\tp^n_{\nu}(\cos \theta),\tp^{-n}_{\nu}( \cos \theta)\big\}$ \\
  \hline
\end{tabular}
\end{center}
We will show that $\{\tp^{-n}_{n+m}(\cos\theta)\}_{m=0}^\infty$ are the only functions from the above list belonging to $\dsa$, i.e. the only functions that satisfy the boundary conditions (\ref{eq: seperated boundary conditions for dsa.}). The function $\sin^n\theta$ and the constant function $1$ are easily seen to satisfy the boundary conditions. For the other functions, we use the derivative formula \cite[p362]{olver10}
\[
 \sin\theta\dv[]{\tp^\mu_\nu(\cos \theta)}{\theta}=(1-\mu+\nu)\tp^\mu_{\nu+1}(\cos \theta)-(\nu+1)\cos\theta \tp^\mu_\nu(\cos \theta), \enskip \forall \mu,\nu\in\mathbb{R},
\]
to write the boundary condition (\ref{eq: seperated boundary conditions for dsa.}) with $u=\tp^\mu_\nu(\cos \theta)$ as
\begin{align} \label{eq: boundary condition in terms of P or Q}
\sin^{2n+1}\theta \dv[]{}{\theta}\left(\frac{\tp^\mu_\nu(\cos \theta)}{\sin^n \theta}\right)=(1-\mu+\nu)\sin^n\theta \tp^\mu_{\nu+1}(\cos\theta)-(1+n+\nu)\cos\theta\sin^n\theta \tp^\mu_\nu(\cos\theta).
\end{align}

At $\theta=0$, the asymptotic behavior of $\tp^\mu_\nu(\cos \theta)$ is given by \cite[p361]{olver10}
\begin{equation}\label{eq: asmptotic for P around 0}
\tp^\mu_\nu(\cos\theta) \sim \frac{1}{\Gamma(1-\mu)}\left(\frac{2}{\sin \theta}\right)^\mu, \enskip \mu\neq1,2,3,\ldots
\end{equation}
It follows from equation (\ref{eq: boundary condition in terms of P or Q}) that near $\theta=0$ the asymptotic behavior of the boundary term is
\[
\sin^{2n+1}\theta \dv[]{}{\theta}\left(\frac{\tp^\mu_\nu(\cos \theta)}{\sin^n \theta}\right)\sim -\frac{2^\mu(n+\mu)}{\Gamma(1-\mu)}.
\]
 Hence $\tp^{\mu}_{\nu}(\cos \theta)\in \dsa$ only if $\mu=-n$. A similar argument shows that $Q_\nu(\cos \theta)$ must be rejected from the basis because of it's bad asymptotic behaviour around $\theta=0$ (c.f. \cite[p361]{olver10}). Hence the remaining eigenfunctions are
\begin{center}
\begin{tabular}{ |c|c|c| } 
  \hline
  &$n=0$&$0<|n|<1$\\
  \hline
 Kernel & $1$ & $\sin^n\theta$ \\ 
  \hline
  Eigenspaces & $\tp_{\nu}( \cos \theta)$ & $\tp^{-n}_{\nu}( \cos \theta)$ \\
  \hline
\end{tabular}
\end{center}

We now use the boundary condition at $\theta=\pi$ to determine the possible values of $\nu$. We consider the case $n\neq0$ first. In this case using together the hyper-geometric series representation for $\tp^{-n}_\nu(\cos\theta)$ \cite[p353]{olver10}
\begin{equation}
\tp_{\nu}^{-n}(\cos\theta)={\textstyle{\frac{1}{\Gamma(1+n)}}}\left(\frac{1+\cos\theta}{1-\cos\theta}\right)^{-n / 2} {_2F_1}\left(\nu+1,-\nu ; 1+n ; \frac{1}{2}-\frac{1}{2} \cos\theta\right),
\end{equation}
and the linear transformation \cite[p390]{olver10}
\begin{align*}
{ }_{2} F_{1}(a, b ; c, z)=& {\textstyle{\frac{\Gamma(c) \Gamma(c-a-b)}{\Gamma(c-a) \Gamma(c-b)}}} {_2F_1}(a, b ; a+b+1-c ; 1-z) \\
&+{\textstyle{\frac{\Gamma(c) \Gamma(a+b-c)}{\Gamma(a) \Gamma(b)}}}(1-z)^{c-a-b}{ }_{2} F_{1}(c-a, c-b ; 1+c-a-b ; 1-z),
\end{align*}
one gets the asymptotic behavior of $\tp^{-n}_\nu(\cos\theta)$ around $\theta=\pi$:
\begin{equation}\label{eq: asymptotic for P around pi}
    \tp_{\nu}^{-n}(\cos \theta) \sim \frac{2^{n}\Gamma(n) }{\Gamma(n-\nu) \Gamma(1+n+\nu)} \sin ^{-n} \theta+\frac{\Gamma(-n)}{2^{n} \Gamma(1+\nu) \Gamma(-\nu)} \sin ^{n} \theta.
\end{equation}
We remark that we have $1/\Gamma(z)=0$ whenever $z$ is a negative integer. The asymptotic behavior of the boundary term around $\theta=\pi$ then follows from equations (\ref{eq: boundary condition in terms of P or Q}) and (\ref{eq: asymptotic for P around pi}):
\[
\sin^{2n+1}\theta \dv[]{}{\theta}\left(\frac{\tp^{-n}_\nu(\cos \theta)}{\sin^n \theta}\right)\sim \frac{2^n(1+\nu+n)\Gamma(n)}{\Gamma(\nu+n+2)\Gamma(n-\nu-1)}+\frac{2^n(1+\nu+n)\Gamma(n)}{\Gamma(\nu+n+1)\Gamma(n-\nu)}.
\]
To satisfy the boundary condition we require the above terms to be $0$ which is only the case when $\nu=n+m$ for $m\in\mathbb{N}\cup\{0\}$, so that $1/\Gamma(n-\nu-1)=1/\Gamma(n-\nu)=0$. The $n=0$ case is again similar, we derive the asymptotic behavior of $\tp_\nu(\cos\theta)$ around $\theta=\pi$ and find that $\nu \in \mathbb{N}$ in order for the boundary term to vanish. The family $\{\tp^{-n}_{n+m}(\cos\theta)\}_{m=0}^\infty$ are thus the only family of eigenfunctions in $\dsa$ and form an orthogonal basis of $\lsin$, therefore if $u\in \lsin$
\[
u=\gamma_{0,n}\sin^{n}\theta+\sum\limits_{m=1}^\infty \gamma_{m,n} \tp^{-n}_{n+m}(\cos \theta),
\]
for expansion coefficients $\{\gamma_{m,n}\}_{m=0}^\infty$.
\end{proof}
\vspace{0.1in}

For well-behaved surfaces, Proposition \ref{prop: u decomposition} gives an expansion of the surfaces astigmatism in terms of the Legendre functions.
\vspace{0.1in}

\begin{Thm}\label{thm: decomposition of geometric quantities}
Let $\mathcal{S}$ be a $C^2$-smooth rotationally symmetric, strictly convex sphere with astigmatism satisfying $s/\sin^{n+2}\theta\in \lsin$. If $n\in(-1,1)$, the following geometric quantities associated to $\mathcal{S}$ decompose as:
\begin{equation*} \label{eq: shear decomposition}
s=\gamma_{0,n}\sin^{2n+2}\theta+\sin^{n+2}\theta \sum\limits_{m=1}^\infty \gamma_{n,m} \tp^{-n}_{n+m}(\cos \theta),
\end{equation*}
\begin{equation*}\label{eq: r1 decomposition}
r_1=C_1+\frac{\gamma_{0,n}}{2(n+1)}\sin^{2n+2}\theta+\sin^{n+2}\theta\sum\limits_{m=1}^\infty \gamma_{m,n} \left\{\tp^{-(n+2)}_{n+m}(\cos \theta)+\cot \theta\tp^{-(n+1)}_{n+m}(\cos \theta)\right\},
\end{equation*}
\begin{equation*}\label{eq: support function decomposition}
r=C_2\cos\theta+C_1+\gamma_{0,n}\left[\frac{\sin^{2n+2}\theta}{2(n+1)}-\cos\theta\int^\theta_0\sin^{2n+1}\theta d\theta\right]+\sin^{n+2}\theta\sum\limits_{m=1}^\infty \gamma_{m,n} \tp^{-(n+2)}_{n+m}(\cos \theta),
\end{equation*}
where $C_1=r_1(0)$, $C_2=r(0)-r_1(0)$ and the first equality is to be understood as an equality in $\lsin$ where as the last two are point-wise.
\end{Thm}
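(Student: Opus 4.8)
The first expansion is immediate from Proposition~\ref{prop: u decomposition}; for the other two the plan is to integrate it termwise against the quadrature formulas of Section~\ref{ch: Rotationally Symmetric Surfaces}. Since $s/\sin^{n+2}\theta\in\lsin$ and $n\in(-1,1)$, Proposition~\ref{prop: u decomposition} gives $s/\sin^{n+2}\theta=\gamma_{0,n}\sin^n\theta+\sum_{m\ge1}\gamma_{m,n}\tp^{-n}_{n+m}(\cos\theta)$ as an identity in $\lsin$; as $\sin^{n+2}\theta$ is a bounded, hence continuous, multiplier on $\lsin$ for $n>-2$, multiplying through preserves the $L^2$-convergence and, using $\sin^{n+2}\theta\cdot\sin^n\theta=\sin^{2n+2}\theta$, gives the stated series for $s$.

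For $r_1$ and $r$, set $u=s/\sin^{n+2}\theta$ and recall from equation~(\ref{eq: curvatures in terms of r}) that $r_1=r+\dv{r}{\theta}\cot\theta$ on $(0,\pi)$, so that $\dv{r_1}{\theta}=s\cot\theta$ and $\dv{r}{\theta}=\sin\theta\int\tfrac{s}{\sin\theta}\,d\theta$ up to the additive constants of equation~(\ref{eq: support function from quadratures}). I would first check that these quadratures converge under the standing hypothesis $u\in\lsin$ alone: writing $\tfrac{|s|}{\sin\theta}=|u|\,\sin^{n}\theta\cdot\sin\theta$ and applying Cauchy--Schwarz in $\lsin$ bounds $\int_0^\theta\tfrac{|s|}{\sin\theta'}\,d\theta'$ by $\|u\|_{\lsin}\bigl(\int_0^\theta\sin^{2n+1}\theta'\,d\theta'\bigr)^{1/2}$, which is finite precisely because $2n+1>-1$; the same estimate controls $\int_0^\theta|s\cot\theta'|\,d\theta'$, so (using $r_1\in C^0[0,\pi]$) $r_1(\theta)=r_1(0)+\int_0^\theta s\cot\theta'\,d\theta'$ and the inner integral in the formula for $r$ converges absolutely with limit $0$ as $\theta\to0,\pi$.

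The interchange of sum and integral is handled by a standard device: each relevant iterated integral is an $\lsin$-inner product of $u$ against an explicit kernel supported on $(0,\theta)$. With $g_\theta(\theta')=\sin^n\theta'\cos\theta'\,\mathbf{1}_{(0,\theta)}(\theta')$ one has $\int_0^\theta s\cot\theta'\,d\theta'=\langle u,g_\theta\rangle_{\sin\theta}$, and performing the outer integration in $r$ produces the kernel $K_\theta(\theta')=\sin^n\theta'(\cos\theta'-\cos\theta)\,\mathbf{1}_{(0,\theta)}(\theta')$, so that $\int_0^\theta\sin\theta'\bigl(\int_0^{\theta'}\tfrac{s}{\sin\theta''}\,d\theta''\bigr)d\theta'=\langle u,K_\theta\rangle_{\sin\theta}$ after a Fubini interchange. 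Both $g_\theta$ and $K_\theta$ lie in $\lsin$ exactly when $n>-1$ (the integrand controlling the norm is $O(\sin^{2n+1}\theta')$ near the endpoints), so by continuity of the inner product and the $\lsin$-expansion of $u$ the series may be integrated term by term. This reduces the problem to the primitives $\int_0^\theta\sin^{n+1}\theta'\,\tp^{-n}_{n+m}(\cos\theta')\,d\theta'$ and $\int_0^\theta\sin^{n+1}\theta'\cos\theta'\,\tp^{-n}_{n+m}(\cos\theta')\,d\theta'$, together with $\int_0^\theta\sin^{2n+1}\theta'\cos\theta'\,d\theta'=\tfrac{\sin^{2n+2}\theta}{2(n+1)}$, which already accounts for the bracketed $\gamma_{0,n}$-terms.

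The two Legendre primitives are evaluated from the derivative formula for $\tp^\mu_\nu(\cos\theta)$ quoted in the proof of Proposition~\ref{prop: u decomposition} together with the contiguous relation $\sin\theta\,\tp^{\mu+1}_\nu(\cos\theta)=(\nu-\mu+1)\,\tp^\mu_{\nu+1}(\cos\theta)-(\nu+\mu+1)\cos\theta\,\tp^\mu_\nu(\cos\theta)$ (which follows by combining a ladder relation with the three-term recurrence for $\tp^\mu_\nu$ in \cite[\S14.10]{olver10}). Applying these for $\mu=-(n+1)$ and $\mu=-(n+2)$, one verifies directly that
\begin{align*}
\dv[]{}{\theta}\left[\sin^{n+1}\theta\,\tp^{-(n+1)}_{n+m}(\cos\theta)\right]&=\sin^{n+1}\theta\,\tp^{-n}_{n+m}(\cos\theta),\\
\dv[]{}{\theta}\left[\sin^{n+2}\theta\,\tp^{-(n+2)}_{n+m}(\cos\theta)+\sin^{n+1}\theta\cos\theta\,\tp^{-(n+1)}_{n+m}(\cos\theta)\right]&=\sin^{n+1}\theta\cos\theta\,\tp^{-n}_{n+m}(\cos\theta),
\end{align*}
while the asymptotics $\tp^\mu_\nu(\cos\theta)\sim\Gamma(1-\mu)^{-1}(2/\sin\theta)^\mu$ of equation~(\ref{eq: asmptotic for P around 0}) show that the bracketed expressions, and also $\sin^{n+1}\theta\,\tp^{-(n+1)}_{n+m}(\cos\theta)$, vanish as $\theta\to0$ (e.g.\ $\sin^{n+2}\theta\,\tp^{-(n+2)}_{n+m}(\cos\theta)=O(\sin^{2n+4}\theta)$). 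Substituting the resulting primitives into $\langle u,g_\theta\rangle_{\sin\theta}$ and $\langle u,K_\theta\rangle_{\sin\theta}$ gives the series for $r_1$ and $r$ up to two additive constants, which are pinned down by letting $\theta\to0$ (every $\theta$-dependent term then vanishes) and using $r_1=r+\dv{r}{\theta}\cot\theta$ with $\dv{r}{\theta}\to0$ there, forcing $C_1=r_1(0)$ and $C_2=r(0)-r_1(0)$. I expect the main difficulty to lie in the last two steps taken together: the inner-product device makes the termwise integration clean, but one must check carefully that $g_\theta,K_\theta\in\lsin$ — which is exactly where $n\in(-1,1)$ enters — and then nail the contiguous relation with the correct coefficients, since the whole identification of the primitives rests on it.
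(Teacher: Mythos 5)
Your proposal is correct and follows essentially the same route as the paper: multiply the expansion of $u=s/\sin^{n+2}\theta$ from Proposition \ref{prop: u decomposition} by $\sin^{n+2}\theta$, then integrate term by term through the Codazzi--Mainardi and quadrature formulas, identifying the Legendre primitives $\int\sin^{\mu+1}\theta\,\tp^{-\mu}_{\nu}(\cos\theta)\,d\theta=\sin^{\mu+1}\theta\,\tp^{-(\mu+1)}_{\nu}(\cos\theta)$ (which the paper cites directly from \cite{olver10} rather than rederiving via the contiguous relation) and discarding the boundary terms at $\theta=0$ by the asymptotics (\ref{eq: asmptotic for P around 0}). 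Your inner-product/kernel device for justifying the termwise integration, and the Cauchy--Schwarz check that the quadratures converge under $u\in\lsin$ alone, are refinements the paper leaves implicit but do not change the argument.
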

\begin{proof}
The expansion for $s$ follows from Proposition \ref{prop: u decomposition} by letting $s=\sin^{n+2}\theta \cdot u$. The expansion for $r_1$ can be derived by inserting the expansion of $s$ into the integrated derived Codazzi-Mainardi equation (\ref{eq: codazzi mainardi}):
\begin{align*}
r_1(\theta)&=r_1(0)+\int^\theta_0 \frac{s \cos\theta}{\sin\theta} d\theta\\
&=r_1(0)+\frac{\gamma_{0,n}}{2(n+1)}\sin^{2n+2}\theta+\sum\limits_{m=1}^\infty \gamma_{m,n}\int^\theta_0\cos \theta\sin^{n+1}\theta\tp^{-n}_{n+m}(\cos \theta)d\theta.
\end{align*}
We then use the standard integral \cite[p368)]{olver10}
\[
 \int\sin^{\mu+1}\theta\cdot  \tp^{-\mu}_{\nu}(\cos \theta)d \theta=\sin^{\mu+1}\theta\cdot  \tp^{-(\mu+1)}_{\nu}(\cos \theta) \enskip \forall \mu,\nu \in \mathbb{R},
\]
to evaluate the following by parts:
\[
\int^\theta_0\cos \theta\sin^{n+1}\theta\tp^{-n}_{n+m}(\cos \theta)d\theta=\sin^{n+2}\theta\tp^{-(n+2)}_{n+m}(\cos \theta)+\cos\theta\sin^{n+1}\theta\tp^{-(n+1)}_{n+m}(\cos \theta),
\]
where the boundary term at $\theta=0$ vanishes because of the asymptotic behaviour of $\tp^{-(n+1)}_{n+m}$, see equation (\ref{eq: asmptotic for P around 0}). This completes the derivation for $r_1$, to derive the expansion of $r$ one may either insert the astigmatism decomposition into equation (\ref{eq: support function from quadratures}) and proceed as above, or we can integrate the decomposition of $r_1$ by virtue of equation (\ref{eq: curvatures in terms of r}). The calculation is analogous.
\end{proof}
\vspace{0.1in}

For a given rotationally symmetric surface $\mathcal{S}$ with astigmatism $s$, it is intriguing to ask in what way does the expansion of $s$ given in Theorem \ref{thm: decomposition of geometric quantities} convey geometric information about $\mathcal{S}$?
\vspace{0.1in}

\begin{Cor}
Let $\gamma_{m,n}$ be the expansion coefficients as given in Theorem \ref{thm: decomposition of geometric quantities} for $n\in(-1,1)$. We have the following equalities
\begin{align*}
&\gamma_{0,n}=\frac{\Gamma(n+\frac{3}{2})}{\sqrt{\pi}\Gamma(n+1)}\cdot (f_0-f_\pi), & \gamma_{1,n}=\frac{\Gamma(2n+4)}{2^{n+1}\Gamma(n+1)}\cdot (r_1(\pi)-r_1(0)).
\end{align*}
hence $\gamma_{0,n}$ characterises the distance between the focal points of $\mathcal{S}$ while $\gamma_{1,n}$ characterises the differences between the radii of curvature at each pole.
\end{Cor}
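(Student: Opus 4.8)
The plan is to extract $\gamma_{0,n}$ and $\gamma_{1,n}$ by orthogonality of the eigenbasis produced in Proposition \ref{prop: u decomposition}, and then to recognise the resulting $\lsin$-inner products as boundary data of $\mathcal{S}$. Write $u=s/\sin^{n+2}\theta$, so that $u=\gamma_{0,n}\sin^{n}\theta+\sum_{m\ge 1}\gamma_{m,n}\tp^{-n}_{n+m}(\cos\theta)$ with respect to the orthogonal system $\{\sin^n\theta\}\cup\{\tp^{-n}_{n+m}(\cos\theta)\}_{m\ge 1}$. Since any fixed $g\in\lsin$ defines a continuous functional, pairing $u$ against a well-chosen $g$ collapses the series to a single coefficient, so the whole computation reduces to (i) identifying one inner product with a geometric invariant and (ii) one elementary Beta-type integral.

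For $\gamma_{0,n}$ I would pair against $\sin^n\theta$ itself, so $\langle u,\sin^n\theta\rangle_{\sin\theta}=\gamma_{0,n}\,\|\sin^n\theta\|_{\sin\theta}^2$. The left-hand side equals $\int_0^\pi (s/\sin\theta)\,d\theta$, which I claim is the signed focal distance $f_0-f_\pi$: by Remark \ref{rem: focal point positions}, $f_0-f_\pi=(r(0)+r(\pi))-(r_1(0)+r_1(\pi))$, and integrating $\tfrac{dx^3}{d\theta}=-\sin\theta\,r_2$ (which follows from the formula for $x^3$ and (\ref{eq: curvatures in terms of r})) together with the Codazzi--Mainardi relation (\ref{eq: codazzi mainardi}) over $(0,\pi)$, with a couple of integrations by parts, telescopes the right-hand side to $\int_0^\pi (s/\sin\theta)\,d\theta$. (Alternatively, evaluate the $r$-decomposition of Theorem \ref{thm: decomposition of geometric quantities} at $\theta=0,\pi$: the $m\ge 2$ terms vanish, and the $m=1$ contributions to $r(\pi)$ and $r_1(\pi)$ cancel in $f_\pi=r_1(\pi)-r(\pi)$.) Finally $\|\sin^n\theta\|_{\sin\theta}^2=\int_0^\pi\sin^{2n+1}\theta\,d\theta=\sqrt\pi\,\Gamma(n+1)/\Gamma(n+\tfrac32)$, and dividing gives the first identity.

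For $\gamma_{1,n}$ I would pair against $\sin^n\theta\cos\theta$, which lies in $\lsin$ for $n\in(-1,1)$. On the one hand, by the integrated Codazzi--Mainardi relation (\ref{eq: codazzi mainardi}), $\langle u,\sin^n\theta\cos\theta\rangle_{\sin\theta}=\int_0^\pi s\cot\theta\,d\theta=r_1(\pi)-r_1(0)$. On the other hand, expanding $u$ termwise: the $m=0$ contribution vanishes because $\langle\sin^n\theta,\sin^n\theta\cos\theta\rangle_{\sin\theta}=\int_0^\pi\sin^{2n+1}\theta\cos\theta\,d\theta=0$, and for $m\ge 1$ one meets $\int_0^\pi\sin^{n+1}\theta\cos\theta\,\tp^{-n}_{n+m}(\cos\theta)\,d\theta$, whose antiderivative $\sin^{n+2}\theta\,\tp^{-(n+2)}_{n+m}(\cos\theta)+\cos\theta\sin^{n+1}\theta\,\tp^{-(n+1)}_{n+m}(\cos\theta)$ was already computed in the proof of Theorem \ref{thm: decomposition of geometric quantities}; it vanishes at $\theta=0$ by (\ref{eq: asmptotic for P around 0}), and at $\theta=\pi$ it vanishes for every $m\ge 2$ while for $m=1$ it equals the nonzero limit $\lim_{\theta\to\pi}\sin^{n+2}\theta\,\tp^{-(n+2)}_{n+1}(\cos\theta)$, read off from the connection-formula asymptotic (\ref{eq: asymptotic for P around pi}). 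Hence $r_1(\pi)-r_1(0)=\gamma_{1,n}\cdot\lim_{\theta\to\pi}\sin^{n+2}\theta\,\tp^{-(n+2)}_{n+1}(\cos\theta)$; solving for $\gamma_{1,n}$ and simplifying the Gamma factors via $\Gamma(2n+4)=(2n+3)(2n+2)\Gamma(2n+2)$ and $\Gamma(n+2)=(n+1)\Gamma(n+1)$ yields the second identity. The stated geometric reading then follows at once: $\gamma_{0,n}$ vanishes exactly when the focal points coincide, and $\gamma_{1,n}$ vanishes exactly when $r_1(0)=r_1(\pi)$.

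I expect the main obstacle to be the behaviour at $\theta=\pi$ of the auxiliary Legendre functions $\tp^{-(n+1)}_{n+m}(\cos\theta)$ and $\tp^{-(n+2)}_{n+m}(\cos\theta)$ in the antiderivative above: unlike the $\theta\to 0$ endpoint, controlled by the one-term asymptotic (\ref{eq: asmptotic for P around 0}), the $\theta\to\pi$ endpoint requires the two-term connection-formula asymptotic (\ref{eq: asymptotic for P around pi}), and the decisive point is that the coefficient of the singular $\sin^{-\mu}\theta$ branch carries a factor $1/\Gamma(\mu-\nu)$ which vanishes precisely when $\mu-\nu$ is a non-positive integer — this is exactly what makes every mode except $m=1$ decouple from the boundary data. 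A smaller point is to confirm that the termwise pairing is legitimate, which it is since $\sin^n\theta$ and $\sin^n\theta\cos\theta$ both lie in $\lsin$ and the expansion of $u$ converges there.
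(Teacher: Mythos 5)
Your proposal is correct and follows essentially the same route as the paper: $\gamma_{0,n}$ is extracted by pairing $u$ against $\sin^n\theta$ (i.e.\ integrating $s/\sin\theta$ over $(0,\pi)$) and identifying the result with $f_0-f_\pi$ via the Codazzi--Mainardi and support-function relations, while the $\gamma_{1,n}$ identity rests on exactly the antiderivative $\sin^{n+2}\theta\,\tp^{-(n+2)}_{n+m}+\cos\theta\sin^{n+1}\theta\,\tp^{-(n+1)}_{n+m}$ and the $1/\Gamma$ vanishing in the $\theta\to\pi$ asymptotics that the paper uses when it evaluates the $r_1$ expansion at $\theta=\pi$. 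Your pairing against $\sin^n\theta\cos\theta$ is just a repackaging of that same termwise integration, so the two arguments coincide in substance.
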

\begin{proof}
Starting with the expansion formula for $s$ in Theorem \ref{thm: decomposition of geometric quantities}, we divide by $\sin \theta$ and integrate to find the following expression satisfied by $\gamma_{0,n}$:
\[
\int^\pi_0\frac{s}{\sin\theta} d\theta=\gamma_{0,n}\int^\pi_0\sin^{2n+1}\theta d\theta,
\]
where the higher order terms in the expansion vanish via orthogonality. The integral on the left hand side can be evaluated by using sequentially, equation (\ref{eq: codazzi mainardi}), integration by parts, equation (\ref{eq: curvatures in terms of r}) and Remark \ref{rem: focal point positions}:
\begin{align*}
    \int^\pi_0\frac{s}{\sin\theta} d\theta &=f_0-f_\pi,
\end{align*}
which proves the claim for $\gamma_{0,n}$ once the remaining integral on the right hand side is calculated. The claim concerning $\gamma_{1,n}$ follows by letting $\theta = \pi$ in the expansion of $r_1$. All higher order terms are zero apart from the $m=1$ term (by equation (\ref{eq: asymptotic for P around pi})) and we have
\[
r_1(\pi)=r_1(0)+\left.\gamma_{1,n}\left\{\sin^{n+2}\theta \tp^{-(n+2)}_{n+1}(\cos \theta)+\sin^{n+1}\theta\cos \theta\tp^{-(n+1)}_{n+1}(\cos \theta)\right\}\right|_{\theta=\pi}.
\]
The $m=1$ term satisfies

\[
\sin^{n+2}\theta \tp^{-(n+2)}_{n+1}(\cos \theta)+\sin^{n+1}\theta\cos \theta\tp^{-(n+1)}_{n+1}(\cos \theta) \to \frac{\Gamma(n+1)2^{n+2}}{\Gamma(2n+4)},
\]
as $\theta\to\pi$ (again by equation (\ref{eq: asymptotic for P around pi})), which completes the proof.
\end{proof}
\vspace{0.1in}

\subsection{Time Evolution of the Eigenbasis Expansion}\hspace{1cm}\\

In this section we give a solution for each $n\in(-1,1)$ to the time evolution problem (\ref{eq: time evolution problem in lsin}) for all $t\geq0$ and for almost every $\theta \in [0,\pi]$. Furthermore this solution is unique in the sense that it is equal point-wise to a strong solution of (\ref{eq: time evolution problem in lsin}) whenever a strong solution exists, for all most all $\theta\in[0,\pi]$, for every $t \geq 0$.

\vspace{0.1in}

\begin{Prop}\label{prop: time dep decomposition of u}
Fix $n\in(-1,1)$ and let $\{e_m\}^\infty_{m=1}$ be the Legendre functions $\{\tp^{-n}_{n+m}(\cos\theta)\}^\infty_{m=0}$, normalised with respect to the $\lsin$ inner product, i.e.
\[
e_m=\frac{\tp^{-n}_{n+m}(\cos\theta)}{||\tp^{-n}_{n+m}(\cos\theta)||}.
\]
Let $u_0\in\dsa$ have the eigenbasis decomposition
\[
u_0=\sum^\infty_{m=0}a_m e_m.
\]
and $u:[0,\infty) \to \lsin$ be the mapping defined by the series
\begin{align*}
& u(t)=\sum^\infty_{m=0}A_m(t) e_m, & A_m(t)=a_m \exp \{-(2n+1+m)mb\; t\}.
\end{align*}
Then, for each $t\geq0$, $u(t)$ converges in $\lsin$, is an element of $\dsa$ and
\begin{align}\label{eq: ptwise u(t)}
 [u(t)](\theta)=\sum^\infty_{m=0}A_m(t) e_m(\theta),
\end{align}
point-wise for almost all $\theta \in(0,\pi)$ (with respect to the Lebesgue measure). 

Furthermore, $u(t)$ is the unique classical solution to equation (\ref{eq: time evolution problem in lsin}) for almost all $\theta \in (0,\pi)$, $t \in (0,\infty)$ with initial data $u_0$.

\end{Prop}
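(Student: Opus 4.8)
The plan is to recognise $u(t)$ as the heat semigroup $e^{tb\mathscr{L}^n_n}$ applied to $u_0$ and to read the assertions off from the Spectral Theorem~\ref{thm: spectral theorem} together with a few explicit decay and growth estimates. The first point to record is the eigenvalue: with $\lambda_m:=-m(m+2n+1)$ one has $\mathscr{L}^n_n e_m=\lambda_m e_m$, since $\tp^{-n}_{n+m}(\cos\theta)$ solves $\mathscr{L}^{-n}_{n+m}\tp^{-n}_{n+m}(\cos\theta)=0$ while, as operators, $\mathscr{L}^n_n=\mathscr{L}^{-n}_{n+m}-m(m+2n+1)$ for each $m$ (the orders agree because $(-n)^2=n^2$, and the degree terms differ by $n(n+1)-(n+m)(n+m+1)=-m(m+2n+1)$); hence $A_m(t)=a_m e^{tb\lambda_m}$. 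Because $n\in(-1,1)$ forces $m+2n+1>0$ for every $m\ge1$, all eigenvalues satisfy $\lambda_m\le0$, so $0<e^{tb\lambda_m}\le1$ for $t\ge0$ and, by Parseval, $\|u(t)\|^2_{\lsin}=\sum_{m\ge0}|a_m|^2e^{2tb\lambda_m}\le\|u_0\|^2_{\lsin}$; this is the convergence of the series in $\lsin$. For the membership $u(t)\in\dsa$ I would use part~(2) of Theorem~\ref{thm: spectral theorem}, i.e.\ check $(\lambda_mA_m(t))_m\in\ell^2$: at $t=0$ this is precisely $(\lambda_m a_m)_m\in\ell^2$, which holds since $u_0\in\dsa=\dom{\mathscr{L}^n_n}$; for $t>0$ one has $|\lambda_m|e^{tb\lambda_m}\le\sup_{x\ge0}xe^{-tbx}=(ebt)^{-1}<\infty$, so $\sum_m|\lambda_mA_m(t)|^2\le(ebt)^{-2}\|u_0\|^2_{\lsin}<\infty$. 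Thus $u(t)\in\dsa$ for all $t\ge0$.

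To upgrade to the pointwise statement, fix $t>0$ and use the classical Mehler--Heine / Darboux asymptotics of the Legendre functions: on each compact $K\subset(0,\pi)$ the quantities $\|e_m\|_{L^\infty(K)}$, $\|e_m'\|_{L^\infty(K)}$, $\|e_m''\|_{L^\infty(K)}$ grow at most polynomially in $m$ (the normaliser $\|\tp^{-n}_{n+m}\|_{\lsin}$ cancels the $m$-dependence of $\tp^{-n}_{n+m}(\cos\theta)$ on compacta, and each $\theta$-derivative costs one power of $m$). Since $|A_m(t)|\le\|u_0\|_{\lsin}\,e^{-\kappa tm^2}$ for some $\kappa>0$ and all large $m$, the series $\sum_m A_m(t)e_m(\theta)$ together with its termwise $\theta$- and $t$-derivatives converges absolutely and locally uniformly on $(0,\infty)\times(0,\pi)$; hence the sum is a function which is $C^\infty$ in $\theta\in(0,\pi)$, $C^1$ (in fact $C^\infty$) in $t\in(0,\infty)$, and, being the locally-uniform limit of the partial sums, coincides with the $\lsin$-element $u(t)$ at every $\theta\in(0,\pi)$. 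At $t=0$, \eqref{eq: ptwise u(t)} is simply the orthogonal expansion of $u_0$, valid in $\lsin$ and hence a.e.

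For the classical-solution claim, termwise differentiation in $t$ gives $\partial_t u=b\sum_m\lambda_mA_m(t)e_m$, while applying $\mathscr{L}^n_n$ termwise (legitimate since $\mathscr{L}^n_ne_m=\lambda_me_m$ and the twice-$\theta$-differentiated series converges locally uniformly) gives the same expression; hence $\partial_t u=b\mathscr{L}^n_n u$ holds pointwise on $(0,\infty)\times(0,\pi)$, the condition $u(t,\cdot)\in\dsa$ encodes the effective boundary condition of \eqref{eq: time evolution problem in lsin}, and $\|u(t)-u_0\|^2_{\lsin}=\sum_m|a_m|^2|e^{tb\lambda_m}-1|^2\to0$ as $t\to0^+$ (dominated convergence, using $|e^{tb\lambda_m}-1|\le1$) gives the initial datum $u(0)=u_0$; so $u$ is a classical solution. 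For uniqueness, any classical solution $v$ of \eqref{eq: time evolution problem in lsin} defines a $C^1$ curve $t\mapsto v(t)$ into $\lsin$ with values in $\dsa$, $\partial_t v=b\mathscr{L}^n_n v$ and $v(0)=u_0$; then $\frac{d}{dt}\|v(t)-u(t)\|^2_{\lsin}=2b\,\langle\mathscr{L}^n_n(v(t)-u(t)),\,v(t)-u(t)\rangle_{\sin\theta}\le0$, since $\mathscr{L}^n_n$ is self-adjoint on $\dsa$ (Proposition~\ref{prop: Self-Adjoint Domain for L^n_n}) with spectrum $\{\lambda_m\}\subset(-\infty,0]$, and as $v(0)-u(0)=0$ this forces $v(t)=u(t)$ in $\lsin$ for all $t$, i.e.\ pointwise for a.e.\ $\theta$.

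The $\lsin$-level assertions fall out of the Spectral Theorem almost for free, so I expect the real work to be the second step: obtaining honest uniform-in-$m$ bounds for $\tp^{-n}_{n+m}(\cos\theta)$ and its first two $\theta$-derivatives on compacta of $(0,\pi)$, together with the matching asymptotics of the normalisers $\|\tp^{-n}_{n+m}\|_{\lsin}$. One must also keep the $n=0$ and $0<|n|<1$ cases apart, and note that for $n\in(-1,0)$ the functions $e_m$ blow up at the poles, so one has only locally uniform (not globally uniform) convergence on $(0,\pi)$---which is exactly why the pointwise identity is asserted only almost everywhere.
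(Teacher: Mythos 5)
Your overall strategy is sound and genuinely different from the paper's in its two key steps. Where you propose to obtain pointwise convergence for $t>0$ from Darboux/Mehler--Heine asymptotics of $\tp^{-n}_{n+m}(\cos\theta)$ on compacta of $(0,\pi)$ together with the Gaussian-in-$m$ decay of $A_m(t)$, the paper instead applies the Rademacher--Menchov theorem to the coefficient sequence (using that $u_0\in\dsa$ gives $(\lambda_m a_m)_m\in\ell^2$ and hence $\sum_m(\log_2(m+1)a_m)^2<\infty$) to get almost-everywhere convergence for every $t\geq0$ at once, and then uses Fubini's theorem on termwise differentiation of monotone series to differentiate in $t$. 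Your route, if the uniform-on-compacta bounds for $e_m$, $e_m'$, $e_m''$ and the normalisers are actually established, buys more: locally uniform convergence of the series and of its differentiated versions, hence a solution that is genuinely smooth on $(0,\infty)\times(0,\pi)$ rather than one satisfying the equation only a.e.; the cost is that these Legendre asymptotics are exactly the ``real work'' you defer rather than carry out. Your uniqueness argument via the energy identity and nonpositivity of $\mathscr{L}^n_n$ on $\dsa$ also differs from the paper's (which projects onto each eigenfunction and quotes ODE uniqueness); both are acceptable. The eigenvalue computation, the Parseval bound, and the verification $u(t)\in\dsa$ via part (2) of the Spectral Theorem all match the paper.

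There is, however, one step that is wrong as written: at $t=0$ you assert that the expansion of $u_0$ is ``valid in $\lsin$ and hence a.e.'' Convergence in $L^2$ of an orthogonal series does not imply a.e.\ convergence of that series (only of a subsequence of its partial sums), and the proposition claims \eqref{eq: ptwise u(t)} for each $t\geq0$ \emph{including} $t=0$, where your Gaussian factor $e^{-\kappa t m^2}$ is unavailable. This is precisely the point the paper's Rademacher--Menchov argument is designed to handle. The gap is repairable with tools you already have in hand: since $u_0\in\dsa$ one has $(\lambda_m a_m)_m\in\ell^2$ with $|\lambda_m|\sim m^2$, so Cauchy--Schwarz gives $\sum_m |a_m|\,m^{p}<\infty$ for every $p<3/2$, which combined with your claimed polynomial (indeed $O(1)$) bounds for $\|e_m\|_{L^\infty(K)}$ on compacta yields absolute locally uniform convergence already at $t=0$; alternatively one can invoke Rademacher--Menchov directly as the paper does. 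But the implication you actually wrote is false and must be replaced by one of these arguments.
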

\begin{proof}
First we remark that $u(0)=u_0$. Take $t\in[0,\infty)$ and let ${\lambda_{m}=-(2n+1+m)m}$, which is the eigenvalue of $\mathscr{L}^n_n$ associated to $e_m$. Since $\lambda_{m}\leq0$ for all $m\in\mathbb{N}$, $n\in(-1,1)$ and $b>0$, we have that $A_m(t)$ is decreasing in $t$, and in particular $|A_m(t)|\leq |a_m|$. It follows that since $u_0\in \lsin$, by comparison $u(t)\in\lsin$. 

Furthermore, by the Spectral Theorem \ref{thm: spectral theorem}, $\dsa$ is characterised by those elements $x\in \lsin$ such that $(\lambda_n\langle x, e_n \rangle )_n \in \ell^2$, therefore since $u_0\in\dsa$,
\begin{align*}
     \sum^\infty_{m=0}|\lambda_{m} A_m(t)|^2 \leq \sum^\infty_{m=0}|\lambda_{m} a_m|^2<\infty,
\end{align*}
and $u(t)\in\dsa$. Therefore $u(t)$ satisfies the boundary conditions of equation (\ref{eq: time evolution problem in lsin}).

We now show $u(t)$ is a classical solution to equation (\ref{eq: time evolution problem in lsin}) a.e. First we show (\ref{eq: ptwise u(t)}) holds, to do so we invoke the following theorem.
\begin{Thm}[Rademacher-Menchov]\cite[p251]{kashin89}\\
Let $\{e_n\}_n$ be an orthonormal series with respect to $\lsin$. The series
\[
\sum^\infty_{n=1}b_n e_n(\theta),
\]
converges for almost all $\theta\in(0,\pi)$ if the sequence $(b_n)_n$ satisfies
\[
\sum^\infty_{n=0}(\log_2(n+1)b_n)^2 < \infty.
\]
\qed
\end{Thm}
To see that the sequence $(a_m)_m$ satisfies the requirement of Rademacher-Menchov, it is enough to notice that the sequence $(\lambda_m)_m$ grows quadratically with $m$, hence
\[
\sum^\infty_{m=M}(\log_2(m+1)a_m)^2 < \sum^\infty_{m=M}|\lambda_m a_m|^2<\infty
\]
for some $M\in\mathbb{N}$. Therefore by Rademacher-Menchov $\sum^\infty_{m=0}a_m e_m(\theta)$ converges a.e., and so must $\sum^\infty_{m=0}A_m(t) e_m(\theta)$ for all $t\geq0$. Now we have proved convergence, the equality in (\ref{eq: ptwise u(t)}) follows by uniqueness of limits, since the partial sums of $u(t)$ converge a.e. along some sub-sequence to $u(t)$.

Fix $\theta\in[0,\pi]$ such that the $u_0(\theta)$ is equal to the series given in equation (\ref{eq: ptwise u(t)}). By Fubini's Theorem on differentiation \cite[p527]{jones93}, since $A_m(t)$ is monotonic in $t$ for all $m\in\mathbb{N}$ we have from term by term differentiation
\[
\partial_t\big([u(t)](\theta)\big)=\sum^\infty_{m=0}\lambda_m A_m(t) e_m(\theta)
\]
a.e. $t\in[0,\infty)$ (with respect to the Lebesgue measure). However this means we can find arbitrarily small $t'$ such that the above series converges and therefore converges for all $t>t'$, i.e. we have 
\begin{align*}
&\partial_t\big([u(t)](\theta)\big)=\sum^\infty_{m=0}\lambda_m A_m(t) e_m(\theta)
&\forall t\in (0,\infty),\text{ a.e. }\theta \in(0,\pi).
\end{align*}
However since $u(t)\in\dsa$, we have $\mathscr{L}^n_nu(t)=\sum^\infty_{m=0}\lambda_mA_m(t)e_n$, and because the series $\sum^\infty_{m=0}\lambda_m A_m(t) e_m(\theta)$ converges a.e. with respect to $\theta$, we have
\begin{align*}
\left[\mathscr{L}^n_nu(t)\right](\theta)&=\sum^\infty_{m=0}\lambda_mA_m(t)e_m(\theta) && \forall t\in [0,\infty),\text{ a.e. }\theta \in(0,\pi),\\
&=\partial_t\big([u(t)](\theta)\big) && \forall t\in (0,\infty),\text{ a.e. }\theta \in(0,\pi).
\end{align*}
Hence $u(t)$ satisfies the desired time evolution equation.

Uniqueness follows from the theory of ODEs by projecting the time evolution equation into each eigenbasis.

\end{proof}

Making the substitution $u=s/\sin^{n+2}\theta$ turns the time evolution problem (\ref{eq: time evolution problem in lsin}) into 
\begin{align}\label{eq: time evolution problem for astig in Lsin}
    \begin{cases}
    \displaystyle{\pdv[]{}{t}\left(\frac{s}{\sin^{n+2}\theta}\right)=b \cdot \mathscr{L}^n_n \left(\frac{s}{\sin^{n+2}\theta}\right)}, & [0,\infty) \times [0,\pi]\\
    \displaystyle{\frac{s(t,\cdot)}{\sin^{n+2}(\cdot)}\in \dsa} & t\in [0,\infty)\\
  s=s_0 & \{t=0\} \times [0,\pi],
    \end{cases}
\end{align}
which describes the evolution of a surfaces astigmatism under the linear Hopf flow (\ref{eq: s/sin evolution}).
\vspace{0.1in}

\begin{Cor}
Let $\mathcal{S}_0 \in \mathscr{W}$ be an initial surface with an astigmatism $s_0$ satisfying $s_0/\sin^{n+2}\theta \in \dsa$. Under the linear Hopf flow (\ref{eq: curvature flow equation}) and (\ref{eq: quantised slope equ}) with $n\in(-1,1)$, for all $\theta\in[0,\pi]$, $t\geq0$, the following quantities evolve as
\begin{align*}
& s(t,\theta)=\gamma_{0,n}(t)\sin^{2n+2}\theta+\sum\limits_{m=1}^\infty \Gamma_{n,m}(t) \sin^{n+2}\theta \tp^{-n}_{n+m}(\cos \theta), & \big(\text{a.e. }\theta\in[0,\pi]\big),
\end{align*}

\begin{equation*}
r_1(t,\theta)=C_1(t)+\frac{\gamma_{0,n}\sin^{2n+2}\theta}{2n+2}+\sin^{n+2}\theta \sum\limits_{m=1}^\infty \Gamma_{n,m}(t) \left[\tp^{-(n+2)}_{n+m}(\cos \theta)+\cot \theta\tp^{-(n+1)}_{n+m}(\cos \theta)\right],
\end{equation*}

\begin{multline*}
r(t,\theta)=C_2(t)\cos\theta+C_1(t)+\gamma_{0,n}\left[\frac{\sin^{2n+2}\theta}{2n+2}-\cos\theta\int^\theta_0\sin^{2n+1}\theta d\theta\right]\\+\sin^{n+2}\theta\sum\limits_{m=1}^\infty \Gamma_{n,m}(t) \tp^{-(n+2)}_{n+m}(\cos \theta),
\end{multline*}
for all $\theta \in [0,\pi]$, where $
    \Gamma_{m,n}(t)=\gamma_{m,n}\exp\big\{-(2n+1+m)mb\cdot t\big\}$
and $\gamma_{m,n}$ are the decomposition coefficients of the initial surface $\mathcal{S}_0$ in the basis $\{\tp^{-n}_{n+m}(\cos\theta)\}^\infty_{m=0}$. Furthermore the constants $C_1(t)$ and $C_2(t)$ evolve as
\begin{align*}
    &C_1(t)=C_1(0)e^{-2(n+1)bt}+\frac{c\left(1-e^{-2(n+1)bt}\right)}{2(n+1)b}, &C_2=\text{ constant }.
\end{align*}
\end{Cor}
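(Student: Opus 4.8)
The plan is to bootstrap everything from Proposition~\ref{prop: time dep decomposition of u}. Since by hypothesis $s_0/\sin^{n+2}\theta\in\dsa$, I apply that proposition with $u_0=s_0/\sin^{n+2}\theta$, obtaining $u(t)=\sum_{m\ge0}A_m(t)e_m$ with $A_m(t)=a_m e^{b\lambda_m t}$, $\lambda_m=-(2n+1+m)m$, which converges in $\lsin$, lies in $\dsa$ for each $t$, equals $\sum_m A_m(t)e_m(\theta)$ pointwise for a.e.\ $\theta$, and is the unique classical solution of (\ref{eq: time evolution problem in lsin}). Multiplying through by $\sin^{n+2}\theta$ and recalling that the $m=0$ eigenfunction is a multiple of $\sin^n\theta$ with $\lambda_0=0$ gives at once the displayed series for $s(t,\theta)$, with $\Gamma_{n,m}(t)=\gamma_{n,m}e^{-(2n+1+m)mbt}$ and the $m=0$ (Hopf) term frozen, in accordance with Remark~\ref{rem: hopf spheres are stationary}.

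Next, for each fixed $t$ the function $s(t,\cdot)/\sin^{n+2}\theta\in\dsa\subset\lsin$, so I re-run verbatim the term-by-term integration carried out in the proof of Theorem~\ref{thm: decomposition of geometric quantities}, with $\gamma_{n,m}$ replaced by $\Gamma_{n,m}(t)$: integrating the $s$-series against the derived Codazzi-Mainardi relation (\ref{eq: codazzi mainardi}) produces the series for $r_1(t,\theta)$, and integrating it against the quadrature (\ref{eq: support function from quadratures}) produces the series for $r(t,\theta)$, using the same Legendre integral identities. Pointwise convergence of the new series follows from the exponential decay of the $\Gamma_{n,m}(t)$ together with the bounds on $\tp^{-(n+1)}_{n+m}$ and $\tp^{-(n+2)}_{n+m}$ implied by the asymptotics (\ref{eq: asmptotic for P around 0}) and (\ref{eq: asymptotic for P around pi}). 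Two time-dependent constants of integration arise, namely $C_1(t)=r_1(t,0)$ and $C_2(t)=r(t,0)-r_1(t,0)$.

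It remains to identify $C_1(t)$ and $C_2(t)$, and this is where the support-function evolution (\ref{eq: evolution of support function}), equivalent to the flow (\ref{eq: curvature flow equation}), is used: the above $r(t,\theta)$ is the unique solution of (\ref{eq: evolution of support function}) with the prescribed initial data, so I substitute it in. Writing the spatial operator there as $\mathcal D:=b\,\partial_\theta^2+a\cot\theta\,\partial_\theta+(a+b)$, a direct computation (factoring $\sin^{n+2}\theta$ out of Legendre's equation for $\tp^{-(n+2)}_{n+m}$) shows that $\sin^{n+2}\theta\,\tp^{-(n+2)}_{n+m}(\cos\theta)$ is an eigenfunction of $\mathcal D$ with eigenvalue $b\lambda_m$; since $\partial_t\Gamma_{n,m}(t)=b\lambda_m\Gamma_{n,m}(t)$, each $m\ge1$ term of the $r$-series satisfies (\ref{eq: evolution of support function}) by itself, and the $\gamma_{0,n}$ term (the $m=0$ case, $\lambda_0=0$) is annihilated by $\mathcal D$ once one uses $-a/b=2n+3$, i.e.\ $a+b=-2(n+1)b$, consistent with its being time-independent. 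As $\mathcal D(1)=a+b$ and $\mathcal D(\cos\theta)=0$, equation (\ref{eq: evolution of support function}) therefore collapses to $\dot C_1+\dot C_2\cos\theta=(a+b)C_1+c$, i.e.\ the decoupled system $\dot C_2=0$, $\dot C_1=-2(n+1)b\,C_1+c$, whose solution with data $C_1(0),C_2(0)$ is exactly the stated one; in particular $C_1(t)\to -c/(a+b)=c/(2(n+1)b)$, the radius of the limiting round sphere.

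The main obstacle is analytic rather than algebraic: one must justify interchanging the infinite sum with $\partial_t$ and with $\partial_\theta,\partial_\theta^2$ when substituting the $r$-series into (\ref{eq: evolution of support function}), and establish pointwise convergence of the $r_1$- and $r$-series in the first place. Both are handled with the tools already used in Proposition~\ref{prop: time dep decomposition of u} --- the monotone, exponentially decaying behaviour of the coefficients $\Gamma_{n,m}(t)$, Fubini's theorem on term-by-term differentiation, and the Rademacher--Menchov theorem --- together with the uniform control of the Legendre functions near $\theta=0,\pi$ afforded by (\ref{eq: asmptotic for P around 0}) and (\ref{eq: asymptotic for P around pi}).
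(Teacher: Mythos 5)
Your proposal is correct and follows essentially the same route as the paper: apply Proposition \ref{prop: time dep decomposition of u} to $u_0=s_0/\sin^{n+2}\theta$ for the $s$-series, integrate term by term as in Theorem \ref{thm: decomposition of geometric quantities} for $r_1$ and $r$, and then substitute into the support-function evolution (\ref{eq: evolution of support function}) to reduce to the ODEs $\dot C_2=0$, $\dot C_1=-2(n+1)bC_1+c$. The only (harmless) variation is in how the series terms are seen to drop out of that substitution: you verify directly from Legendre's equation that $\sin^{n+2}\theta\,\tp^{-(n+2)}_{n+m}(\cos\theta)$ is an eigenfunction of the spatial operator with eigenvalue $b\lambda_m$, whereas the paper substitutes all three expansions into the equation rewritten as $\partial_t r=c+b\left[s-2(n+1)r_1\right]$ and kills the resulting combination $\Delta_{m,n}$ of $\tp^{-n}_{n+m}$, $\tp^{-(n+1)}_{n+m}$, $\tp^{-(n+2)}_{n+m}$ via the standard three-term recurrence --- two equivalent ways of invoking the same Legendre identities.
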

\begin{proof}
Fix $n\in(-1,1)$ and let $s=\sin^{n+2}\theta\cdot u$. Expand $u$ as in Proposition \ref{prop: time dep decomposition of u}. The coefficients $\Gamma_{m,n}(t)$ are related to $A_m(t)$ by $A_m(t)=\Gamma_{n,m}(t) ||\tp^{-n}_{n+m}(\cos\theta)||$. This derives the expression for $s(t,\theta)$.
The expressions for $r_1(t,\theta)$ and $r(t,\theta)$ follow by integration as in the proof of Theorem \ref{thm: decomposition of geometric quantities}.

To derive the behaviour of $C_1(t)$ and $C_2(t)$, we ensure they satisfy equation (\ref{eq: evolution of support function}) for the linear Hopf flow. i.e.
\begin{align*}
\pdv[]{r(t,\theta)}{t}&=ar_1(t,\theta)+br_2(t,\theta)+c\\
&=c+b\left[s(t,\theta)-2(n+1)r_1(t,\theta)\right].
\end{align*}
where we have first written the right hand side of (\ref{eq: evolution of support function}) in terms of $r_1$ \& $r_2$, and then used the identity $-a/b=2n+3$. Substituting the expansions of $r(t,\theta)$, $r_1(t,\theta)$ and $s(t,\theta)$ into this equation and collecting together terms gives us the relationship
\begin{align*}
    \partial_tC_2(t)\cos\theta+\partial_tC_1(t)&=
    c-2(n+1)bC_1(t)+b\sin^{n+2}\theta \sum\limits_{m=1}^\infty \Gamma_{n,m}(t)\Delta_{m,n},
\end{align*}
where
\begin{align*}
    \Delta_{m,n}=\tp^{-n}_{n+m}(\cos \theta)-2(n+1)\cot \theta\tp^{-(n+1)}_{n+m}(\cos \theta)-(2n+2+m)(m-1)\tp^{-(n+2)}_{n+m}(\cos \theta).
\end{align*}

However, by letting $\mu=-(n+2)$ and $v=n+m$ in the following recurrence relation between the Legendre functions \cite[p362]{olver10}
\[
\mathrm{P}_{\nu}^{\mu+2}(x)+2(\mu+1) x\left(1-x^{2}\right)^{-1 / 2} \mathrm{P}_{\nu}^{\mu+1}(x)+(\nu-\mu)(\nu+\mu+1) \mathrm{P}_{\nu}^{\mu}(x)=0,
\]
we can see that $\Delta_{m,n}$ is identically $0$, hence we have the following evolution equation
\[
\partial_tC_2(t)\cos\theta+\partial_tC_1(t)=
    c-2(n+1)bC_1(t).
\]
It is easy to see that $\partial_tC_2(t)=0$. Solving the remaining ODE gives the stated time evolution of $C_1(t)$.
\end{proof}
\begin{remark}
We have the following asymptotic behaviour as $t\to \infty$, 
\[
r(t,\theta)\sim C_2\cos\theta +\frac{c}{2(n+1)b}+\gamma_{0,n}\left[\frac{\sin^{2n+2}\theta}{2n+2}-\cos\theta\int^\theta_0\sin^{2n+1}\theta d\theta\right].
\]
This is the astigmatism of a linear Hopf sphere. If $\gamma_{0,n}=0$, i.e. if the focal points of the initial surface coincide, then the astigmatism is that of a sphere with radius
\[
\frac{c}{2(n+1)b}=-\frac{c}{a+b},
\]
as claimed.
\end{remark}

\vspace{0.2in}

\noindent{\bf Statements and Declarations}:

The second author was supported by the Institute of Technology, Tralee / Munster Technological University Postgraduate Scholarship Programme. 
\vspace{0.2in}


\end{document}